\definecolor{darktangerine}{rgb}{1.0,0.66,0.07}
\definecolor{red}{rgb}{0.0,0.0,0.0}
\DeclareMathOperator{\Aut}{Aut}
\DeclareMathOperator{\Mat}{Mat}
\DeclareMathOperator{\Word}{\mathtt{Word}}
\DeclareMathOperator{\gl}{gl}
\DeclareMathOperator{\id}{id}
\DeclareMathOperator\Quot{Quot}
\DeclareMathOperator\sym{Sym}
\DeclareMathOperator{\Mon}{\mathtt{Mon}}
\DeclareMathOperator{\SU}{SU}
\DeclareMathOperator{\mdeg}{mdeg}
\DeclareMathOperator{\tdeg}{tdeg}
\def\sl{{\operatorname{sl}}}
\def\su{{\operatorname{su}}}
\def\so{{\operatorname{so}}}
\newcommand{\frc}[2]{\raisebox{2pt}{\text{\footnotesize #1}}/\raisebox{-3pt}{\text{\footnotesize #2}}}
\newcommand\ot{\frc{1\!}{\!2}}  
\numberwithin{equation}{section}
\newtheorem{theorem}{Theorem}
\newtheorem{lemma}{Lemma}[section]
\newtheorem{definition}[lemma]{Definition}
\newtheorem{corollary}[lemma]{Corollary}
\newtheorem{remark}[lemma]{Remark}
\newtheorem{denotations}[lemma]{Definition-Notation}
\newtheorem{proposition}[lemma]{Proposition}
\newtheorem{step}[lemma]{Step}
\newtheorem{assumption}[lemma]{Assumption}
\DeclareSymbolFont{greek}{U}{eur}{m}{n}
\DeclareSymbolFontAlphabet{\gr}{greek}
\DeclareMathSymbol{\UpR}{\mathord}{greek}{`R}
\DeclareMathSymbol{\glambda}{\mathord}{greek}{"15}
\DeclareMathSymbol{\vmu}{\mathord}{greek}{"16}
\DeclareMathSymbol{\vphi}{\mathord}{greek}{"1E}
\DeclareMathSymbol{\vpsi}{\mathord}{greek}{"20}
\DeclareMathSymbol{\vrho}{\mathord}{greek}{"1A}
\DeclareMathSymbol{\vsigma}{\mathord}{greek}{"1B}
\DeclareMathSymbol{\vtau}{\mathord}{greek}{"1C}
\DeclareMathSymbol{\gzeta}{\mathord}{greek}{"10}
\DeclareMathSymbol{\vtheta}{\mathord}{greek}{"12}
\DeclareMathSymbol{\gnu}{\mathord}{greek}{"17}
\DeclareMathSymbol{\uii}{\mathord}{greek}{"69}
\DeclareMathSymbol{\gF}{\mathord}{greek}{`F}
\DeclareMathSymbol{\bG}{\mathord}{greek}{`G}
\DeclareMathSymbol{\gK}{\mathord}{greek}{`K}
\DeclareMathSymbol{\gL}{\mathord}{greek}{`L}
\DeclareMathSymbol{\gM}{\mathord}{greek}{`M}
\DeclareMathSymbol{\ue}{\mathord}{greek}{`e}
\DeclareMathSymbol{\gp}{\mathord}{greek}{`p}
\DeclareMathSymbol{\gq}{\mathord}{greek}{`q}
\DeclareMathSymbol{\gb}{\mathord}{greek}{`b}
\DeclareMathSymbol{\vv}{\mathord}{greek}{`v}
\DeclareMathSymbol{\uu}{\mathord}{greek}{`u}
\DeclareMathSymbol{\ww}{\mathord}{greek}{`w}
\DeclareMathSymbol{\gS}{\mathord}{greek}{`S}
\def\gX{\cU^{\text{x}\,}}
\def\gY{\cU^{\text{y}\,}}
\def\gH{H}
\def\ux{X}
\def\uy{Y}
\def\uz{Z}
\def\bk{\Bbbk}
\def\bgM{\,\,\overline{\!\gM\!\!\!\!\phantom{|}}\,\,}
\def\LLt{\gL^{\!^{_\times}}}
\def\LL{\gL\,}
\def\bww{\overline{\ww\!}\,}
\newcommand\bS{\mathfrak{S}}\newcommand\op{\operatorname}
\newcommand\bC{\mathbb{C}}
\newcommand\bR{\mathbb{R}}
\newcommand\bZ{\mathbb{Z}}
\newcommand\Si{\Sigma}
\newcommand\Om{\Omega}
\newcommand\La{\Lambda}
\newcommand\cU{\mathcal{U}}
\newcommand\bL{\mathrm{L}}
\newcommand\fA{\mathfrak{A}}
\newcommand\Alg{\mathcal{A}\text{lg}\,}
\newcommand\ssi{\sigma}
\newcommand\ga{\gamma}
\newcommand\la{\lambda}
\newcommand\bbla{\bar{\lambda}}
\def\cC{\mathcal{C}}
\def\cH{\mathcal{H}}
\def\cX{\mathcal{X}}
\def\cY{\mathcal{Y}}
\def\cZ{\mathcal{Z}}
\def\cV{\mathcal{V}}
\def\ff{f}
\def\tki{{ki}}
\def\tkj{{kj}}
\def\tti{{2i}}
\def\ttj{{2\!j}}
\def\tmj{{mj}}
\def\ft{\mathfrak{t}}
\def\fg{\mathfrak{g}}
\def\Sm{\mathcal{S}}
\def\tt{\mathtt t}
\def\bt{\bar{\mathtt t}}
\def\muxx{\mu^{xx}}
\def\muxy{\mu^{xy}}
\def\muyx{\mu^{yx}}
\def\muyy{\mu^{yy}}
\def\J{J}
\def\Jp{\mathbb{J}_{\gp}}
\def\Jsp{\mathbb{J}_{\!\gp}}
\def\Jq{\mathbb{J}_{\gq}}
\def\Jsq{\mathbb{J}_{\!\gq}}
\def\rg{\UpR}
\def\brg{\bar{\UpR}}
\def\ttd{\mathrm{d}}
\newcommand\is{_{i}}
\newcommand\js{_{\!j}}
\newcommand\ks{_{k}}
\def\psub{_{(\!\gp)}}
\def\qsub{_{(\!\gq)}}
\def\un{\mathtt{1}}
\def\st{^{\!*}}
\def\mat{$\bullet$\,\,}
\def\vcup{\overset{\to}{\,\bigsqcup\,}}
\def\mo{^{\!-\!1}}
\def\le{{[\hspace{-0.15cm}[}\,}
\def\ra{{\;]\hspace{-0.15cm}]}\,}
\def\osum{\mathop\sum\limits}
\def\ocup{\mathop\cup\limits}
\def\obigoplus{\mathop\bigoplus\limits}
\def\bigocup{\mathop\bigcup\limits}
\def\oprod{\mathop\prod\limits}
\begin{document}

\author{Natalia Golovashchuk and Jo\~{a}o Schwarz}
\title[On rational twisted generalized Weyl algebra \ ] {On  rational twisted generalized Weyl algebra}

%

\begin{abstract}
The aim of this work is to investigate the structure of some skew twisted algebras,
when the coefficient ring is a localization of the polynomial ring over the field of characteristic zero, and an involution is provided.
A parallel construction of the rational twisted generalized Weyl algebras is given.
We propose a method and explicit formulas for a constructive description of these algebras
and their involution-symmetric invariant subalgebras based on the Gelfand-Zeitlin
realization of the universal enveloping algebra of some complex Lie algebras.
As concrete examples we discuss special unitary and orthogonal algebras of rank three.
\end{abstract}

\maketitle

\emph{Keywords}: twisted generalized Weyl construction, localization, lie algebras of small rank

Subjects: Rings and Algebras (math.RA); GWA Algebra (math.QA) 

2020  Mathematics  Subject Classification.  Primary  16S35; Secondary 16S85, 17B35

\tableofcontents

\section*{Preliminaries}

\smallskip~

The concept of twisted generalized Weyl algebras (TGWA), introduced by Mazorchuk and Turowska in \cite{MT-99}
and additionally studied in \cite{MT-99}, \cite{MPT}, \cite{H1}, \cite{FH-1}, \cite{HO}, \cite{HS}, appeared as a natural class of non-commutative polynomial algebras in some set of indeterminates over a commutative polynomial ring, and the multiplication acts with some additional factors from the coefficient ring.
In further work on this topic, the polynomial ring was replaced by some localization of the polynomial ring or its field of quotients, which naturally leads to the concept of rational algebra. Moving further in this direction, it is natural to extend the range of values of the basic parameters of the algebras under consideration to some rational extensions of the base coefficient ring, all the more since such an action turns out to be necessary from the point of view of the enveloping algebras of some classical Lie algebras in order to represent them as rational TGW algebras.

We introduce the concept of rational TGW algebra and propose a constructive description of the basis of universal enveloping algebra for some Lie algebras based on the Gelfand-Zeitlin formulas (in Zhelobenko form, see \cite{zh:cg}) given for the action of generators of semi simple Lie algebras on a Gelfand-Zeitlin basis of a finite dimensional  irreducible representations, see \cite{Z}, \cite{M}, \cite{Ser}, \cite{Maz-ort}.

Futorny and Ovsienko showed in \cite{FO1} that the Gelfand-Tsetlin formulas for $\gl_n$ define an embedding
of the corresponding  universal enveloping algebra into a skew group algebra of a free abelian group
over some field of rational functions. Besides, it was shown in \cite{FH} that the consistency relation in TGWA can be expressed as identities in certain localization of the coefficient ring. In this paper, we give a slightly different, but very close in spirit result, assuming that the basic parameters of the considered algebra belong to a localization of the coefficient ring.

We consider algebras that are some modification of TGW algebras in two ways.
We assume that the free variables commute up to some localization of the coefficient polynomial ring.
Moreover, these algebras are endowed with involution both on the base coefficient ring and on free noncommuting variables of the algebra.
The subgroup of symmetric group acts naturally on the determined algebra according to some given partition of the set of indeterminates; we investigate the invariant subalgebras.

More precisely, we define a natural class of $\gS$-rational twisted generalized Weyl algebras under the assumption that the commutative coefficient ring is a localization of the polynomial ring with respect to some multiplicatively closed subset $\gS$. We call such algebras rational or RTGW algebras.
We show that those algebras are constructed using the concept of skew twisted monoid algebra equipped with an involution,
and are some modifications of the well-studied twisted generalized Weyl algebras.
Furthermore, we show for concrete examples of special unitary and orthogonal Lie algebras of rank three
that there exists an injective homomorphism from the universal enveloping algebra $\op{U}(\fg)$
to the RTGW algebra $\fA$ that induces an injective homomorphism of the corresponding Lie algebras (Theorems \ref{the_su_RTGWA}, \ref{the_so_RTGWA}).

The structure of the paper is as follows.
First we give some constructions of skew twisted monoid algebras endowed with involution in full generality.
Then we narrow down this definition to the case of saturated, involuted, double localization of the coefficient ring,
and we construct in Section 2 a twisted generalized Weyl algebra over some localization of the polynomial main ring
and we study some basic facts about these algebras.

After that, in Section 3 we construct some examples of RTGW algebras, specifying directly their defining parameters,
and show, by direct computation, that some their invariant subalgebras, namely $\fA_\su$ and $\fA_\so$ are isomorphic to
universal embedding algebras of the special unitary and orthogonal Lie algebras of rank $3$, respectively.

\vspace{0.1cm}

In addition to its own interest, the usefulness of the construction is reflected in the following Theorems presented in the last section, which describe two specific applications of this theory.

\vspace{0.1cm}

\begin{theorem}\label{the_su_RTGWA}
Let $\fA=\Alg_{3,6}\big(\gS,\bG,\vsigma,\ft,\vmu\big)$ be the complex RTGW algebra that satisfies Definition \eqref{def_RTGWA_general}
and has the parameters defined by \eqref{def_tt_bt_uni}, \eqref{equ_mu_tt_dependence_uni}.
Then there exists an injective $\bC$-linear homomorphism
$$\vpsi:\op{U}(\su(3;\bC))\longrightarrow\fA^{\!\bG}$$
from the universal enveloping algebra of complex special unitary Lie algebra of rank $3$ to $\bG$-invariant subalgebra of $\fA$,
such that $\vpsi$ induces an isomorphism of the Lie algebras
$\vpsi:\su(3;\bC)\,\,\overset{\sim}{\longrightarrow}\,\,\fg_\su(3;\bC)$,
where $\fg_\su(3;\bC)$ is a Lie subalgebra of the Lie algebra of $\fA$,
and $\op{U}(\fg_\su(3;\bC))\subset\fA^{\!\bG}$ is an invariant subalgebra with respect to the group $\bG$ action.
\end{theorem}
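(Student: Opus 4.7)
The plan is to write down $\vpsi$ explicitly on a Chevalley-type generating set of $\su(3;\bC)$ using the Zhelobenko form of the Gelfand-Zeitlin formulas cited in the introduction, then to verify $\bG$-invariance of the images and the defining bracket relations, extend to $\op{U}$ by universal property, and finally establish injectivity. Concretely I would take as generators $e_1,e_2,f_1,f_2,h_1,h_2$ together with $e_3=[e_1,e_2]$, $f_3=[f_1,f_2]$ and the $*$-structure pinning down the compact real form. The Zhelobenko formulas express each raising operator $e_i$ as a sum of shift operators on a Gelfand-Zeitlin pattern weighted by rational functions in the row entries of the pattern; the lowering operators $f_i$ are analogous, and the $h_i$ act by multiplication by rational functions. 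Under the parameters fixed in \eqref{def_tt_bt_uni}--\eqref{equ_mu_tt_dependence_uni}, these raising and lowering shifts are realized by the noncommuting generators $\ux_j,\uy_j$ of $\fA=\Alg_{3,6}\big(\gS,\bG,\vsigma,\ft,\vmu\big)$ and the rational coefficients lie in $\gS^{\!-\!1}\bC[\rg]$, so the formula for $\vpsi$ is essentially read off from the Zhelobenko rules. The resulting elements are $\bG$-invariant because each classical Gelfand-Zeitlin coefficient is symmetric in the remaining entries of the corresponding row of the pattern, and this row-wise symmetry is exactly the $\bG$-action set up in Section 1.

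Next I would verify by direct computation that the proposed images satisfy the defining relations of $\su(3;\bC)$: the Cartan brackets $[h_i,e_j]=a_{ij}e_j$, $[h_i,f_j]=-a_{ij}f_j$, the Chevalley identity $[e_i,f_j]=\delta_{ij}h_i$, the Serre relations $(\op{ad}e_i)^{1-a_{ij}}(e_j)=0$ for $i\ne j$ (and the analogous identities for the $f_i$), and the compatibility with the involution. Each such identity, after unwinding the $\vmu$-skew commutations between the $\ux_i,\uy_j$ and the $\vsigma$-twist rule for moving coefficients past generators, reduces to an identity in $\gS^{\!-\!1}\bC[\rg]$; by construction the parameters $\vsigma,\ft,\vmu$ in \eqref{def_tt_bt_uni}--\eqref{equ_mu_tt_dependence_uni} are tuned exactly so that these rational identities hold. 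This shows that the restriction of $\vpsi$ to $\su(3;\bC)$ is a Lie algebra homomorphism into the commutator algebra of $\fA$, whose image is by definition $\fg_\su(3;\bC)$; the universal property of $\op{U}$ then extends $\vpsi$ uniquely to an associative algebra map $\op{U}(\su(3;\bC))\to\fA^{\!\bG}$.

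Injectivity is proved by a PBW/leading-term argument. Fix an ordering of the Chevalley generators and the corresponding PBW basis of $\op{U}(\su(3;\bC))$. The skew twisted monoid algebra underlying $\fA$ carries a natural $\Word$-grading by total degree in $\ux_i,\uy_j$; the image under $\vpsi$ of a PBW monomial of total length $n$ has leading $\Word$-component a nonzero rational linear combination of length-$n$ monomials in the generators, and distinct PBW basis elements produce leading terms supported on disjoint monomial sets. The absence of zero divisors on leading monomials established for the skew twisted monoid algebra in Section 2 then converts this into linear independence of the images by a standard triangular argument, giving injectivity of $\vpsi$. Restriction to the Lie-algebraic part then yields the Lie algebra isomorphism $\su(3;\bC)\overset{\sim}{\to}\fg_\su(3;\bC)$ asserted in the theorem.

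The main obstacle will be step two: the Serre identity $(\op{ad}e_i)^{2}(e_j)=0$ for $i\ne j$ and the Chevalley commutator $[e_i,f_j]=\delta_{ij}h_i$ expand into substantial cancellations among rational functions whose denominators are row-wise differences of the $\rg$-entries, and checking that the particular choice of $\vsigma,\ft,\vmu$ in \eqref{def_tt_bt_uni}--\eqref{equ_mu_tt_dependence_uni} realizes exactly these cancellations---rather than producing them up to some controlled correction---is the computational core of the theorem.
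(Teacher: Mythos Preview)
Your outline is essentially the paper's own argument, with two presentational differences worth noting. First, rather than verifying the Chevalley and Serre relations as you propose, the paper simply writes down the eight elements $\gX_i,\gY_i,\gH_i$ of \eqref{equ_def_sl_XY} and computes the entire $8\times 8$ commutator table by hand (Lemma~\ref{lem_Eij_Aij_correspond}); this is equivalent to your step two but bypasses the Serre cancellation you flag as the main obstacle, since in rank three one can check $[\gX_2,\gX_3]=0$, $[\gX_1,\gX_3]=0$ directly rather than expanding $(\op{ad}e_i)^2(e_j)$. Second, the paper factors through $\sl(3;\bC)$: it first matches the $\gX_i,\gY_i,\gH_i$ with the matrix units $E_{kl}$ via \eqref{equ_Eij_Aij_correspond}, and only afterwards passes to $\su(3;\bC)$ by taking the explicit Gell-Mann combinations $\cU_k,\cV_k,\cH_k$ of Definition~\ref{def_su_generators}, whose $\bG$-invariance is then immediate from \eqref{equ_action_involution_su}. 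Your direct route to the compact form is fine, but the detour through $\sl(3;\bC)$ makes the bracket check and the invariance check cleaner and separates them.

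On injectivity: the paper's argument is terse (``due to the simplicity''), whereas your PBW/leading-term argument is more explicit. Be aware, though, that the images $\gX_2=\ux_{21}+\ux_{22}$ and $\gY_2=\uy_{21}+\uy_{22}$ are \emph{not} homogeneous for the $\bZ^{3}$-grading of Proposition~\ref{prop_RTGWA_crossprod}, so the ``disjoint monomial support'' claim in your triangular argument needs care; you will need either to pass to the associated graded for a suitable filtration or to invoke the domain property of $\fA$ together with a GK-dimension or faithful-module argument (the Futorny--Ovsienko embedding cited in the introduction supplies one).
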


\vspace{0.1cm}

\begin{theorem}\label{the_so_RTGWA}
Let $\fA=\Alg_{2,4}\big(\gS,\bG,\vsigma,\ft,\vmu\big)$ be the complex RTGW algebra,
satisfying Definition \eqref{def_RTGWA_general}, and having the parameters given by definition \ref{def_rg_ort_formulae}.
Then there exists an injective $\bC$-linear homomorphism $$\vpsi:\op{U}(\so(3;\bC))\longrightarrow\fA^{\!\bG}$$
from the universal enveloping algebra of complex orthogonal Lie algebra of rank $3$ to to the invariant subalgebra of $\fA$,
which gives an isomorphism of Lie algebras $\vpsi:\so(3;\bC)\,\,\overset{\sim}{\longrightarrow}\,\,\fg_\so(3;\bC)$,
where $\fg_\so(3;\bC)$ is a subalgebra of the Lie algebra of $\fA$ defined in \eqref{formula_GZ-3-delta-full}, in addition $\op{U}(\fg_\so(3;\bC))\subset\fA^{\!\bG}$.
\end{theorem}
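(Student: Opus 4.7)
The plan is to mirror the proof of Theorem~\ref{the_su_RTGWA}, with the Zhelobenko-Gelfand-Zeitlin formulas for the orthogonal series (as in~\cite{Maz-ort}) replacing those of the unitary series. Concretely, I would first define $\vpsi$ on the Chevalley generators of $\so(3;\bC)$ by exhibiting each raising, lowering, and Cartan element as an explicit element of $\fA$: the root generators as $\gS^{\!-1}\bC[\ft]$-linear combinations of the free variables $\ux_i,\uy_i$ whose coefficients are the parameters $\vmu$ of Definition~\ref{def_rg_ort_formulae}, and the Cartan generator as an element of the coefficient ring obtained from $\ft$. Invariance under $\bG\subset\bS$ is built into the construction: the Zhelobenko recipe for the orthogonal case naturally produces $\bG$-symmetric combinations, so the images automatically lie in $\fA^{\!\bG}$.

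The technical core of the argument, and the step where I expect most of the effort to sit, is verifying that these images satisfy the defining Lie bracket relations of $\so(3;\bC)$. After expansion via the RTGW multiplication rule of Section~2, each bracket reduces to an identity among rational functions of $\ft$ under the $\vsigma$-action. The cross terms are to be collapsed by the consistency relations of the rational TGW algebra established in Section~2 (the rational analogues of~\cite{FH}). The orthogonal Zhelobenko coefficients carry non-trivial normalisations that must combine cleanly under the $\vsigma$-twisting in order for the brackets to close inside $\fg_\so(3;\bC)$ as prescribed by~\eqref{formula_GZ-3-delta-full}; the main obstacle will be organising these rational identities so that the numerator cancellations are transparent rather than accidental, and this is where the assumption that $\gS$ is a saturated involuted multiplicatively closed set becomes essential.

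Once the Lie relations hold, the universal property of $\op{U}$ extends $\vpsi$ uniquely to an algebra homomorphism $\op{U}(\so(3;\bC))\to\fA^{\!\bG}$, whose image equals $\op{U}(\fg_\so(3;\bC))$ by construction. For injectivity I would fix a PBW basis of $\op{U}(\so(3;\bC))$ and observe that, by the freeness of the skew twisted monoid algebra underlying $\fA$ proved in Section~1, the $\vpsi$-images of distinct PBW monomials are linearly independent when sorted by the natural grading of $\fA$ by the monoid generated by the $\ux_i,\uy_i$. Hence a non-zero element of $\op{U}(\so(3;\bC))$ has non-zero image, proving injectivity of $\vpsi$; restricting to the degree-one component yields the claimed Lie algebra isomorphism $\vpsi:\so(3;\bC)\overset{\sim}{\longrightarrow}\fg_\so(3;\bC)$.
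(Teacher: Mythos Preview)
Your high-level strategy---write down explicit images, verify the Lie brackets by direct expansion in $\fA$, then extend by the universal property---is exactly what the paper does, and in that sense the proposal is on target. Two points, however, separate your plan from the actual argument and one of them is a genuine mismatch with the statement you are asked to prove.

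First, the form of the generators. You propose to send the Cartan generator of $\so(3;\bC)$ to an element of the coefficient ring $\LL$, mirroring the $\su$ case where $\gH_1,\gH_2\in\La$. But the theorem pins down $\fg_\so(3;\bC)$ as the span of the specific elements $\cU_1,\cU_2,\cU_3$ of \eqref{formula_GZ-3-delta-full}, and \emph{none} of these lies in the coefficient ring: they satisfy the ``rotation'' relations $[\cU_1,\cU_2]=\cU_3$, $[\cU_3,\cU_1]=\cU_2$, $[\cU_3,\cU_2]=-\cU_1$ rather than a Chevalley-type triple. Moreover, the orthogonal Gelfand--Zeitlin recipe forces a genuinely new feature absent from the unitary case: $\cU_2=\ux_{21}+\uy_{21}+\cC$ and $\cU_3$ carry an additive correction term $\cC\in\LL$ (and $\la_{11}^{-1}\cC$) coming from the coefficient ring, and the bracket verification (the paper's Lemma~\ref{lem_so_generators}) hinges on these correction terms cancelling the leftover pieces in $[\cU_3,\cU_2]$. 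Your plan as written would miss this, so you should work directly with the $\cU_i$ of \eqref{formula_GZ-3-delta-full} rather than a Chevalley basis.

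Second, injectivity. The paper dispatches this in one line by invoking simplicity of $\so(3;\bC)$, which strictly gives injectivity only at the Lie-algebra level. Your PBW-plus-grading argument is more honest, but note that the $\cU_i$ are \emph{not} homogeneous for the $\bZ^2$-grading of $\fA$ (each is a sum of several bimonomials), so ``sorting by the natural grading'' requires a leading-term argument rather than a direct degree count; Corollary~\ref{cor_elements_of_A} together with the domain property of Proposition~\ref{prop_RTGWA_crossprod} is what makes this go through.
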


Therefore, the universal enveloping algebra of a special unitary or of a special orthogonal algebra of rank $3$ can also be realized
as an invariant subalgebra of a Lie subalgebra of RTGWA of the same rank.

\vspace{0.1cm}

\section{A general setting}

\vspace{0.1cm}

This section contains some basic material that will be used through this paper.

Let $\bZ_+$ be a set of all strictly positive integers.
The interval of integers $i$ with $a\leqslant i\leqslant b$ is denoted by $\le a;b \ra$ or by $[a;b]$.
The product of sets always means the Cartesian product and is denoted by $\times$.

The ground field $\bk$ is commutative of characteristic zero containing field of real numbers $\bR$.
In the specific examples of unitary and orthogonal algebras considered in the last section, we fix the field $\bk=\bC$.
Term \emph{ring} will always denote an associative unital $\bk$-ring that is an integral domain.

%

By involution of the $\bk$-ring $\bR$ we mean the self inverse $\bR$-automorphism,
acting on $\bk$ either identically or by complex conjugation in the case $\bk=\bC$.

\vspace{-0.1cm}


\vspace{0.1cm}

\subsection{Polynomial ring and shift automorphisms}

\smallskip~

We fix the positive integer $\gp\in\bZ_+$.
Assume, we are given by a $\gp$-tuple of indeterminates $\glambda=\glambda\psub=\langle \la\is \rangle_{i\in\,\Jp}$,\,
where $\Jsp= {\le{1;\gp}\ra}$ be an index set.
Let $\La=\bk[\,\glambda\,]$ denotes the polynomial ring on $\glambda$, $\Aut(\La)$ be the automorphism group,
$\Quot(\La)$ the quotient algebra of $\La$. We put $\un_\La=\un_\bk=\un$.

Let $\La$ be endowed with an involution $*$ associated with the $\gp$-tuple $\gzeta=\gzeta\psub=\langle \zeta\is \rangle_{i\in\Jp}$
with $\zeta\is\in\{0,1\}$,\, such that $\la\is^*=\zeta\is-\la\is$ for any $i\in\Jsp$.
We assume $\La$ to be a \emph{polynomial ring with involution} denoted by $\La{=}\bk[\,\glambda\psub]^{\gzeta}$
or simply by $\La{=}\bk[\,\glambda]$ if all parameters are fixed.
For convenience, in what follows, we introduce the notation: $\bbla\is:=\la\is^*$ for any $i\in\Jsp$.

\vspace{0.1cm}

We assume, there is a commutative shift group $\Sigma\subset\Aut_\bk(\La)$,
$\Sigma=\langle\, \ssi\is \rangle_{i\in\Jp}\simeq\bZ^{\otimes{\gp}}$
generated by the left one step shift automorphisms $\ssi\is\in\Aut_\bk(\La)$ with respect to $\la\is$, $i\in\Jsp$,
so that $\ssi\is(\la\is)=\la\is-1$, and $\ssi\is(\la\js)=\la\js$ if $j\ne i$.

\subsection{The involuted polynomial ring and localization}

\smallskip~

\begin{definition}\label{def_isd_mult_set}

The multiplicatively closed set $\gS$ belonging to the polynomial ring with involution $\La$ is called:

\emph{involuted} if $s^*\in\gS$ whenever $s\in\gS$;

\emph{saturated} if $\pm1\in\gS$, and if it is closed with respect to taking non-constant divisors of each of its elements;

\emph{double} if each indecomposable non-constant divisor of any $s\in\gS$ is a linear polynomial the form $\pm\la\is+c$, $\pm2\la\is+c$, $\pm\la\is\pm\la\js+c$, or $c\in\bZ$, $i,j\in\Jsp$
(up to factors belonging to $\bk$).
\end{definition}


\begin{definition}[of stable multiplicative set]\label{stable multiplicative set}
The set $\gS$ is called $*$,$\Si$-stable multiplicative set if $\ssi(s)\in\gS$ and $s^*\in\gS$ for any $s\in\gS$ and any $\ssi\in\Si$.

We say that $\gS$ is a \emph{multiplicative closure} of the subset $\Om\subset\gS$
if each element of $\gS$ is a finite product of the elements from $\Om\cup\{\pm1\}$;
we write in this case $\gS=\gS(\Om)$.

We say that $\gS$ is finitely generated by the subset $\Om_0\subset\gS$ if $|\Om_0|<\infty$ and $\gS=\gS(\Om_0^{\Si})$
where $\Om_0^{\Si}=\{\ssi(s) \mid s\in\Om_0,\, \ssi\in\Si\}\cup\{\pm1\}$.
\end{definition}

\vspace{0.1cm}

The localization of $\La$ at $\gS$ is denoted by $\bL=\gS^{\!-\!1}\La$.
We have $\Sigma\subset\Aut_\bk(\LL)$ with $\ssi(f/g)=\ssi(f)/{\ssi(g)}$.
The involution map $*:\La\to\La$ is extended to the map $*:\LL\to\LL$ with $(f/g)^*=f^*/{g^*}$, because $*:\gS\to\gS$. $\LLt\subset\Quot(\La)$ denotes the group of units of $\LL=\gS^{\!-\!1} \Lambda$.

We assume that the symmetric group $\sym_\gp$ acts on the sets of variables $\glambda\psub$ and $\zeta\psub$ by permutation of indices,
in what follows, we will assume that $\operatorname {Sym}_\gp$is a subgroup of $\Aut_\bk(\LL)$.

\vspace{0.1cm}

\begin{definition}\label{def_diskr_def}
Let $\La{=}\bk[\,\glambda\psub]^{\gzeta}$ be a polynomial ring with involution.
For $i,j{\in}\Jsp$, $i{\ne}j$, take
\begin{align*}
\begin{array}{lll}
\ttd\is &=& \la\is\bbla\is\,(\la\is-\bbla\is)(\bbla\is-\la\is);
\\
\ttd_{i,j} &=& (\la\is+\la\js)(\la\is+\bbla\js)(\bbla\is+\la\js)(\bbla\is+\bbla\js);
\\
\ttd &=& \oprod_{i\in\Jp}\ttd\is \,\cdot\! \oprod_{i,j\in\Jp,i{<}j} \ttd_{i,j}.
\end{array}
\end{align*}
The polynomial $\ttd\in\La$ is called the \emph{defining polynomial} for the localization of $\La$.
\end{definition}

\begin{remark}\label{rem_Om_prime_diskr}
Let $\Om_{0}$ be the set of all non scalar indecomposable dividers (of total degree $1$) of the defining polynomial $\ttd$,
and let $\gS_{\Om}=\gS(\Om_{0}^{\Si})$ be a multiplicative set  generated by the countable set $\Om_0^{\Si}$
(see definition \ref{stable multiplicative set}).
Then $\gS_{\Om}$ is the saturated, involuted, double multiplicative set.

Moreover,  $\gS_{\Om}$ coincides with the $\Si$-stable multiplicative closure of a finite set
$$\bigcup_{i\in\,\Jp} \big\{ \la\is, 2\la\is-1 \big\} \,\cup \,
\bigcup_{i,j\in\,\Jp;\, i<j} \big\{\la\is+\la\js,\,\, \la\is{-}\la\js \big\} \,\cup \, \{\pm1\}.$$
\end{remark}

\begin{proof}
Indeed, for $i\ne{j}$ we have:\; $\bbla\is=-\ssi\is^{\zeta\is}(\la\is)$, $\ssi\is(\la\js)=\la\js$, and therefore
$\la\is\pm\bbla\js=\ssi\js^{\zeta\js}(\la\is\mp\la\js)$,\;\; and
$(\bbla\is\pm\bbla\js)=-\ssi\is^{\zeta\is}\ssi\js^{\zeta\js}(\la\is\pm\la\js)$.
\end{proof}

\begin{definition}\label{def_rational involuted ring}
Having a polynomial ring with involution $\La=\bk[\,\glambda\psub]^{\gzeta}$ and the multiplicative set $\gS=\gS_{\Om}$,
we say that the localization $\gS^{{-}1}\!\La$ is linear, involuted and double,
and the ring $\LL=\gS^{{-}1}\!\La$ is called a \emph{rational involuted ring}.
\end{definition}




\subsection{Storey partition}

\smallskip~
\smallskip~

\begin{definition}[storey partition $\&$ partial localization]\label{def_storey_partition}
Let $\gp,\gq\in\bZ_+$, $0{<}\gq{\leqslant}{\gp}$.
The decomposition of the index set
\begin{align}\begin{split}\label{equ_storey_partition}
\Jsp={\le{1;\gp}\ra}=\Jq\cup{J}_{\!r{+}1},\;\; {J}_{\!r{+}1}={\le{\gq{+}1,\gp}\ra},
\\
\Jsq={\le{1;\gq}\ra}= {{J}_{\!1}} \cup\ldots\cup\,{{J}_{r}}
={\le{1;\gq_1}\ra}\cup\, {\le{\gq_1{+}1;\gq_2}\ra} \cup\ldots\cup\,{\le{\gq_{r{-}1}{+}1,\gq_r}\ra},\;\;\; \gq_r=\gq,
\end{split}\end{align}
is called the $\gp,\gq$ \emph{storey partition} (of length $r{+}1$).

\vspace{0.1cm}

Given the storey partition $\Jsp= {{J}_{\!1}} \cup\ldots\cup\,{{J}_{r{+}1}}$ we denote by $\vec{\ttd}$ the product of all
$\ttd_{i}$, $i\in\Jsp$, and all $\ttd_{i,j}$, $i\in J_k$, $j\in{J}_m$ with $|k-m|\leqslant{1}$, $1\leqslant{k,m}\leqslant{r{+}1}$.
Then polynomial $\vec{\ttd}$ divides polynomial $\ttd$, and $\vec{\ttd}=\ttd$ for the case $r=0,1$.

Consider a subset $\Om'_{0}\subset\Om_{0}$ containing all dividers of $\vec{\ttd}$.
The corresponding  multiplicative closure $\gS'=\gS(\Si(\Om'_{0}))$ is a multiplicative set corresponding
the storey partition of $\Jsp$, it is saturated, involuted and double.
In this case the set $\gS'=\gS(\Si(\Om'_{0}))$ is called the partial multiplicative set
and the localization $\bL=(\gS')^{\!-\!1}\La$ is called a \emph{partial localization}.
The map $*$ is naturally extended to an involution $*\in\Aut(\LL)$ and satisfies the condition $*:\LLt\to\LLt$.
\end{definition}

\vspace{0.1cm}

For any set $I=\le{a;b}\ra$ we denote by  $\sym_I$ the symmetric group on the set of indices $I$. For a ring $\La{=}\bk[\,\glambda\psub]^{\gzeta}$, a subgroup $\bS\subset\sym_\gp$
is called compatible with storey partition \eqref{equ_storey_partition} if
$\zeta_i=\zeta_j$ for $i,j\in J_k$,\; $1\leqslant{k}\leqslant{r{+}1}$;\;
and $\bS=\bS_{\!1}\times\bS_{2}\times \ldots \times \bS_{r{+}1}\subset\Aut_\bk(\La)$ with $\bS_{k}=\sym_{J_k}$.
So $\bS$ is a direct product of symmetric groups on the subsets of indices ${J}_{\!1}, \ldots {J}_{r{+}1}$ corresponding to the partition.

The elements of the group $\bS$ act on generators $\glambda$ of the ring $\La$ and on the parameters $\gzeta$ simultaneously.
Under this, involution $*$ commutes with the permutations from $\bS$; we denote by $\bG$ the group $\bG=\bS\times\langle*\rangle$.

\vspace{0.1cm}

\begin{assumption}\label{ass_mult_set}
Later in the text, when talking about a multiplicative set over the polynomial ring with involution $\La$, we will mean exclusively
the $\Si$-stable saturated, involuted, double multiplicative set $\gS$,
the word localization will mean only the localization defined above.
Moreover, we will consider only those ring automorphisms $\phi\in\Aut(\La)$ for which condition $\phi(\gS) \subset \gS$ holds.
In particular, groups $\Si$ and $\sym_\gp$ are embedded in $\Aut(\La)$.

In the case of a storey partition of the set of indices $\Jq$ compatible with ring $\La$ and subgroup $\bS$, we assume the existence
of a partial localization according to the storey partition; moreover, the partial multiplicative set
is a subset of the saturated, involuted, double multiplicative set of $\La$.
\end{assumption}

\vspace{0.1cm}

\subsection{Words and bimonomials}

\smallskip~
\smallskip~

Given two sets of indeterminates $\cX=\{\ux_i\}_{i\in\Jsq}$, $\cY=\{\uy_i\}_{i\in\Jq}$ with $\cX\cap\cY=\varnothing$
for $\Jsq=\le{1;\gq}\ra$, $\gq\in\bZ_+$,
we assume the set $\cZ=\cX\vcup\cY:=\{\ux_1,\uy_1,\ldots,\ux_\gq,\uy_\gq\}$ to be linearly ordered as follows: $\ux_1\succ\uy_1\succ\ldots\succ\ux_\gq\succ\uy_\gq$.

The set of all words in $\cZ$ is denoted by $\Word(\cZ)$; the blank or empty word is denoted by $\un$.
Any word $\omega$ will be indicated as $\omega=\uz_{i_1}\ldots\uz_{i_r}$ where each $\uz_{i_{k}}$ is one of the elements from $\cZ$.
For $\omega = \omega_1 \omega_2 \omega_3\in \Word(\cZ)$, we call $\omega_i$ a \emph{subwords} of $\omega$, $i=1,2,3$.
We say that the word $\omega$ has \emph{length} or \emph{multiplicative degree} $k$, written $\mdeg\omega = k$
if it belongs to $\cZ^k={\underbrace{\cZ\cdots\cZ}}_k$. By convention, $\mdeg(\un){=}0$.
The set of all words of multiplicative degree $k$ will be denoted $\Word_{k}(\cZ)$. We have $\Word_{0}(\cZ)=\{1\}$, $\Word_{\leqslant k}(\cZ)=\bigocup_{i=0}^k\Word_{i}(\cZ)$, and $\Word(\cZ)=\bigocup_{k{\geqslant}0}\Word_{k}(\cZ)$.

\smallskip~

\begin{definition}[of bimonomials]\label{def_bimonomial}
For a word $\ww\in\Word(\cZ)$, and for a variable $\uz\in\cZ$,
we define $\deg_{\uz}\!\ww$ to be the number of occurrences of $\uz$ in $\ww$,
and the total degree of a word $\ww$ as $\tdeg\ww=\sum_{i\in\Jq} |\deg_{\ux\is}\!\ww{-}\deg_{\uy\is}\!\ww|$.

A word $\ww$ at the generating set $\cZ=\cX\vcup\cY$ is called \emph{standard bimonomial}
if it is of the form $\ww=\uz_1^{k_1}\cdots \uz_\gq^{k_\gq}$, $k_i\geqslant{0}$ for any $i\in\Jsq$,
where $\uz_i$ equals either $\ux_i$ or $\uy_i$.
The total degree of the standard bimonomial $\ww=\uz_1^{k_1}\cdots \uz_\gq^{k_\gq}$ equals $\tdeg\ww=\sum_{i\in\Jq}k_i$.

The set of all standard bimonomials in $\cZ$ is denoted by $\Mon(\cZ)$;
let $\uii:\Mon(\cZ)\hookrightarrow\Word(\cZ)$ be the set injection.
We denote $\Mon_{d}(\cZ):=\Mon(\cZ)\cap\Word_{d}(\cZ)$.
For any word $\ww\in\Word(\cZ)$ there exists a uniquely defined standard bimonomial
\begin{align}\label{equ_standard_bimonomial}
\bww=\uz_1^{k_1}\cdots \uz_\gq^{k_\gq}\in\Mon(\cZ),\quad
\uz_i^{k_i}=\left\{
\begin{array}{ll}
\ux_i^{k_i}, & \hbox{if}\;\; k_i\geqslant{0}, \\
\uy_i^{-k_i}, & \hbox{if}\;\;  k_i<{0},
\end{array}
\right.\quad
k_i{=}\deg_{\ux\is}\!(\ww){-}\deg_{\uy\is}\!(\ww).
\end{align}
This defines a surjection
$$\varrho:\Word(\cZ)\twoheadrightarrow\Mon(\cZ),\quad \ww\mapsto\varrho(\ww)=\bww\,,$$
also, $\varrho:\Word_{\leqslant{d}}(\cZ)\twoheadrightarrow\Mon_{\leqslant{d}}(\cZ)$.
\end{definition}

We fix the set of generators $\cZ=\cX\vcup\cY:=\{\ux_1,\uy_1,\ldots,\ux_\gq,\uy_\gq\}$, given in ascending order.
\emph{The free associative monoid} with generating set $\cZ$ is defined as the set $\gM_{\!\cZ}=\Word(\cZ)$,
equipped with the multiplication $\gM_{\!\cZ}\times\gM_{\!\cZ}\longrightarrow\gM_{\!\cZ}$ obtained by the concatenation of words,
the identity equal to the blank word, and the involution $*:\gM_{\!\cZ}\to\gM_{\!\cZ}$ with
$(\ww^*)^*=\ww$, $(\ww\uu)^*=\ww^*\uu^*$, $\un^*=\un$ for any $\ww,\uu\in\gM$
such that $*$ sends $\ux\is\mapsto\uy\is$, $\uy\is\mapsto\ux\is$.

\begin{definition}\label{def_comm_monoid}
Let $\bgM_{\!\cZ}$ denote the quotient of $\gM_{\!\cZ}$ by smallest congruence containing $\ux_i\ux_j = \ux_j\ux_i$; $\uy_i\uy_j = \uy_j\uy_i$; $\ux_i\uy_i = \uy_i\ux_i = 1$ for any $i,j\in\Jsq$.
The $\bgM_{\!\cZ}$ can be identified with a set of standard bimonomials of a type \eqref{equ_standard_bimonomial},
uniquely expressed in ascending order of indices.
The projection map $\varrho:\gM_{\!\cZ}\to\bgM_{\!\cZ}$ defines the structure of associative commutative monoid $\bgM_{\!\cZ}$
with the multiplication given by:
$$\bgM_{\!\cZ}\times\bgM_{\!\cZ}\longrightarrow\bgM_{\!\cZ},\quad \ww_1\times\ww_2=\varrho(\ww_1\ww_2).$$
\end{definition}

Note there is a natural identification map $\uii: \bgM_{\!\cZ} \rightarrow \Mon(\cZ) \subset \Word(\cZ)$. Moreover, there is a canonical isomorphism $\psi: \bgM_{\!\cZ} \rightarrow \mathbb{Z}^{\otimes \gq}$ with $\ux_i \mapsto e_i,\, \uy_i \mapsto -e_i$,\, $i=1, \ldots, \gq$, where $e_1, \ldots e_\gq$ are the canonical basis of $\mathbb{Z}^{\otimes \gq}$; the involution acts as the additive inverse $-$.

Both monoids $\gM$ and $\bgM$ are involuted in the sense that they are equipped with an involution map $*$, acting according to rule
$*:\ux\is\leftrightarrow\uy\is$, and hence $\bgM\ni\ww \overset{(*)}{\to} \varrho(\ww^{\,*})\in\bgM$\,.

\vspace{0.1cm}

Further in the text, standard bimonomials will be called simply bimonomials, since we do not use others.

\vspace{0.2cm}

\section{Rational TGW algebra}

\subsection{Definition of RTGW algebra}

\smallskip~
\smallskip~

Now we present a construction of the rational twisted generalized Weyl (RTGW) algebra based on the definition of the TGW algebra given in \cite{FH}, \cite{MT-99}, \cite{MT-02}. We consider the following data:

\begin{definition}[RTGW datum]\label{def_RTGW_datum}
Let $\gp,\gq\in\bZ_+$, $0<\gq\leqslant{\gp}$.
Let $\Jsp={\le{1;\gp}\ra}$ be the index set,
and $\Jsp=\Jsq\cup\le \gq{+}1;\gp\ra=\ocup_{k=1}^{r{+}1} J_k$ be a partition of length $r+1$
provided $\Jsq={\le{1;\gq}\ra}=\ocup_{k=1}^r J_k$, $J_{r{+}1}=\le \gq{+}1;\gp\ra$.

\vspace{0.2cm}

Let $\LL=\gS\mo\!\La$ be a localization of the involuted polynomial ring $\La=\bk[\,\glambda\,]^{\gzeta}$
where $|\glambda|=|\gzeta|=\gp$, let $\Aut(\La)$ be an automorphism ring.

For the generator sets $\cZ=\cX\vcup\cY$, $|\cX|=|\cY|=\gq\,{\leqslant}\, \gp$,\, we consider the following data:
\begin{itemize}

\item
let $\Si=\Si\qsub=\langle \ssi\is\rangle_{i\in\Jq}\subseteq\Aut(\La)$ be the shift group of $\La$, $\Si\simeq\bZ^{\otimes{\gq}}$;

\item
let $\ft=(\tt_{1},\ldots,\tt_{\!\gq})$, $\tt\is\in\LLt$, be the $\gq$-tuple of the invertible elements from $\LLt$;

\item
let $*:\La\overset{\sim}{\longrightarrow}\La$ be an involution such that (a) $\la\is\mapsto\bbla\is=\zeta\is-\la\is$;\, (b) $\tt\is^{\!*}=\ssi\is(\tt\is)$;\, $*:\gS\to\gS$;

\item
if $\{\la_{i_1},\ldots,\la_{i_k}\}$ is a one part of the given partition, then we assume $\gzeta_{i_1}=\ldots=\gzeta_{i_k}$;

\item
let $\bS=\bS_{\!1}\times\bS_{2}\times \ldots \times \bS_{r{+}1}\subset\Aut_\bk(\La)$ be a direct product of symmetric groups corresponding to the partition acting on generators $\glambda$ and $\cZ$ simultaneously by the permuting of indexes belonging to the same partition;

\item
let $\bG=\bS\times\langle*\rangle\subset\Aut_\bk(\La)$, and $\Si\cap\bG=\id_{\La}$;

\item
let $\vmu=(\!(\muxx_{ij})\!)\subset\Mat_{\gq\times\gq}(\bL)$ be a $\gq\!\times\!\gq$-matrix with $\muxx_{ij}\in\LLt$, $\muxx_{ij}\muxx_{ji}=1$, and $\muxx_{ii}=1$;

\item
let there is an involution $*:\Mon(\cZ)\to\Mon(\cZ)$ such that $*:\ux\is\leftrightarrow\uy\is$;

\item
the involution $*$ satisfies : ${*}\ssi{*}=\ssi\mo\in\Si$
for any $\ssi\in\Si$ and we have
\begin{align}\label{equ_involution_rule}
\big(\ssi(\ff)\big)\st=\ssi\mo\!(\ff^*),\qquad \forall \;\; \ssi\in\Si,\;\;\; \forall\;\; \ff\in\LL\,.
\end{align}
\end{itemize}
\end{definition}

\vspace{0.1cm}

\begin{definition}\label{def_RTGWA_general}
Given RTGW datum, a rational twisted generalized Weyl (RTGW) algebra $\fA=\Alg_{\!\gq,\gp}\big(\gS,\bG,\Si,\ft,\vmu\big)$
is defined as an associative $\bk$-algebra generated by the elements of the sets $\glambda$, $\cZ$
with unital injective ring homomorphism $\rho:\La\to\fA$ such that $\rho:\gS\to\gS$ and hence $\rho:\LL\to\LL$,
modulo the following defining relations:
\begin{align}\label{equ_C-construction-relations_two}
\begin{array}{rl}
\ux\is\ff=\ssi\is(\ff)\,\ux\is, & \uy\is\ff=\ssi\is\mo(\ff)\,\uy\is,
\\
\uy\is\ux\is=\tt\is, & \ux\is\uy\is=\ssi_{i}(\tt\is),
\\
& \ux\is\ux\js=\muxx_{ij}\,\ux\js\ux\is,
\end{array}\qquad\qquad i,j\in\Jsq, \; \ff\in\LL,
\end{align}
provided that the following equations are satisfied for any $i,j,k\in\Jsq$, $k\not\in\{i,j\}$:

\begin{align}\label{equ_mu_conditions_one}
\begin{array}{ccccc}
(\muxx_{ij})^{\mo}=(\muxx_{ij})^{*}; && \ssi\is\ssi\js(\muxx_{ij})=\muxx_{ij}; &&\; \ssi\ks(\muxx_{ij})=\muxx_{ij};
\end{array}
\end{align}
\begin{align}\label{equ_mu_conditions_two}
\begin{array}{c}
\ssi_{\!i}(\muxx_{ij})\,\ssi_{\!j}(\muxx_{ij})=\dfrac{\ssi_j\mo(\bt_i)\,\bt_j}{\bt_i\,\ssi_i\mo(\bt_j)};\qquad
\ssi_{\!i}(\muxx_{ij})\,\ssi_{\!j}\mo\!(\muxx_{ij})=\dfrac{\ssi_j\mo(\bt_i)\,\ssi_i(\tt_j)}{\bt_i\,\tt_j};
\end{array}
\end{align}
\begin{align}\label{equ_mu_conditions_three}
\ssi\is(\tt\js)\ssi\ks^{\pm1}(\tt\js)=\tt\js\,\ssi\is\ssi\ks^{\pm1}(\tt\js),
\end{align}
where $\bt\is:=\tt\is^{\!*}=\ssi_{i}(\tt\is)$, $i\in\Jsq$.

We assume that the equalities obtained by the involution $*$ are also true as the original ones,
in particular, $\uy\is\uy\js = (\muxx_{ij})^* \,\uy\js\uy\is$ holds.
This principe will be called a rule of involution.
\end{definition}

Note that by the definition $\ux\is,\uy\is$ are invertible in $\fA$ from the left and from the right.

\vspace{0.2cm}

\begin{remark}\label{rem_dagger}
Applying the involution rule \eqref{equ_involution_rule}, from $\ux\is\ff = \ssi\is(\ff)\ux\is$ for $\ff\in\LL$ we get $\uy\is\ff^* = \left(\ssi\is(\ff)\right)^* \uy\is =\left(\ssi\is(\ff)\right)^* \uy\is =\ssi\is\mo(\ff^*)\uy\is$.
\end{remark}

\vspace{0.1cm}

\begin{remark}[reduction law]\label{rem_reduction_law}
Since a ring homomorphism $\rho:\La\to\fA$ is unital injective,
then any expression in algebra can be canceled from the left or right by an invertible element of the ring $\LL$, as well as by any generators belonging to the set $\cZ$.
\end{remark}

\vspace{0.2cm}

It is because any element from $\cZ$ is invertible in $\fA$ by \eqref{equ_C-construction-relations_two}.
In the following proofs, we will use this property without reference.

\vspace{0.2cm}

In the definition \ref{def_RTGWA_general} of RTGW algebra, the multiplication law is defined
using the matrix $\vmu=(\!(\muxx_{ij})\!)$ for any two elements from $\cX$ only.
But conditions \eqref{equ_mu_conditions_one} - \eqref{equ_mu_conditions_three}
allow one to determine the law of multiplication for all elements of the set $\cZ$, as the following lemma shows.

\begin{lemma}\label{lem_mu_equalities_RTGWA}
For any $i,j\in\Jsp$, we put
\begin{align}\label{lem_muxy_condition}
\muyy_{ij}=(\muxx_{ij})^*,\qquad
\muxy_{ij}=\ssi\is(\muxx_{ij})\,\dfrac{\bt\is}{\ssi\mo\js(\bt\is)}\overset{\eqref{equ_mu_conditions_two}}{=} \ssi\mo\js(\muxx_{ji})\dfrac{\ssi\is(\tt\js)}{\tt\js},\qquad \muyx_{ij}=(\muxy_{ij})^{*}\!.
\end{align}

Then RTGW algebra $\fA$ satisfies the following dependencies
\begin{align}\label{equ_mu_commutant_two_new}
\ux\is\ux\js=\muxx_{ij}\ux\js\ux\is,\quad
\uy\is\uy\js=\muyy_{ij}\uy\js\uy\is,\quad
\ux\is\uy\js=\muxy_{ij}\uy\js\ux\is,\quad
\uy\is\ux\js=\muyx_{ij}\ux\js\uy\is,\quad i,j\in\Jsq.
\end{align}

\vspace{0.2cm}

Moreover, for any $i,j,k\in\Jq$, $k\not\in\{i,j\}$ we obtain:
\begin{align}\label{equ_C-construction-relations_two_xy}
\begin{array}{ccccc}
\muxy_{ij}=\muxy_{ji};
&& \ssi\is(\muxy_{ij})=\ssi\js(\muxy_{ij}); &&\; \ssi\ks(\muxy_{ij})=\muxy_{ij}.
\end{array}
\end{align}
\end{lemma}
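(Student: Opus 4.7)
The plan is to reduce the four commutation laws of \eqref{equ_mu_commutant_two_new} and the three shift identities of \eqref{equ_C-construction-relations_two_xy} to algebraic manipulations of the defining relations \eqref{equ_C-construction-relations_two} together with the consistency conditions \eqref{equ_mu_conditions_one}--\eqref{equ_mu_conditions_three}, exploiting the two-sided invertibility of every $\ux\is,\uy\is$ in $\fA$ (since $\uy\is\ux\is=\tt\is$ and $\ux\is\uy\is=\bt\is$ lie in $\LLt$). The $\uy\is\uy\js$-commutation follows from the $\ux\is\ux\js$-commutation by applying the involution rule and using $\muxx_{ij}^{*}=\muxx_{ij}\mo$ from \eqref{equ_mu_conditions_one}, giving $\muyy_{ij}=\muxx_{ij}^{*}$; similarly, once the mixed relation $\ux\is\uy\js=\muxy_{ij}\uy\js\ux\is$ has been established, its image under $*$ yields $\uy\is\ux\js=\muyx_{ij}\ux\js\uy\is$ with $\muyx_{ij}=(\muxy_{ij})^{*}$. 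So the substantive content lies in the mixed case.

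To derive the $\ux\uy$ law and simultaneously identify $\muxy_{ij}$ in closed form, I observe that $\ux\is\uy\js\cdot\ux\js=\ssi\is(\tt\js)\ux\is$, so $\ux\is\uy\js=\ssi\is(\tt\js)\,\ux\is\ux\js\mo$. A direct manipulation of $\ux\is\ux\js=\muxx_{ij}\ux\js\ux\is$ using $\ux\js\mo\ff=\ssi\js\mo(\ff)\ux\js\mo$ for $\ff\in\LL$ gives $\ux\is\ux\js\mo=\ssi\js\mo(\muxx_{ij})\mo\ux\js\mo\ux\is$; substituting back and then converting $\ux\js\mo=\tt\js\mo\uy\js$ produces the second expression for $\muxy_{ij}$ in \eqref{lem_muxy_condition}. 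The equivalence of the two stated forms of $\muxy_{ij}$ is then a direct rearrangement of the second identity in \eqref{equ_mu_conditions_two}, and the symmetry $\muxy_{ij}=\muxy_{ji}$ follows by writing both $\muxy_{ij}$ and $\muxy_{ji}$ in the first form of \eqref{lem_muxy_condition} (using $\muxx_{ji}=\muxx_{ij}\mo$): after clearing denominators one obtains precisely the first identity of \eqref{equ_mu_conditions_two}.

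For the shift identities in \eqref{equ_C-construction-relations_two_xy}, the $\ssi\ks$-invariance for $k\notin\{i,j\}$ is immediate from $\ssi\ks(\muxx_{ij})=\muxx_{ij}$ in \eqref{equ_mu_conditions_one} together with the $\ssi\ks$-invariance of the ratio $\bt\is/\ssi\js\mo(\bt\is)=\ssi\is\bigl(\tt\is/\ssi\js\mo(\tt\is)\bigr)$ supplied by \eqref{equ_mu_conditions_three} (after adjusting the roles of the indices). I expect the main obstacle to be the identity $\ssi\is(\muxy_{ij})=\ssi\js(\muxy_{ij})$, which is not the direct restatement of any single consistency condition. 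My route is via associativity: expanding the triple product $\ux\is\ux\js\uy\is$ in the two bracketings $(\ux\is\ux\js)\uy\is$ and $\ux\is(\ux\js\uy\is)$ yields $\ssi\is(\muxy_{ji})=\muxx_{ij}\,\ssi\js(\bt\is)/\bt\is$, while the parallel computation with $\ux\js\ux\is\uy\js$ yields $\ssi\js(\muxy_{ij})=\muxx_{ij}\mo\,\ssi\is(\bt\js)/\bt\js$. Invoking the already-established symmetry $\muxy_{ij}=\muxy_{ji}$, the two shifts coincide precisely when $\muxx_{ij}^{2}=\bt\is\,\ssi\is(\bt\js)/\bigl(\ssi\js(\bt\is)\,\bt\js\bigr)$; this is obtained as the $\ssi\is$-image of the involutional companion of the second identity in \eqref{equ_mu_conditions_two}, a companion guaranteed by the rule of involution written into Definition \ref{def_RTGWA_general}.
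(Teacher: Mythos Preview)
Your argument is correct and runs parallel to the paper's in structure: the $\uy\uy$ and $\uy\ux$ laws via involution, the $\ux\uy$ law by a cancellation trick, and then the three shift identities checked algebraically. Two tactical differences are worth noting.

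For the mixed relation, the paper right-multiplies $\ux\is\uy\js$ by $\uy\is$ and uses the already-established $\uy\js\uy\is=\muyy_{ji}\uy\is\uy\js$ to get
$\ux\is\uy\js\cdot\uy\is=\ssi\is(\muxx_{ij})\bt\is\,\uy\js=\muxy_{ij}\,\ssi\js\mo(\bt\is)\,\uy\js=\muxy_{ij}\uy\js\ux\is\cdot\uy\is$,
so it lands directly on the \emph{first} form $\muxy_{ij}=\ssi\is(\muxx_{ij})\bt\is/\ssi\js\mo(\bt\is)$. You instead right-multiply by $\ux\js$, pass through $\ux\js\mo$, and obtain the \emph{second} form $\ssi\js\mo(\muxx_{ji})\ssi\is(\tt\js)/\tt\js$ first. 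Both are fine; the paper's route avoids introducing formal inverses of the $\ux$'s.

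For the identity $\ssi\is(\muxy_{ij})=\ssi\js(\muxy_{ij})$ the paper's proof is a one-line direct substitution: it computes $\ssi\js\ssi\is\mo(\muxy_{ij})$ using the first form of $\muxy_{ij}$, invokes $\ssi\is\ssi\js(\muxx_{ij})=\muxx_{ij}$ from \eqref{equ_mu_conditions_one}, and recognises the result as the second form of $\muxy_{ji}=\muxy_{ij}$. Your associativity argument with the two bracketings of $\ux\is\ux\js\uy\is$ and $\ux\js\ux\is\uy\js$ is valid and reduces correctly to $\muxx_{ij}^{2}=\bt\is\,\ssi\is(\bt\js)/\bigl(\ssi\js(\bt\is)\,\bt\js\bigr)$, and your derivation of that quadratic identity (apply $*$ to the second relation of \eqref{equ_mu_conditions_two}, then $\ssi\is$, then use $\ssi\is\ssi\js(\muxx_{ij})=\muxx_{ij}$) is correct; but it is a noticeably longer detour than the paper's direct computation.
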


\begin{proof}
First, using the principle of involution, we obtain $\uy\is\uy\js=(\ux\is\ux\js)^* \overset{\eqref{equ_C-construction-relations_two}}{=}(\muxx_{ij}\ux\js\ux\is)^*=\muyy_{ij}\uy\js\uy\is$.

To prove $\ux\is\uy\js=\muxy_{ij}\uy\js\ux\is$ we calculate\,
$\ux\is\uy\js\cdot\uy\is = \ssi\is(\muyy_{ji})\,\bt\is\uy\js= \ssi\is(\muxx_{ij})\,\bt\is\uy\js=\muxy_{ij}\ssi\mo\js(\bt\is)\uy\js =\muxy_{ij}\uy\js\ux\is\cdot\uy\is$ whence $\ux\is\uy\js{=}\muxy_{ij}\uy\js\ux\is$ follows.

Finally, we have $\uy\is\ux\js=(\ux\is\uy\js)^* {=}(\muxy_{ij}\uy\js\ux\is)^*=\muyx_{ij}\ux\js\uy\is$,
which completes the proof of the equalities \eqref{equ_mu_commutant_two_new}.

The first equality of \eqref{equ_C-construction-relations_two_xy} follows from
$\muxy_{ij}=\ssi\is(\muxx_{ij})\,\dfrac{\bt\is}{\ssi\mo\js(\bt\is)} \overset{\eqref{equ_mu_conditions_two}}{=}\ssi\js(\muxx_{ji})\,\dfrac{\bt\js}{\ssi\mo\is(\bt\js)} =\muxy_{ji}$.

Further, we get the second equality of \eqref{equ_C-construction-relations_two_xy} since
$$\ssi\js\ssi\is\mo(\muxy_{ij})= \ssi\js\ssi\is\mo\!\left(\!\ssi\is(\muxx_{ij})\,\dfrac{\bt\is}{\ssi\mo\js(\bt\is)}\right)
=\ssi\js(\muxx_{ij})\,\dfrac{\ssi\js(\tt\is)}{\tt\is} \overset{\eqref{equ_mu_conditions_one}}{=}
\ssi\mo\is\!(\muxx_{ij})\dfrac{\ssi\js(\tt\is)}{\tt\is}\overset{\eqref{lem_muxy_condition}}{=}\muxy_{ji}=\muxy_{ij}.$$

Finally, we get
$\ssi\ks(\muxy_{ij}){=}\ssi\ks\!\left(\!\ssi\mo\js\!(\muxx_{ji}\!)\dfrac{\ssi\is(\tt\js)}{\tt\js}\right) \overset{\eqref{equ_mu_conditions_two}}{=}
\ssi\mo\js\!(\muxx_{ji})\dfrac{\ssi\ks\ssi\is(\tt\js)}{\ssi\ks(\tt\js)} \overset{\eqref{equ_mu_conditions_three}}{=} \ssi\mo\js\!(\muxx_{ji})\dfrac{\ssi\is(\tt\js)}{\tt\js}{=}\muxy_{ij}$.
\end{proof}


\begin{corollary}
The values of the structure constants $\muxy_{ij},\muyx_{ij},\muyy_{ij}$ for any indices $i,j$
are uniquely determined by the values of $\muxx_{ij}$, $\tt\is$ and their shifts.
For any $i,j\in\Jsq$, $\tt_i,\bt_i,\muxx_{ij},\muxy_{ij},\muyx_{ij}, \muyy_{ij}\in\LLt$.
These parameters obey the relations:
$\muxy_{ii}={\bt_i}/{\tt_i}$,\, and\;
$\muxx_{ij}\muxx_{ji}=\muyy_{ij}\muyy_{ji}=\muxy_{ij}\muyx_{ji}=1$.
\end{corollary}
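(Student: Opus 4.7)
The plan is to extract each claim directly from the explicit formulas of Lemma~\ref{lem_mu_equalities_RTGWA} together with the axioms on the RTGW datum, invoking the reduction law of Remark~\ref{rem_reduction_law} only for the three ``unitarity'' relations at the end. First, the assertion that $\muxy_{ij}$, $\muyx_{ij}$ and $\muyy_{ij}$ are uniquely determined by $\muxx_{ij}$, $\tt\is$ and their shifts is immediate from \eqref{lem_muxy_condition}: one just notes that $\bt\is=\ssi\is(\tt\is)$ and that the action of $*$ on any $\Sigma$-shift of an element of $\LL$ is controlled by the rule \eqref{equ_involution_rule}.

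Next I would verify that every structure constant lies in $\LLt$. By hypothesis $\tt\is\in\LLt$, and $\bt\is=\ssi\is(\tt\is)\in\LLt$ since $\ssi\is(\gS)\subset\gS$ by Assumption~\ref{ass_mult_set}; similarly $*$ preserves $\LLt$ because $*:\gS\to\gS$. Each of $\muxy_{ij}$, $\muyx_{ij}$, $\muyy_{ij}$ is then a finite product of $\LLt$-elements via \eqref{lem_muxy_condition}, hence again lies in $\LLt$. The identity $\muxy_{ii}=\bt_i/\tt_i$ follows by setting $j=i$ in the first formula of \eqref{lem_muxy_condition}, using $\muxx_{ii}=1$ and $\ssi\is\mo(\bt\is)=\tt\is$.

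For the product identities, $\muxx_{ij}\muxx_{ji}=1$ is an axiom of the RTGW datum. Applying $*$ to this relation and recalling $\muyy_{ij}=(\muxx_{ij})^*$ yields $\muyy_{ij}\muyy_{ji}=1$ at once. The remaining identity $\muxy_{ij}\muyx_{ji}=1$ I would derive inside $\fA$: substituting the swap $\uy\js\ux\is=\muyx_{ji}\,\ux\is\uy\js$ into $\ux\is\uy\js=\muxy_{ij}\,\uy\js\ux\is$ gives $\ux\is\uy\js=(\muxy_{ij}\muyx_{ji})\,\ux\is\uy\js$ after using the commutativity of coefficients in $\LL$; cancelling $\ux\is\uy\js$ from the right by Remark~\ref{rem_reduction_law} produces the claim as an equality in $\LL$.

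The only delicate point is this final cancellation: one must justify that $\ux\is\uy\js$ is cancellable, which rests on the invertibility of $\ux\is$ and $\uy\js$ in $\fA$ (recorded immediately after Definition~\ref{def_RTGWA_general}) together with the injectivity of $\rho:\La\to\fA$. Apart from this bookkeeping, every step reduces to routine manipulation in the commutative ring $\LL$ using the closure properties of $\gS$ under the shift group and the involution.
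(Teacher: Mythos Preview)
Your argument is correct and matches what the paper intends: the corollary is stated there without proof, as an immediate consequence of Lemma~\ref{lem_mu_equalities_RTGWA} and the datum axioms, and your proof simply makes those implications explicit. The one place you deviate slightly is the identity $\muxy_{ij}\muyx_{ji}=1$: you derive it inside $\fA$ via cancellation, whereas one can also stay purely in $\LL$ by combining the second expression of \eqref{lem_muxy_condition}, $\muxy_{ij}=\ssi\js\mo(\muxx_{ji})\,\ssi\is(\tt\js)/\tt\js$, with the formula $\muyx_{ji}=\tt\js/\ssi\is(\tt\js)\cdot\ssi\js\mo(\muxx_{ij})$ of Corollary~\ref{cor_mu_tt_connections}(a), so that the $\tt$-factors cancel and $\ssi\js\mo(\muxx_{ji}\muxx_{ij})=1$; both routes are equally valid and equally short.
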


\vspace{0.1cm}

\begin{corollary}\label{cor_mu_tt_connections}
In algebra $\fA$, there are such dependencies between the entered parameters:
\begin{gather}\label{equ_mu_tt_connections}
(a)\;\; \muyx_{ij}{=}\dfrac{\tt_i}{\ssi_j(\tt_i)}\ssi\mo_i(\muxx_{ji});\quad
(b)\;\; \muyy_{ij}{=}\dfrac{\tt_i\ssi\mo_i(\tt_j\!)}{\ssi\mo_j(\tt_i\!)\tt_j}\muxx_{ij}.
\end{gather}
\end{corollary}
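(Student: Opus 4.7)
The plan is to derive both identities directly from Lemma~\ref{lem_mu_equalities_RTGWA}, without appealing again to the defining conditions \eqref{equ_mu_conditions_two}--\eqref{equ_mu_conditions_three} beyond what is already packaged there.

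For part~(a), I would start from the identity $\muyx_{ij}=(\muxy_{ij})^{\ast}$ recorded in the lemma and apply $\ast$ to the first presentation $\muxy_{ij}=\ssi\is(\muxx_{ij})\,\bt\is/\ssi\mo\js(\bt\is)$. Pushing $\ast$ past each shift via the involution rule \eqref{equ_involution_rule}, and substituting the two elementary facts $\bt_i^{\ast}=\tt_i$ (since $\ast$ is an involution and $\bt_i=\tt_i^{\ast}$) together with $(\muxx_{ij})^{\ast}=\muyy_{ij}=\muxx_{ji}$ --- the last equality coming from $(\muxx_{ij})^{-1}=(\muxx_{ij})^{\ast}$ in \eqref{equ_mu_conditions_one} combined with the definitional relation $\muxx_{ij}\muxx_{ji}=1$ --- immediately yields the claimed formula.

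For part~(b), the starting point is the equality of the two presentations of $\muxy_{ij}$ recorded in the lemma,
\begin{equation*}
\ssi\is(\muxx_{ij})\,\frac{\bt\is}{\ssi\mo\js(\bt\is)} \;=\; \ssi\mo\js(\muxx_{ji})\,\frac{\ssi\is(\tt\js)}{\tt\js}.
\end{equation*}
Applying $\ssi\js$ to both sides and using $\ssi\is\ssi\js(\muxx_{ij})=\muxx_{ij}$ from \eqref{equ_mu_conditions_one} to collapse $\ssi\js\ssi\is(\muxx_{ij})$ to $\muxx_{ij}$, one solves explicitly for the ratio $\muxx_{ji}/\muxx_{ij}$, obtaining an expression in the positive shifts of $\tt_i,\tt_j$ after substituting $\bt_k=\ssi\ks(\tt\ks)$. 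The key observation is that this ratio is itself $\ssi\is\ssi\js$-invariant, since $\muxx_{ij}$ and $\muxx_{ji}$ both are; hence applying $\ssi\mo\is\ssi\mo\js$ and using the commutativity of $\Sigma$ rewrites the same value as the symmetric expression $\tt_i\,\ssi\mo_i(\tt_j)/\bigl(\ssi\mo_j(\tt_i)\,\tt_j\bigr)$. The stated identity for $\muyy_{ij}$ then follows from the equality $\muyy_{ij}=\muxx_{ji}$ already used in part~(a).

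The only delicate point is the shift bookkeeping in~(b): the natural derivation produces an expression in positive shifts, while the target formula is phrased with inverse shifts, and the bridge between them is precisely the $\ssi\is\ssi\js$-invariance of the ratio $\muxx_{ji}/\muxx_{ij}$. No new input from the mu-conditions beyond what is already captured in Lemma~\ref{lem_mu_equalities_RTGWA} is needed.
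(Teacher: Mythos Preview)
Your argument is correct, but it differs in spirit from the paper's own proof. The paper derives both identities by exploiting associativity in $\fA$ directly: for (a) it computes the product $\uy\is\ux\js\ux\is$ in two ways (first commuting $\uy\is$ past $\ux\js$, then instead commuting $\ux\js$ past $\ux\is$), and for (b) it does the same with $\uy\is\uy\js\ux\is\ux\js$; the reduction law then allows cancellation of the trailing monomial. Your route stays entirely at the level of the structure constants, pushing the involution and shifts through the formulas already collected in Lemma~\ref{lem_mu_equalities_RTGWA}. What the paper's approach buys is brevity and a transparent reason for the formulas (they encode specific instances of associativity); what your approach buys is independence from the algebra itself --- you never touch a single element of $\fA$, only relations among parameters in $\LL$, which makes the identities visibly consequences of the $\mu$-conditions alone. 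The one step in your argument that deserves explicit mention is the passage from positive to inverse shifts in (b) via the $\ssi\is\ssi\js$-invariance of $\muxx_{ji}/\muxx_{ij}$; you handle it correctly, and it is indeed the only place where a careless reader might lose the thread.
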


\begin{proof}
For the proof of two first equalities, using the assumption of associativity, we conclude

$(a)\; \muyx_{ij}\ssi_j(\tt_i)\ux_j = \uy_i\ux_j\ux_i = \ssi\mo_i(\muxx_{ji})\,\tt_i\ux_j$;

$(b)\quad \muyy_{ij}\ssi\mo_j(\tt_i\!)\tt_j=\uy_i\uy_j\ux_i\ux_j =\ssi\mo_i\ssi\mo_j(\muxx_{ij})\ssi\mo_i(\tt_j\!)\tt_i
\,\overset{\eqref{equ_mu_conditions_one}}{=}\, \muxx_{ij}\ssi\mo_i(\tt_j\!)\tt_i$.\quad
Q.E.D.
\end{proof}

\vspace{0.4cm}

We can consider the RTGW algebra as $\bZ^{\!\gq}$-graded by putting $\deg\ux \is = e_i$, $\deg\uy \is = - e_i, i=1,\ldots,\gq$,
where $\{ e_i \}_{i=1}^\gq$ is the canonical basis of $\bZ^{\!\gq}$.

\vspace{0.6cm}

\begin{proposition}\label{prop_RTGWA_crossprod}
The algebra $\fA$ is a crossed product ring of $\LL$ and $\bZ^{\!\gq}$. It is an Ore domain.
\begin{proof}
Each homogeneous component of $\fA$ with the above graduation contains an invertible element. In fact, by equation (2.1), since each $\tt \is \in \LLt$, each bimonomial on the $\ux \is, \uy \js$ is also invertible. Hence the statement about cross products follows from \cite[1.4]{NO1}.
Since the group $\bZ^\gq$ is an ordered group by \cite[13.1.6]{Pass1} and $\LL$ is a domain, the crossed product is a domain, \cite[Proposition 8.3]{Zhang}.
\end{proof}
\end{proposition}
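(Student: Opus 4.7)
The plan is to leverage the $\bZ^{\!\gq}$-grading of $\fA$ indicated just above the proposition and to realize $\fA$ as a crossed product via the standard characterization in \cite[1.4]{NO1}: a $G$-graded ring in which every homogeneous component contains a unit is necessarily a crossed product of its degree-zero part by $G$.

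First I would verify that the defining relations \eqref{equ_C-construction-relations_two} are homogeneous for the assignment $\deg\ux_i = e_i$, $\deg\uy_i = -e_i$, and $\LL$ in degree $0$, so that $\fA = \bigoplus_{\alpha\in\bZ^{\!\gq}}\fA_\alpha$ is a well-defined $\bZ^{\!\gq}$-graded ring. Then, for each $\alpha = (k_1,\ldots,k_\gq)\in\bZ^{\!\gq}$, I would write down the standard bimonomial $u_\alpha = \uz_1^{|k_1|}\cdots\uz_\gq^{|k_\gq|}\in\fA_\alpha$ (with $\uz_i=\ux_i$ if $k_i\geqslant 0$ and $\uz_i=\uy_i$ otherwise) and show it is invertible. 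Invertibility reduces to that of each factor $\ux_i$ (and, by the involution rule, of $\uy_i$): the relations $\uy_i\ux_i = \tt_i$ and $\ux_i\uy_i = \ssi_i(\tt_i)$ with $\tt_i\in\LLt$ immediately yield $\ux_i^{\!-\!1} = \tt_i^{\!-\!1}\uy_i = \uy_i\,\ssi_i(\tt_i)^{\!-\!1}$, the two expressions agreeing via the commutation rule $\uy_i\ff = \ssi_i^{\!-\!1}(\ff)\uy_i$. Thus every homogeneous component contains a unit, and \cite[1.4]{NO1} gives a crossed-product decomposition $\fA \cong \fA_0 * \bZ^{\!\gq}$. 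Identifying $\fA_0$ with $\LL$ requires one additional reduction: since $\fA_\alpha = \fA_0\,u_\alpha$ is a free rank-one $\fA_0$-module for every choice of $u_\alpha$, any degree-zero word in the $\ux_i,\uy_i$ can be rewritten, by repeated use of $\uy_i\ux_i = \tt_i$, $\ux_i\uy_i=\ssi_i(\tt_i)$, and the $\mu$-commutation relations, as an element of $\rho(\LL)$; together with the inclusion $\rho\colon\LL\hookrightarrow\fA_0$ this forces $\fA_0 = \LL$.

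For the Ore-domain claim, I would follow the sketch in the paper. The group $\bZ^{\!\gq}$ is torsion-free and carries a total order compatible with its group law, for instance the lexicographic one, as recalled in \cite[13.1.6]{Pass1}. Since $\LL$ is a commutative domain, \cite[Proposition 8.3]{Zhang} implies that $\LL * \bZ^{\!\gq}$ is a domain. To upgrade "domain" to "Ore domain", one observes that $\LL$ is Noetherian as a localization of the polynomial ring $\bk[\glambda]$, and a crossed product of a Noetherian ring by the polycyclic group $\bZ^{\!\gq}$ is itself Noetherian (by iterated application of the Hilbert basis theorem for skew Laurent extensions); a Noetherian domain satisfies the Ore condition.

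The main obstacle I anticipate is the normal-form argument establishing $\fA_0 = \LL$: without it, \cite[1.4]{NO1} gives only $\fA \cong \fA_0 * \bZ^{\!\gq}$ with $\fA_0$ a priori possibly larger than $\LL$, and one has to verify by a diamond-lemma-style reduction that every degree-zero word in the generators collapses to an element of $\rho(\LL)$. The remaining steps are mechanical invocations of the structural theorems cited above.
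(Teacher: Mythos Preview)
Your proposal is correct and follows essentially the same route as the paper's proof: exhibit a unit in every homogeneous component (the bimonomials, invertible because each $\tt_i\in\LLt$), invoke \cite[1.4]{NO1} for the crossed-product structure, and then use the ordering on $\bZ^{\gq}$ together with \cite[Proposition 8.3]{Zhang} for the domain property. The two places where you go beyond the paper's terse argument---the explicit reduction showing $\fA_0=\LL$ rather than merely $\fA_0\supseteq\LL$, and the Noetherian step upgrading ``domain'' to ``Ore domain''---are genuine details the paper leaves implicit, and your handling of them is sound.
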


\begin{corollary}\label{cor_elements_of_A}
Let $\fA=\Alg_{\!\gq,\gp}\big(\gS,\bG,\Si,\ft,\vmu\big)$ be RTGW algebra.
Consider the generating set $\cZ=\cX\vcup\cY$, and $\bgM=\bgM_{\!\cZ}$ as above defined.
Any element $\gF$ of $\fA$ can be reduced as finite sum
$$\gF={\osum}_{\bww\in\bgM}\ff_{\bww}\bww\in\LL\!\bgM$$
where $\LL\bgM={\obigoplus}_{\bww\in\bgM}\LL\bww$ denotes the left $\LL$-module with the countable base $\bgM$.
Here the sum is taken over different bimonomials, therefore the equality ${\osum}_{\bww\in\bgM}\ff_{\bww}\bww=0$
induces $\ff_{\bww}{=}0$ for any $\ww\in\gM$.
\end{corollary}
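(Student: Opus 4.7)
The plan is to split the statement into two claims: (i) the standard bimonomials span $\fA$ as a left $\LL$-module, and (ii) they are $\LL$-linearly independent, i.e., $\LL\bgM$ is actually a direct sum. The first claim reduces to a rewriting argument using the defining relations, while the second is the substantive part and will follow almost immediately from the crossed product description established in Proposition \ref{prop_RTGWA_crossprod}.

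For the spanning claim I would first recall that, as a $\bk$-algebra, $\fA$ is generated by $\rho(\LL)$ together with $\cZ=\cX\vcup\cY$, so an arbitrary element is a finite $\bk$-linear combination of words in these generators. Using the straightening rules $\ux\is\ff=\ssi\is(\ff)\ux\is$ and $\uy\is\ff=\ssi\is\mo(\ff)\uy\is$ from \eqref{equ_C-construction-relations_two} (and Remark \ref{rem_dagger}), every coefficient from $\LL$ can be pushed to the left of each word. What remains is a word $\ww\in\Word(\cZ)$, and to rewrite it as a standard bimonomial we apply two kinds of reductions: the elimination rules $\uy\is\ux\is=\tt\is$ and $\ux\is\uy\is=\ssi\is(\tt\is)$, which decrease the word length by two, and the commutation rules $\ux\is\ux\js=\muxx_{ij}\ux\js\ux\is$, $\uy\is\uy\js=\muyy_{ij}\uy\js\uy\is$, $\ux\is\uy\js=\muxy_{ij}\uy\js\ux\is$, $\uy\is\ux\js=\muyx_{ij}\ux\js\uy\is$ from Lemma \ref{lem_mu_equalities_RTGWA}, each of which reorders adjacent letters while multiplying by an element of $\LLt$ (which then needs to be pushed to the left as well). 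Induction on the pair (total degree, number of inversions with respect to the order on $\cZ$) shows that this procedure terminates with a finite sum in $\sum_{\bww\in\bgM}\LL\bww$, exactly matching the map $\varrho:\Word(\cZ)\twoheadrightarrow\Mon(\cZ)$ of Definition \ref{def_bimonomial}.

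For uniqueness I would invoke Proposition \ref{prop_RTGWA_crossprod}: $\fA$ is a crossed product of $\LL$ and $\bZ^{\gq}$ with respect to the $\bZ^{\gq}$-grading $\deg\ux\is=e_i$, $\deg\uy\is=-e_i$. By the canonical isomorphism $\psi:\bgM\overset{\sim}{\to}\bZ^{\otimes\gq}$ of Definition \ref{def_comm_monoid}, the standard bimonomials form a complete set of representatives, one in each homogeneous degree. Each $\bww\in\bgM$ is invertible in $\fA$ (since every $\tt\is\in\LLt$ makes each $\ux\is$ and $\uy\is$ two-sided invertible), so it serves as a crossed-product unit generating its degree as a rank-one free $\LL$-module. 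Consequently the sum $\sum_{\bww\in\bgM}\LL\bww$ is direct, and an equality $\sum_{\bww}\ff_{\bww}\bww=0$ forces $\ff_{\bww}\bww=0$ for every $\bww$; multiplying by $\bww\mo$ on the right and using that $\fA$ is a domain gives $\ff_{\bww}=0$.

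The only potentially subtle point I anticipate is verifying that the rewriting procedure in the spanning step is genuinely confluent and terminates; the multiplicative well-foundedness can be obtained by the termination measure $(\tdeg\ww,\mathrm{inv}(\ww))$ ordered lexicographically, where $\mathrm{inv}$ counts inversions with respect to the order $\ux_1\succ\uy_1\succ\cdots\succ\ux_\gq\succ\uy_\gq$. Confluence of the different rewrite choices is ensured precisely by the consistency relations \eqref{equ_mu_conditions_one}--\eqref{equ_mu_conditions_three} and the derived identities of Lemma \ref{lem_mu_equalities_RTGWA}, so no additional work is needed beyond what the definition of the RTGW datum already provides. Uniqueness, by contrast, is essentially free once Proposition \ref{prop_RTGWA_crossprod} has been cited.
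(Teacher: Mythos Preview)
Your argument is correct and matches the paper's approach: the corollary sits directly after Proposition~\ref{prop_RTGWA_crossprod}, and the paper's entire proof is the single line ``It follows from the skew commutativity property in $\fA$'', which is precisely your rewriting-for-spanning plus crossed-product-grading-for-independence. One small correction to your termination measure: the first coordinate should be $\mdeg$ (word length), not $\tdeg$ --- in Definition~\ref{def_bimonomial} one has $\tdeg\ww=\sum_i|\deg_{\ux_i}\ww-\deg_{\uy_i}\ww|$, which is \emph{unchanged} by the elimination $\ux\is\uy\is\mapsto\ssi\is(\tt\is)$, so it cannot witness progress for that rule; replacing $\tdeg$ by $\mdeg$ fixes this and the rest of your argument goes through as written.
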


It follows from the skew commutativity property in $\fA$.

\vspace{0.2cm}

\begin{corollary}
For any $i,j$, the parameter $\muxx_{ij}$ depends only on the variables $\la\is$ and $\la\js$ (since $\ssi\ks(\muxx_{ij})=\muxx_{ij}$ by definition).
Moreover, $\muxx_{ij}$ is a function of $\la\is+\bbla\js$ over $\bk$ since $\ssi\is\ssi\js(\muxx_{ij})=\muxx_{ij}$.
Considering that $\muxx_{ji}=(\muxx_{ij})^{\pi_{ij}}$, we can assert that $\muxx_{ij}$ is a product of the fractions of a type
$\dfrac{\la\is{+}\bbla\js{+}c}{\bbla\is{+}\la\js{+}c}$, $c\in\bZ$.
Hence, $\muxy_{ij}$ is a product of the fractions of a type $\dfrac{\la\is{+}\la\js{+}c}{\bbla\is{+}\bbla\js{+}c}$.
\end{corollary}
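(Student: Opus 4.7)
The plan is a three-stage reduction on $\muxx_{ij}$: first localize its support inside the unit group $\LLt$ to the variables $\la\is,\la\js$, then reduce further to a single linear combination, and finally close the argument using the relation $\muxx_{ij}\muxx_{ji}=1$ to pair numerator against denominator. The claim on $\muxy_{ij}$ then follows by the identical scheme run with a different pair of shift invariances. Throughout I use, via Remark \ref{rem_Om_prime_diskr}, that $\LLt$ is freely generated (modulo $\bk^\times$) by the $\Si$-translates of the indecomposable linear forms $\la\is$, $2\la\is{-}1$, $\la\is{\pm}\la\js$; each $\mu\in\LLt$ therefore has a well-defined integer multiplicity at each such form, and invariance of $\mu$ under $\ssi\ks$ is equivalent to the vanishing of all multiplicities at forms involving $\la\ks$.

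Stage one uses $\ssi\ks(\muxx_{ij})=\muxx_{ij}$ for $k\notin\{i,j\}$ from \eqref{equ_mu_conditions_one} to kill every factor containing a variable other than $\la\is,\la\js$. Stage two uses $\ssi\is\ssi\js(\muxx_{ij})=\muxx_{ij}$ from the same \eqref{equ_mu_conditions_one} to eliminate every remaining candidate except $\la\is{-}\la\js{+}c$ (the shifts of $\la\is$, $2\la\is{-}1$, $\la\js$, and $\la\is{+}\la\js$ under $\ssi\is\ssi\js$ are all nontrivial, while $\la\is{-}\la\js$ is fixed), giving
\begin{align*}
\muxx_{ij}=C\prod_{c\in\bZ}(\la\is-\la\js+c)^{n_c},\qquad C\in\bk^\times,\;\;n_c\in\bZ\text{ with finite support.}
\end{align*}
Since $\la\is+\bbla\js=(\la\is-\la\js)+\zeta\js$ with $\zeta\js\in\{0,1\}$, this already yields the intermediate claim that $\muxx_{ij}$ is a rational function of $\la\is+\bbla\js$ over $\bk$.

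Stage three imposes $\muxx_{ji}=(\muxx_{ij})^{\pi_{ij}}$ together with $\muxx_{ij}\muxx_{ji}=1$ (Definition \ref{def_RTGWA_general}); setting $u=\la\is-\la\js$ and using $\pi_{ij}(u)=-u$, this reads
\begin{align*}
1=C^{2}(-1)^{\sum_c n_c}\prod_{c\in\bZ}(u+c)^{n_c+n_{-c}},
\end{align*}
which forces $n_{-c}=-n_c$ for every $c$ and then $C^{2}=1$. Pairing each $(u+c)^{n_c}$ with its partner $(u-c)^{n_c}$ and translating back through $\la\is+\bbla\js+c=u+(\zeta\js+c)$ and $\bbla\is+\la\js+c=-u+(\zeta\is+c)$ displays each paired factor as an integer power of an elementary quotient of the type $\tfrac{\la\is+\bbla\js+c}{\bbla\is+\la\js+c}$ (the residual scalar $\pm 1$ being absorbed in the parity of the number of fractions, each of which carries an intrinsic $-1$). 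The claim on $\muxy_{ij}$ is obtained by running the same three stages with \eqref{equ_C-construction-relations_two_xy}: $\ssi\ks(\muxy_{ij})=\muxy_{ij}$ localizes the support and $\ssi\is(\muxy_{ij})=\ssi\js(\muxy_{ij})$, i.e.\ $\ssi\is\ssi\js^{-1}$-invariance, selects $\la\is+\la\js$ as the relevant linear combination, while the pairing step uses $\muxy_{ij}\muyx_{ji}=1$ with $\muyx_{ji}=(\muxy_{ij})^*$ and $(\la\is+\la\js)^*=(\zeta\is+\zeta\js)-(\la\is+\la\js)$.

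The only genuinely nontrivial step is stage one, whose legitimacy rests on unique factorization in $\LLt$ modulo $\bk^\times$; this is where the assumption that $\gS$ is saturated, involuted and double (Definition \ref{def_isd_mult_set}) enters essentially. The remaining stages amount to bookkeeping inside the free abelian group $\LLt/\bk^\times$, with the mild subtlety of correctly tracking the scalar $C$, the parity of $\sum_c n_c$, and the signs and integer shifts $\zeta\is,\zeta\js$ contributed by each elementary fraction when one re-expresses the pairing in the variables $\la\is+\bbla\js$ and $\bbla\is+\la\js$.
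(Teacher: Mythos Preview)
Your argument is correct and follows precisely the route the paper indicates: the paper gives no separate proof of this corollary beyond the reasoning embedded in its statement, and you have faithfully expanded that sketch by making explicit the free abelian structure of $\LLt/\bk^\times$ coming from the saturated double multiplicative set, then running the three invariance/pairing reductions. One small slip: in stage three the partner of $(u+c)^{n_c}$ is $(u-c)^{-n_c}$ (since $n_{-c}=-n_c$), not $(u-c)^{n_c}$; also note that the residual scalar $C=\pm 1$ is not fully eliminated by your parity argument (and indeed the paper's own examples carry an explicit minus sign), so the conclusion should be read up to sign, matching the informal phrasing ``of a type'' in the statement.
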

\vspace{0.1cm}

\vspace{0.1cm}

\begin{proposition}\label{prop_FH_example}
The TGW algebra $\mathcal{A}$ (see \cite{FH}, definitions 2.2 and 2.3) can be viewed as an RTGW algebra $\fA$ over some localization of the basic ring $R$ for which the fulfillment of condition $\mu_{ij}\in\bk$ is not required.
\end{proposition}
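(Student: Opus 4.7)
The plan is to package an FH datum $(R,\sigma,t,\mu)$ into an RTGW datum over a localization of $R$, and then to verify the axioms of Definition \ref{def_RTGWA_general} one by one, exploiting the fact that the scalar hypothesis $\mu_{ij}\in\bk^\times$ trivializes all the invariance conditions of \eqref{equ_mu_conditions_one}.

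First I would take $\gp=\gq=n$ and realize $R$ as a polynomial ring with involution $\La=\bk[\,\glambda\,]^{\gzeta}$, choosing $\gzeta$ compatibly with the FH description of the $t_i$. Then I form the multiplicative set $\gS\subset\La$ as the $\Si$-stable, involuted, saturated, double multiplicative closure, in the sense of Definition \ref{def_isd_mult_set} and Remark \ref{rem_Om_prime_diskr}, of the union of the indecomposable factors of the $t_i$ with the canonical linear polynomials $\la\is,\;2\la\is-1,\;\la\is\pm\la\js$ listed there. Setting $\LL=\gS^{-1}\La$, each $t_i$ becomes invertible, so we may put $\tt\is:=t_i\in\LLt$. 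The remaining data are trivial: the storey partition consists of singletons, $\bS=1$ and $\bG=\langle*\rangle$; the group $\Si=\langle\ssi\is\rangle$ is identified with the FH shift group acting on $\La$; and $\vmu=(\!(\muxx_{ij})\!)$ is the constant matrix with $\muxx_{ij}=\mu_{ij}$.

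With this dictionary the axioms are checked directly. Relations \eqref{equ_C-construction-relations_two} literally coincide with the FH defining relations. Every identity in \eqref{equ_mu_conditions_one} is automatic because scalars are shift-invariant: the condition $(\muxx_{ij})^{-1}=(\muxx_{ij})^{*}$ reduces to $\mu_{ij}\mu_{ji}=1$ together with the unitarity convention imposed by the chosen $*$ on $\bk$, while $\ssi\ks(\muxx_{ij})=\muxx_{ij}$ and $\ssi\is\ssi\js(\muxx_{ij})=\muxx_{ij}$ are tautologies for $\muxx_{ij}\in\bk$. After cancelling the constants, the two identities of \eqref{equ_mu_conditions_two} collapse into one equation between ratios of shifted $t_i$'s, which is precisely the FH consistency relation; equation \eqref{equ_mu_conditions_three} restates the commutation of the $t_j$ with the shifts $\ssi\is,\ssi\ks$ already implicit in the FH framework.

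The main obstacle is arranging the involution, which FH do not require: one must exhibit an involution $*:\La\to\La$ with $\la\is^*=\zeta\is-\la\is$, $\tt\is^{*}=\ssi\is(\tt\is)$, and $*(\gS)\subseteq\gS$. This is handled by embedding $R$ into $\bk[\,\glambda\,]^{\gzeta}$ so that each $t_i$ is written, modulo units, as a polynomial fixed by $*\circ\ssi\is^{-1}$ (which holds in the standard families of FH examples) and enlarging $\gS$ to be involuted. Once this is done, the algebra $\fA=\Alg_{n,n}\big(\gS,\bG,\Si,\ft,\vmu\big)$ is an RTGW algebra in which $\vmu$ happens to be scalar, and a direct comparison of generators and defining relations identifies it with $\mathcal{A}$; the key point, recorded by the proposition, is that the RTGW axioms nowhere force the constraint $\muxx_{ij}\in\bk$.
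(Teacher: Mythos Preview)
Your dictionary misidentifies which RTGW parameter corresponds to the FH scalar $\mu_{ij}$. In the Futorny--Hartwig presentation the scalar twist governs the mixed relation $X_iY_j=\mu_{ij}Y_jX_i$; accordingly the paper sets $\muxy_{ij}=\mu_{ij}\in\bk^\times$, not $\muxx_{ij}=\mu_{ij}$. The $XX$--twist is then \emph{derived} via \eqref{lem_muxy_condition}, giving
\[
\muxx_{ij}\;=\;\dfrac{1}{\muxy_{ij}}\cdot\dfrac{\ssi_i\ssi_j(\tt_j)}{\ssi_j(\tt_j)},
\]
which is the whole content of the proposition: the $\muxx_{ij}$ obtained this way generically lies in $\LLt\setminus\bk$. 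Your construction, with $\muxx_{ij}$ taken scalar, instead yields a nonscalar $\muxy_{ij}$ and therefore does \emph{not} reproduce the FH defining relations; the claim that relations \eqref{equ_C-construction-relations_two} ``literally coincide'' with the FH ones is false.

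This mismatch also breaks your verification of \eqref{equ_mu_conditions_two}. With $\muxx_{ij}\in\bk$ both left-hand sides equal the constant $\mu_{ij}^{\,2}$, while the right-hand sides are nonconstant ratios of shifted $\tt$'s; these identities do not ``collapse to the FH consistency relation'' but rather impose an extra constraint that most FH data do not satisfy. The paper's route is to first establish the identities \eqref{equ_identities_TGWA} in the RTGW setting and then observe that, once $\muxy_{ij}$ is scalar (hence $\Si$-invariant), these identities become exactly FH's conditions (1.2) and (1.3); the relation $X_iX_j=\muxx_{ij}X_jX_i$ is then recovered from \cite[Example~2.8]{FH}. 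To fix your argument you should swap the roles of $\muxx$ and $\muxy$ in your dictionary and drop the assertion that $\vmu$ is scalar.
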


\begin{proof}
First of all we show that for $i,j,k\in\Jsq$ pairwise different, and any $\ssi\in\Si$, the following equalities are true:
\begin{align}\label{equ_identities_TGWA}
\begin{array}{c}
\ssi\is\ssi_{\!j}(\tt_{i}\tt_{\!j})=\ssi_{\!j}(\muxy_{i\!j})\ssi_{i}(\muxy_{ji})\,\bt_{i}\bt_{\!j};
\\
\dfrac{\ssi\is(\tt\js)\ssi\ks(\tt\js)}{\tt\js\ssi\is\ssi\ks(\tt\js)}
=\dfrac{\ssi_{\!j}\mo(\muxx_{ij})\,\muxy_{ij}} {\ssi_{\!k}\!\left(\ssi_{\!j}\mo(\muxx_{ij})\,\muxy_{ij}\right)}
\,\text{and}\,\,
\dfrac{\ssi\is(\bt\js)\ssi\ks(\bt\js)}{\bt\js\ssi\is\ssi\ks(\bt\js)}
=\dfrac{\muxx_{ij}\ssi_{\!j}(\muxy_{ij})} {\ssi_{\!k}\!\left(\muxx_{ij}\ssi_{\!j}(\muxy_{ij})\right)}.
\end{array}
\end{align}

To proof \eqref{equ_identities_TGWA}, by \eqref{equ_mu_conditions_one} and \eqref{lem_muxy_condition}, we get:\;
$\ssi_{\!j}(\muxy_{i\!j})\cdot\ssi_{i}(\muxy_{ji})= \dfrac{\ssi_j(\bt_i)}{\bt_i}\muxx_{ij}\cdot\dfrac{\ssi_i(\bt_j)}{\bt_j}\muxx_{ji}=
\dfrac{\ssi\is\ssi_{\!j}(\tt_{i}\tt_{\!j})}{\bt_{i}\bt_{\!j}}$,
and
$\dfrac{\ssi\is(\tt\js)\ssi\ks(\tt\js)}{\tt\js\,\ssi\is\ssi\ks(\tt\js)}
=\dfrac{\ssi\is(\tt\js)}{\tt\js} : \ssi_{\!k}\!\left( \dfrac{\ssi\is(\tt\js)}{\tt\js}\right)
=\dfrac{\muxy_{ij}}{\ssi_{\!j}\mo(\muxx_{ji})}\cdot{\ssi_{\!k}\!\left(\dfrac{\ssi_{\!j}\mo(\muxx_{ji})}{\muxy_{ij}}\right)}
=\dfrac{\ssi_{\!j}\mo(\muxx_{ij})\,\muxy_{ij}} {\ssi_{\!k}\!\left(\ssi_{\!j}\mo(\muxx_{ij})\,\muxy_{ij}\right)}$.
Last equality is obtained from the previous, using the involution $*$, or directly.

\vspace{0.1cm}

We consider a TGW algebra $\mathcal{A}=\mathcal{A}_{\mu}(R,\sigma,t)$ defined in \cite{FH}, definitions 2.2 and 2.3,
where $R$ is an unital associative $\Bbbk$-algebra,
$\mu=(\mu_{ij})_{i{\ne}j}$ is a parameter $n{\times}n$ matrix without diagonal with $\mu_{ij}\in\Bbbk^{\times}$,
$\sigma: \bZ^n \to\Aut_{\Bbbk}(R)$ is a group homomorphism,
$\Sigma=\langle\sigma_1,\ldots,\sigma_n\rangle$ is an abelian group,
$t$ is a function $t:\{1,\ldots,n\}\to Z(R)$.

Assume that the ring $R$ contains all non-constant indecomposable dividers of the elements $\ssi(t_i)$ and their inverses for any $1\leqslant{i}\leqslant{n}$, $\sigma\in\Sigma$. Possibly, $R$ is some extension of the polynomial ring.

We put $\muxy_{ij}=\mu_{ij}\in\Bbbk^{\times}$, $\muxy_{ii}=\sigma_i(t_i)/t_i\in{R}$.
Since by the assumption of \cite{FH}, the group $\Si$ acts on $\mu_{ij}$ identically,
then the relations \eqref{equ_identities_TGWA} for the RTGW algebra take the form
(1.2) and (1.3) from \cite{FH} for the TGW algebra.

In TGW algebra $\mathcal{A}$, we define the parameters $\muxx_{ij}$ by the formulas
$\muxx_{ij} =\dfrac{1}{\muxy_{ij}}\cdot\dfrac{\ssi_i\ssi_j(\tt_j)}{\ssi_j(\tt_j)}$,
these parameters do not have to belong to the field $\bk$.
Using Example 2.8, \cite{FH}, we obtain equality $X\is X\js=\muxx_{ij} X\js X\is$,
this is consistent with Lemma \ref{lem_mu_equalities_RTGWA}.
\end{proof}

Note that of the examples given in the third section, the unitary algebra is actually a TGW algebra, while the orthogonal algebra is not.

\smallskip~

\subsection{Some properties of RTGW algebra}

\smallskip~
\smallskip~

Let $\fA=\Alg_{\!\gq,\gp}\big(\gS,\bG,\Si,\ft,\vmu\big)$, $1<\gq\leq{\gp}$, be RTGW algebra corresponding the storey partition
$\Jsp=\Jsq\cup\,{\le{\gq{+}1,\gp}\ra}= {{J}_{\!1}} \cup\ldots\cup\,{{J}_{r{+}1}}$,  ${J}_{r{+}1}={\le{\gq{+}1,\gp}\ra}$,
see \eqref{equ_storey_partition} in definition \ref{def_storey_partition}.

\smallskip~

\begin{proposition}\label{prp_center_RTGW}
Let $\fA$ be RTGW algebra.
If $\gq=\gp$ then the center of $\fA$ equals the base field $\bk$.
For the case $\gq<\gp$, denote by $\bar{\La}=\bk[\la_{\gq{+}1},\ldots,\la_{\gp}]\subset\La$ the subring,
and by $\bar{\gS}=\gS\cap\bar{\La}$ the multiplicative set.
Then the localized ring $\bar{\LL\!}=(\bar{\gS})\mo\bar{\La}$ coincides with the center of $\fA$.
\end{proposition}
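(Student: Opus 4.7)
The plan is to exploit the $\bZ^\gq$-grading and the crossed-product structure established in Proposition \ref{prop_RTGWA_crossprod}. By Corollary \ref{cor_elements_of_A} every element $\gF\in\fA$ has a unique normal form $\gF=\sum_{\bww\in\bgM}\ff_\bww\bww$ with $\ff_\bww\in\LL$. I would first use centrality against the coefficient ring and then against the generators $\ux\is,\uy\is$.

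Step 1 (reduction to $\LL$). Since $\bgM\simeq\bZ^{\!\gq}$, each bimonomial $\bww$ has a well-defined $\bZ^\gq$-degree $\alpha(\bww)$, and the iterated application of \eqref{equ_C-construction-relations_two} yields $\bww\,\ff=\ssi^{\alpha(\bww)}(\ff)\,\bww$ for every $\ff\in\LL$, where $\ssi^\alpha=\prod_i\ssi\is^{\alpha_i}$. For a central $\gF$ the identity $\ff\gF=\gF\ff$ and Corollary \ref{cor_elements_of_A} give $\ff_\bww\bigl(\ssi^{\alpha(\bww)}(\ff)-\ff\bigr)=0$ for every $\bww$ and every $\ff\in\LL$. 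Since $\fA$ is a domain (Proposition \ref{prop_RTGWA_crossprod}), either $\ff_\bww=0$ or $\ssi^{\alpha(\bww)}$ fixes $\LL$ pointwise; but $\ssi^{\alpha}(\la\js)=\la\js-\alpha_j$ for $j\in\Jsq$, so the latter forces $\alpha(\bww)=0$, i.e.\ $\bww=\un$. Hence $\gF\in\LL$.

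Step 2 ($\Si$-invariance). Now $\gF\in\LL$ commutes with $\ux\is$ iff $\ssi\is(\gF)=\gF$, and similarly for $\uy\is$. So the center lies in $\LL^\Si$, the fixed subring of the shift group $\Si=\langle\ssi\is\rangle_{i\in\Jq}$.

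Step 3 (computing $\LL^\Si$). This is the main technical step. Write an invariant element as an irreducible fraction $\gF=f/g$ in $\La$. Fix $i\in\Jsq$; invariance under $\ssi\is$ together with irreducibility and the UFD property of $\La$ force $\ssi\is(g)$ and $g$ to be associates in $\La$. Comparing leading coefficients in $\la\is$ gives $\ssi\is(g)=g$, and a standard shift argument (a polynomial in $\la\is$ fixed by $\la\is\mapsto\la\is-1$ over a field of characteristic zero is constant in $\la\is$) yields that $g$, and likewise $f$, is independent of $\la\is$. Repeating for each $i\in\Jsq$ gives $f,g\in\bar\La=\bk[\la_{\gq+1},\ldots,\la_\gp]$. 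It remains to see that $g\in\bar\gS$: from $\gF\in\LL$ we can write $\gF=f'/g'$ with $g'\in\gS$; reduction to $f/g$ implies $g$ divides $g'$ in $\La$, and saturation of $\gS$ (Definition \ref{def_isd_mult_set}) forces every non-constant divisor of $g'$ to lie in $\gS$, hence $g\in\gS\cap\bar\La=\bar\gS$ and $\gF\in\bar\LL$. In the case $\gq=\gp$ the ring $\bar\La$ reduces to $\bk$, giving $\gF\in\bk$.

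Step 4 (converse). Any $\la\js$ with $j>\gq$ is fixed by every $\ssi\is$, $i\in\Jsq$, hence commutes with $\ux\is$ and $\uy\is$ by \eqref{equ_C-construction-relations_two}; since $\LL$ is commutative, such $\la\js$ also commute with all of $\LL$. Thus every generator of $\bar\LL$ is central, and so $\bar\LL\subseteq Z(\fA)$. Combined with Steps 1--3 this yields $Z(\fA)=\bar\LL$ (and $Z(\fA)=\bk$ when $\gq=\gp$). The only delicate point in the whole argument is the rational shift-invariance analysis of Step 3, in particular the passage from $\ssi\is(g)\sim g$ to $\ssi\is(g)=g$ and the use of saturation to keep the reduced denominator inside $\bar\gS$.
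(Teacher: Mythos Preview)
Your argument is correct and follows exactly the paper's strategy: first use commutation with the $\la\is$ (equivalently, the $\bZ^\gq$-grading) to force $\gF\in\LL$, then use commutation with the $\ux\is$ to obtain $\ssi\is$-invariance, and finally identify the $\Si$-fixed part of $\LL$ with $\bar\LL$. The only difference is one of explicitness: the paper disposes of your Step~3 in a single sentence (``this is not possible for a rational element from $\LL\setminus\bar\LL$ depending on $i$''), whereas you supply the UFD/associates/leading-coefficient argument and invoke saturation of $\gS$ to keep the reduced denominator in $\bar\gS$. That extra detail is genuinely useful, since the paper's assertion tacitly relies on precisely this reasoning.
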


\begin{proof}
Clearly, the localized ring $\bar{\LL\!}$ belongs to the center of the algebra $\fA$,
because the variables $\la_{\gq{+}1},\ldots,\la_{\gp}$ commute with all elements of $\fA$.

Suppose that some element $\gF$ from $\fA$, represented as $\gF={\osum}_{\bww\in\bgM}\ff_{\bww}\bww\in\LL\!\bgM$, belongs to the center of $\fA$.
Then, in particular, any summand $\ff_{\bww}\bww$ must commute with each generator $\la\is$.
If $\bww$ has a multiplier $\ux\is^k$, $k>0$, then $\ff_{\bww}\bww\cdot\la\is=(\la\is-k)\ff_{\bww}\bww$, which is impossible.
For the case $\bww=1$ and $\ff_{\bww}\not\in\bar{\LL\!}$ we obtain from the required condition $\ff_{\bww}\ux\is=\ux\is\ff_{\bww}$ that $\ff_{\bww}=\ssi\is(\ff_{\bww})$ for any $i=1,\ldots,\gq$, but this is not possible for a rational element from $\LL{\setminus}\bar{\LL\!}$
depending on $i$.
\end{proof}

There is a chain of RTGW subalgebras
\begin{align}\label{equ_flag}
\fA_{\vec{J}_{1}} \subset \fA_{\vec{J}_{2}} \subset \ldots \subset \fA_{\vec{J}_{r}} \subset \fA_{\vec{J}_{r{+}1}} =\fA,
\end{align}
where for each $t=1,\ldots,r{+}1$, subalgebra $\fA_{\vec{J}_{t}}$ is RTGW algebra with respect to the sub partition
$$\vec{J}_{t}={{J}_{\!1}} \cup\ldots\cup\,{{J}_{t}}
={\le{1;\gq_1}\ra}\cup\, {\le{\gq_1{+}1;\gq_2}\ra} \cup\ldots\cup\,{\le{\gq_{t{-}1}{+}1,\gq_t}\ra},$$
generated by the elements $\ux\is,\uy\is$, $i\in\vec{J}_{t{-}1}$ and $\la\is$, $i\in\vec{J}_{t}$ (we assume $\vec{J}_{0}=\varnothing$).
In particular, $\fA_{\vec{J}_{1}}=\LL_1\subset\LL$ is a localization of $\La_1=\bk[\la_1,\ldots,\la_{\gq_1}]$ corresponding to the first part of partition.

By Proposition \ref{prp_center_RTGW}, the center of algebra $\fA_{\vec{J}_{t{+}1}}$ equals $\LL_{t{+}1}$ that is a localization of polynomial ring $\La_{t{+}1}=\bk[\la_{\gq_t{+}1},\ldots,\la_{\gq_{t{+}1}}]$ with the multiplicative set $\gS_{t{+}1}=\gS\cap\La_{t{+}1}$.

\vspace{0.2cm}

For any $t=1,\ldots,r{+}1$ denote by $\gK_{\!\vec{J}_{t}}$ center of the algebra $\fA_{\vec{J}_{t}}$.
We denote by $\gK$ the subalgebra of $\fA$ generated by the centers $\gK_{\!\vec{J}_{t}}$ of all subalgebras $\fA_{\vec{J}_{t}}$,
$t=1,\dots,r{+}1$, it can be viewed as a rational analogue of the Gelfand-Zeitlin subalgebra of $\fA$.
For the case under consideration, the subalgebra $\gK$ coincides with the localized ring $\LL$,
it is a maximal commutative subalgebra of $\fA$.
Indeed, for any reduced element $\gF={\osum}_{\bww\in\bgM}\ff_{\bww}\bww\in\LL\!\bgM\setminus\LL$, and any $i\in\Jsp$ we have
$$[\la\is,\gF]={\osum}_{\bww\in\bgM}\ff_{\bww}[\la\is,\bww]={\osum}_{\bww\in\bgM}\ff_{\bww}(\la\is-\ssi_{\bww}(\la\is))\bww.$$
This sum equals zero only if $\la\is-\ssi_{\bww}(\la\is)=0$ which means $\bww$ does not depend on either $\ux\is$ or $\uy\is$ for any $i\in\Jsq$.
But this means that $\gF\in\LL$, which contradicts assumption.

The problem of determining the Gelfand-Zeitlin subalgebra and the maximal commutative subalgebra in the case
of an invariant subalgebra $\fA^{\!\bG}\subset\fA$ is somewhat more difficult to solve.

\subsubsection{The invariant subring $\La^{\!\!\bG}\subset\La$}

\smallskip~
\smallskip~

Now we describe the invariant subring $\La^{\!\!\bG}\subset\La$ of the ring $\La$ with respect to the action of the group $\bG=\bS\times\langle*\rangle$.
Follow the fundamental Theorem on symmetric polynomials, any polynomial of the invariant subring $\La^{\!\bS}$ is representable in a unique way
as a polynomial over $\bk$ in the elementary symmetric polynomials for all parts of the partition separately.

First of all note that $(\la\is-\ot\zeta\is)^*=-(\la\is-\ot\zeta\is)$ for any $i\in\Jsp$.
For the part $\J_{t}$ of the partition of $\Jsp$, let $\glambda_t=\{\la\is\}_{i\in\J_{t}}$.
Then $\bk[\glambda_t]^{\bS}$ is generated by symmetric polynomials in variables $\glambda_t$,
as well as symmetric polynomials in variables $\la\is-\ot\zeta\is$, $i\in\J_{t}$.
Denote by $\ue_\alpha(\glambda_t)$ the elementary symmetric polynomials in the variables $\{\la\is-\ot\zeta\is\}_{i\in\J_{t}}$, $\alpha=1,\ldots,|\J_{t}|$.
For any $\pi\in\bS$ and any polynomial $\ff$ in $\ue_\alpha(\glambda_t)$, $\alpha=1,\ldots,|\J_t|$, either or $\ff^{\pi}=\ff$ or $\ff^{\pi}=-\ff$.

The following statement is obvious, it describes the situation in the case when $*$ acts on the field $\bk$ identically.

\begin{remark}
Let $*|_\bk=\id_\bk$.
Any polynomial of the invariant subring $\La^{\!\bG}$ is a $\bk$-linear combination of products of elementary symmetric polynomials $\ue_\alpha(t)$ for all parts $\J_t$ of a given partition of $\Jsp$, and each such product contains an even number of factors satisfying $\alpha\equiv{1}\!\!\mod\!\!(2)$.
\end{remark}

\vspace{0.1cm}

For the exceptional case $\bk=\bC$, $\imath^*=-\overline{\imath}$, the product $\imath(\la\is-\ot\zeta\is)$ is an invariant of the involution $*$.
Let $\hat{\ue}_\alpha(\glambda_t)=\imath^{\alpha}\,\ue_\alpha(\glambda_t)$ denotes the $\alpha$-th elementary symmetric polynomial in variables $\imath(\la\is-\ot\zeta\is)$, $i\in\J_{t}$.
It is easy to check the following statement

\begin{remark}
Let $\bk=\bC$, $\imath^*=-\overline{\imath}$. The invariant subring $\La^{\!\bG}\subset\La$ is generated over $\bk$
by the elementary symmetric polynomials $\hat{\ue}_\alpha(t)$ for all parts $\J_t$ of a given partition of $\Jsp$.
\end{remark}

\vspace{0.1cm}

\begin{remark}
Denote $\LL^{\!\!\bG}\subset\LL$ the invariant subring, $\La^{\!\!\bG}\subset\LL^{\!\!\bG}$.
By the definition \ref{def_diskr_def}, the defining polynomial $\ttd$ is multi symmetric with respect to the given partition of $\Jp$,
and $\ttd^*=\ttd$, hence $\ttd\in\La^{\!\bG}$, moreover $\ttd^{\ssi}\cdot\ttd^{\ssi\mo} \in\La^{\!\bG}$ for any $\ssi\in\Si$.

It is easy to show that any element from $\LL^{\!\!\bG}$ can be represented as a sum of finite products of a type:\,\,\,
$F=\ff\oprod_{\ssi\in\Si}\dfrac{1}{\ttd^{\ssi}\,\ttd^{\ssi\mo}},\,\, \text{where}\,\, \ff\in\La^{\!\!\bG}$.
(We assume $\ff$ is not divisible by $\ttd^{\ssi}\ttd^{\ssi\mo}$ without remainder for any $\ssi$,
although this fraction can be reduced.)
\end{remark}

\vspace{0.1cm}

\subsubsection{The maximal commutative subalgebra of $\fA^{\!\bG}$}

\smallskip~
\smallskip~

Let $\fA=\Alg_{\!\gq,\gp}\big(\gS,\bG,\Si,\ft,\vmu\big)$, $1<\gq\leqslant{\gp}$, be RTGW algebra
corresponding to the partition
$\Jsp=\Jsq\cup\,{J}_{r{+}1}= {{J}_{\!1}} \cup\ldots\cup\,{{J}_{r{+}1}}$,  ${J}_{r{+}1}={\le{\gq{+}1,\gp}\ra}$.
Denote by $\bar{\La}=\bk[\la_{\gq{+}1},\ldots,\la_{\gp}]\subset\La$ the subring, and by $\bar{\gS}=\gS\cap\bar{\La}$ the multiplicative set.
As shown above, the localized ring $\bar{\LL\!}=\bar{\gS}\mo\bar{\La}$ is the center of the algebra $\fA$.

\begin{proposition}\label{prp_invariant_subalgebra}
Let $\fA^{\!\bG}\subset\fA$ be the invariant subalgebra.
The following conditions are satisfied.

\begin{enumerate}
\item\label{it_invariant_subalgebra_a}
If $\mathfrak{B}\subset\fA^{\!\bG}$ is a subalgebra under the condition $\La^{\!\!\bG}\subset\mathfrak{B}\nsubseteq \LL^{\!\!\bG}$,
then $\mathfrak{B}$ is non commutative.

\item\label{it_invariant_subalgebra_b}
The invariant subalgebra $\LL^{\!\!\bG}$ is a maximal commutative subalgebra of $\fA^{\!\bG}$.
\end{enumerate}
\end{proposition}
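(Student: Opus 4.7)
The entire proposition reduces to a single separation statement about $\La^{\bG}$: for every nontrivial element $\ssi$ of the shift group $\Si$ there exists $a \in \La^{\bG}$ with $\ssi(a) \neq a$. Once this is in hand, both (a) and (b) follow by the same commutator computation, using the normal form of Corollary \ref{cor_elements_of_A} and the identification $\bgM \cong \bZ^{\otimes\gq} \cong \Si$ that matches a bimonomial $\bww$ with its associated shift $\ssi_{\bww}$.

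To prove the separation statement I would write $\ssi = \ssi_1^{m_1}\cdots\ssi_\gq^{m_\gq}$ with some $m_j \neq 0$, and let $J_t$ be the part of the storey partition containing $j$. Since $\zeta_i$ is constant on $J_t$, the shifted quantities $\la_i - \ot\zeta_i$, $i \in J_t$, are genuine indeterminates over $\bk$, and the elementary symmetric polynomials $\ue_\alpha(\glambda_t)$ for $\alpha = 1,\ldots,|J_t|$ determine the multiset $\{\la_i - \ot\zeta_i : i \in J_t\}$ uniquely. The two remarks preceding the proposition give $\ue_\alpha(\glambda_t)^2 \in \La^{\bG}$ in the real case and $\hat\ue_\alpha(\glambda_t) = \imath^\alpha \ue_\alpha(\glambda_t) \in \La^{\bG}$ in the complex case. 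Arguing by contradiction, if $\ssi$ fixes all such invariants, then in the complex case one obtains $\ssi(\ue_\alpha) = \ue_\alpha$ directly, while in the real case $\ssi(\ue_\alpha) = \pm \ue_\alpha$ with the sign forced to $+$ because a shift preserves the leading monomial of a nonzero polynomial. In either case the shifted multiset $\{\la_i - \ot\zeta_i - m_i : i \in J_t\}$ would coincide with the original, forcing $m_i = 0$ for every $i \in J_t$, contradicting the choice of $j$.

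For part (b), I would take $\gF \in \fA^{\bG}$ commuting with $\LL^{\bG}$ and write $\gF = \sum_{\bww \in \bgM} \ff_{\bww} \bww$ in reduced form. Commuting with an arbitrary $a \in \La^{\bG}$ gives
$$0 = [a,\gF] = \sum_{\bww} \ff_{\bww}\bigl(a - \ssi_{\bww}(a)\bigr)\bww,$$
and by the linear independence of bimonomials together with the domain property of $\LL$ (Proposition \ref{prop_RTGWA_crossprod}), $\ff_{\bww} \neq 0$ implies that $\ssi_{\bww}$ fixes $\La^{\bG}$ pointwise. The separation statement then gives $\ssi_{\bww} = \id$, hence $\bww = 1$, hence $\gF = \ff_1 \in \LL \cap \fA^{\bG} = \LL^{\bG}$. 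For part (a), if $\gF \in \mathfrak{B} \setminus \LL^{\bG}$, then $\gF \notin \LL$ (otherwise $\gF \in \fA^{\bG} \cap \LL = \LL^{\bG}$), so some $\bww \neq 1$ occurs with $\ff_{\bww} \neq 0$; applying the separation statement to $\ssi_{\bww}$ produces $a \in \La^{\bG} \subset \mathfrak{B}$ with $\ssi_{\bww}(a) \neq a$, and the same commutator expansion shows $[a,\gF] \neq 0$. Since $[a,\gF] \in \mathfrak{B}$, the subalgebra is non-commutative.

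The main obstacle is the separation step: $\La^{\bG}$ is cut out by the large group $\bG = \bS \times \langle * \rangle$, and it is not a priori clear that its invariants are still rich enough to detect the smaller action of $\Si$ on unrestricted variables. Once one recognises that squares of the elementary symmetric polynomials (or their $\imath^\alpha$-twists in the complex case) automatically lie in $\La^{\bG}$, the remaining subtlety is the sign ambiguity produced by the squaring; this is eliminated by the leading-monomial observation, after which the uniqueness of a multiset from its elementary symmetric functions closes the argument. Everything else is formal manipulation with the normal form.
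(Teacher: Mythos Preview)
Your argument is correct and takes a genuinely different route from the paper. The paper works one part $J_t$ of the storey partition at a time and uses only the first two elementary symmetric polynomials as test elements: for a bimonomial $\bww$ with exponent vector $(\bar k_i)_{i\in J_t}$ it computes explicitly
\[
[\gamma_1(\glambda_t),\bww]=\bar k\,\bww,\qquad
[\gamma_2(\glambda_t),\bww]=\Bigl(\textstyle\sum_{i\in J_t}(\bar k-\bar k_i)\la_i-\gamma_2(\bar k_i)\Bigr)\bww,
\]
where $\bar k=\sum_{i\in J_t}\bar k_i$, so that the vanishing of both commutators forces first $\bar k=0$ and then all $\bar k_i$ equal, hence all zero; it then observes that commuting with the invariants $\bar\ue_1,\bar\ue_2$ is equivalent to commuting with $\gamma_1,\gamma_2$. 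You instead formulate an abstract separation property for $\La^{\bG}$ against the shift group and prove it with the \emph{entire} family $\ue_\alpha$ (via $\hat\ue_\alpha$ in the complex case and $\ue_\alpha^{\,2}$ in the real case), appealing to uniqueness of a multiset from its elementary symmetric functions together with a leading-term argument to eliminate the sign ambiguity produced by squaring. The trade-off is this: the paper's calculation is shorter and needs only two test elements per part, but it implicitly uses $\bar\ue_1\in\La^{\bG}$, which fails when $*$ acts trivially on $\bk$ (there $\ue_1^{\,*}=-\ue_1$, and only its square is invariant); your version is slightly more elaborate but treats the real and complex involutions uniformly and avoids this issue.
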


\begin{proof}
By corollary \eqref{cor_elements_of_A}, any element $\gF\in\fA$ takes the form $\gF={\osum}_{\bww\in\bgM}\ff_{\bww}\bww\in\LL\!\bgM$.
Let $\gF\not\in\fA\setminus\LL$, then $\ell\in\LL$ commutes with $\gF$ if and only if any monomial included in the sum with a nonzero coefficient commutes with $\ell$.

Given  monomial $\ww=\uz_1^{k_1}\cdots\uz_\gq^{k_\gq}\in\bgM$ with $\uz_i\in\{\ux_i,\uy_i\}$ we denote $\bar{k}_i=k_i$ if $\uz_i=\ux_i$ and $\bar{k}_i=-k_i$ if $\uz_i=\uy_i$.
Then for any $\ff\in\bk[\la_1,\ldots,\la_r]$ we get $\ww\ff=\ssi_{\ww}(\ff)\ww$ with $\ssi_{\ww}=\ssi_{1}^{\bar{k}_1}\cdots\ssi_{r}^{\bar{k}_r}$.

For the part $\J_{t}$ of the partition of $\Jsp$, denote by $\gamma_{\alpha}(\glambda_t)$ the $\alpha$-th elementary symmetric polynomials in the variables $\{\la\is\}_{i\in\J_{t}}$, $\alpha=1,\ldots,|\J_{t}|$.
Besides, we denote $\bar{\ue}_\alpha(\glambda_t)=\hat{\ue}_\alpha(\glambda_t)$ for the case $\bk=\bC$ and $\imath^*=-\imath$ and $\bar{\ue}_\alpha(\glambda_t)={\ue}_\alpha(\glambda_t)$ otherwise.
Evidently, any monomial $\ww\in\bgM$ commutes with polynomials $\bar{\ue}_1(\glambda_t)$ and $\bar{\ue}_2(\glambda_t)$ iff it commutes
with $\ga_{1}(\glambda_t)$ and $\ga_{2}(\glambda_t)$.

Let $\ww=\uz_1^{k_1}\cdots\uz_\gq^{k_\gq}\in\Mon(\cZ)$.
For $t$-th part of the given partition,
we assume $\J_t=\{i,\ldots,j\}$, and denote $\bar{k}=\bar{k}_i+\cdots+\bar{k}_j$, $\ga_{r}=\ga_{r}(\glambda_t)$.
We get
\begin{align*}
\begin{array}{crlcl}
{[}\,\ga_{1},\,\ww\,{]} &{\!=\!}& \big(\ga_{1}{-}\ssi_{i}^{\bar{k}_{i}}\cdots\ssi_{j}^{\bar{k}_{j}}(\ga_{1}) \big)\ww &{\!=\!}& (\bar{k}_i+\cdots+\bar{k}_j)\ww=\bar{k}\ww,
\\
{[}\,\ga_2,\,\ww\,{]} &{\!=\!}& \big(\ga_2{-}\ssi_{i}^{\bar{k}_i}\cdots\ssi_{j}^{\bar{k}_j}(\ga_2)\big)\,\ww
&{\!=\!}& \big((\bar{k}-\bar{k}_i)\la_i +\ldots +(\bar{k}-\bar{k}_j)\la_j\big)\ww{-}\gamma_2(\bar{k}_i,\ldots,\bar{k}_j) \ww.
\end{array}
\end{align*}
This implies, if ${[}\ga_{1}\!,\ww\,{]}={[}\ga_2,\ww\,{]}=0$, then $k_i=\ldots=k_j=0$,
because $\bar{k}=0$ and $\bar{k}=\bar{k}_i=\ldots=\bar{k}_j$.

Since this reasoning can be repeated for any part of the partition, excluding the last one, then as a result we obtain:\, an arbitrary monomial $\ww\in\Mon(\cZ)$ can commute only with the elements from subalgebra $\bar{\La}^{\!\bG}=\bk[\la_{\gq{+}1},\ldots,\la_{\gp}]^{\!\bG}\subset\La$.
Therefore $\mathfrak{B}\subseteq \LL^{\!\!\bG}$ in the contradiction with assumption.

The first statement of Proposition is done, and the second follows obviously from the first.
\end{proof}

\begin{remark}\label{rem_invariant_subalgebra}
Note that the subalgebras $\bar{\La}^{\!\bG} =\bar{\La}\cap\La^{\!\bG}$ and $\Quot(\bar{\La}^{\!\bG})\cap\LL^{\!\!\bG}$
belong to the center of $\fA^{\!\bG}$ since the variables $\la_{\gp{+}1},\ldots,\la_\gq$ commute with any element of $\bgM$.
\end{remark}

\vspace{0.1cm}

\section{Implementation of RTGW algebras on specific examples}

\vspace{0.2cm}

In this section we will prove that the class of RTGW algebras is big enough to contain the enveloping algebras of unitary ans orthogonal algebras of small rank.

\subsection{The enveloping algebras of special linear and unitary Lie algebras}

\smallskip~
\smallskip~

Let $\op{gl}_n(\bC)$ be simple finite-dimensional complex Lie algebra of $n\times n$ matrices,
$\op{U}(\op{gl}_n(\bC))$  be its universal enveloping algebra and $\Gamma(\op{gl}_n(\bC))$ its Gelfand-Zetlin subalgebra (\cite{DOF}, \cite{Maz-ort}).
We will denote by $E_{ij}$ standard matrix units.

\subsubsection{The standard matrix generators of algebras $\sl(3;\bC)$ and $\su(3;\bC)$}

\smallskip~
\smallskip~

Let $\Mat_{3\times 3}(\bC)$ be a space of matrix with basis given by the matrix units $E_{kl}\in M_{3\times 3}(\bC)$,$1\leqslant k,l\leqslant 3$,
whose $(k,l)$-entry is one, and all other entries are zeros.
The elements $E_{kl}$, $k\ne l$, and $E_{11}-E_{22},E_{22}-E_{33}$ are considered as generators
of the matrix special linear Lie algebra $\sl(3,\bC)=\left\{M\in \operatorname{gl}(3;\bC):\operatorname {tr} (M)=0\right\}$ and of its universal enveloping algebra $\op{U}(\sl(3,\bC))$.

\smallskip~

In order to define the generators of the complex unitary Lie algebra $\su(3;\bC)$, we use the concept of so-called Gell-Mann matrices
$F_i\in\Mat_{3\times 3}(\bC)$, $i=1,\ldots,8$ \cite{TU}, developed by Murray Gell-Mann, there is a set of eight linearly independent
$3\times 3$ traceless Hermitian matrices that span the Lie algebra $\su(3;\bC)$ of the $\operatorname{SU}(3)$ group
and the universal enveloping algebra $\op{U}(\su(3,\bC))$.

\begin{lemma}\label{lem_Gell_Mann_matrix}
Gell-Mann matrices belonging to $\Mat_{3\times 3}(\bC)$ are given by the presentation
\begin{gather}\label{equ_Gell_Mann_matrix}
\begin{array}{llll}
F_1{=}E_{21}{+}E_{12}, & F_2{=}\imath(E_{21}{-}E_{12}), & F_3{=}E_{11}{-}E_{22}, & F_4{=}E_{31}{+}E_{13},
\\
F_5{=}\imath(E_{31}{-}E_{13}), & F_6{=}E_{32}{+}E_{23}, & F_7{=}\imath(E_{32}{-}E_{23}),
& F_8{=}\dfrac{1}{\sqrt{3}}(E_{11}{+}E_{22}{-}2E_{33}).
\end{array}
\end{gather}
Elements $\{2\mo F_i \mid 1\leqslant i\leqslant 8\}$ satisfy anti-commutation relations
$$\begin{aligned}\left[F_{a}, F_{b}\right]&=2\imath{\sum} _{c}\,f^{abc}F_{c} \end{aligned}\quad
\text{with the structure constants}\quad
\begin{aligned}f^{abc}&=-\imath{\frc{}{\!4}}\operatorname{tr} (F_{a}[F_{b},F_{c}]),\end{aligned}$$
they are the generators of complex unitary Lie algebra $\su(3;\bC)$.
\end{lemma}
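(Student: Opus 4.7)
The plan is a direct verification split into three self-contained steps, each reducing to arithmetic with the matrix-unit product rule $E_{ij}E_{kl}=\delta_{jk}E_{il}$.

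First, I would confirm that the eight matrices $F_1,\ldots,F_8$ listed in \eqref{equ_Gell_Mann_matrix} are Hermitian, traceless and $\bR$-linearly independent. Hermiticity is immediate from $E_{ij}\st=E_{ji}$ together with the placement of $\imath$ on the antisymmetric combinations; tracelessness is either obvious (for the off-diagonal $F_a$) or built into the diagonal matrices $F_3$ and $F_8$ by construction; independence is visible from the supports of the non-zero entries. Because the real vector space of traceless anti-Hermitian $3{\times}3$ complex matrices has dimension $8$, the family $\{\imath F_a\}_{a=1}^{8}$ is automatically an $\bR$-basis of $\su(3;\bC)\subset\Mat_{3\times 3}(\bC)$, which justifies the last sentence of the lemma.

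Second, for the $\binom{8}{2}=28$ brackets $[F_a,F_b]$ I would expand each product into matrix units, cancel using $E_{ij}E_{kl}=\delta_{jk}E_{il}$, and recognise the resulting traceless Hermitian matrix back in the basis $\{F_c\}_{c=1}^{8}$; the coefficients so read off are, by definition, the numbers $f^{abc}$. This is the routine computational core of the lemma, and the only place where one must take care, since the scalar factors $\imath$ and $\tfrac{1}{\sqrt{3}}$ have to be tracked correctly through each bracket. Since the statement only asserts the existence of constants $f^{abc}$ and an intrinsic formula for them, no explicit tabulation is required.

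Finally, the trace formula is obtained by combining the bracket relations with the orthogonality identity $\operatorname{tr}(F_a F_b)=2\delta_{ab}$, which is itself a one-line matrix-unit computation. Multiplying $[F_b,F_c]=2\imath\sum_d f^{bcd}F_d$ by $F_a$ on the left and taking the trace yields $\operatorname{tr}\bigl(F_a[F_b,F_c]\bigr)=4\imath f^{bca}$; invariance of $(f^{abc})$ under cyclic permutation of its indices, which follows from cyclicity of the trace together with Hermiticity of the $F_a$, gives $f^{bca}=f^{abc}$, producing the identity $f^{abc}=-\tfrac{\imath}{4}\operatorname{tr}\bigl(F_a[F_b,F_c]\bigr)$ claimed in the lemma. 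I do not anticipate a conceptual obstacle: the content is a reformulation of the classical Gell-Mann realisation of $\su(3;\bC)$, and the only real care is bookkeeping through the $28$ bracket computations.
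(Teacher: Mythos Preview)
Your proposal is correct and well organised; the three-step verification (basis property, bracket expansion, trace formula via the orthogonality $\operatorname{tr}(F_aF_b)=2\delta_{ab}$) is exactly how one would justify this lemma from scratch.

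The paper, however, does not supply a proof at all: it states the lemma as a recall of the classical Gell-Mann presentation, with a reference to \cite{TU}, and immediately proceeds to the subsequent Remark. So there is no argument to compare against; you are simply providing the routine verification that the paper omits as well known. One minor simplification you might note: in your third step you can avoid the detour through the cyclic invariance of $f^{abc}$ by using trace cyclicity directly on the triple product, namely $\operatorname{tr}\bigl(F_a[F_b,F_c]\bigr)=\operatorname{tr}\bigl([F_a,F_b]F_c\bigr)=2\imath\sum_d f^{abd}\operatorname{tr}(F_dF_c)=4\imath f^{abc}$, which yields the stated formula in one line.
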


\begin{remark}
Along with the standard involution of an algebra $\gl(n,\bC)$, which is a complex conjugate transpose,
the algebra $\su(3;\bC)$ is also endowed with an involution defined by equalities
$$F\is^*=F\is,\,\, i=1,2,6,7,\qquad F\is^*=-F\is,\,\, i=3,4,5,8,$$
which can be easily verified directly.
Therefore, $(\imath{F\is})^*=\imath{F\is}$ for $i=3,4,5,8$.
\end{remark}


For the examples below, we explicitly indicate the value of the parameters from definition \ref{def_RTGWA_general} of RTGW algebra
satisfying the conditions \eqref{equ_mu_commutant_two_new}, \eqref{equ_C-construction-relations_two_xy}.
The proposed construction and its justification can be stated for an arbitrary rank,
but, in order to avoid extensive calculations, later in this section we consider in detail only the cases of rank three.


\subsubsection{Structure datum}

\smallskip~
\smallskip~

Let $\gq=3$, $\gp=6$. We consider the RTGW datum $\big(\gS,\bG,\vsigma,\ft,\vmu\big)$ (see definition \ref{def_RTGW_datum}) that have the following specific features:

\mat
we denote by $\Jsq=\{\small\text{{11,\,21,\,22}}\}$ and $\Jsp=\{\small\text{{11,\,21,\,22,\,31,\,32,\,33}}\}$ the sets of double indices given in ascending order under the storey partition
$\Jsp=J_1\cup J_2\cup J_3=\{\small\text{{11}}\} \cup \{\small\text{{21,\,22}}\}\cup\{\small\text{{31,\,32,\,33}}\}$.

\mat
let $\La=\bC[\,\glambda\,]^{\gzeta}$ be an involuted polynomial ring with
$\glambda=\{\la_\tki\}_{\tki\in\Jp}$, $\gzeta=\{\gzeta_\tki\}_{\tki\in\Jp}$
where $\gzeta_{11}{=}0$, $\gzeta_{21}{=}\gzeta_{22}{=}1$, $\gzeta_{3j}{=}0$, $j=1,2,3$;
an involution $*\in\Aut_{\bR}(\La)$ acts on $\La$ by the rule: $\la_\tki^*=\bbla_\tki=\zeta_\tki-\la_\tki$, besides,  $(a{+}\imath{b})^*=a{-}\imath{b}$, $a,b\in\bR$;

\mat
let $\Si\in\Aut_{\bC}(\La)$ be an abelian shift group, $\Si=\langle\ssi_\tki\rangle_{\tki\in\Jp} \simeq \bZ^{\otimes{6}}$
acting on $\La$ as:\, $\ssi_\tki(\la_\tki)=\la_\tki-1$,\, $\ssi_\tki(\la_\tmj)=\la_\tmj$, $\tmj\ne\tki$,
in addition, $(\ssi(\ff))^*=\ssi\mo(\ff^*)$, $\ssi\in\Si$, $\ff\in\La$;

\mat
let $\cZ=\{\ux_{11},\uy_{11},\ux_{21},\uy_{21},\ux_{22},\uy_{22}\}$ be a linearly ordered set of variables in descending order,
let $\gM_{\!\cZ}$ be an involuted monoid with $*:\ux_\tki\longleftrightarrow\uy_\tki$, $\tki\in\Jsq$;

\mat
let $\bS=\bS_{1}\times\bS_{2}\times\bS_{3}\subset\Aut(\La)$, where
$\bS_1=E$, $\bS_2= \sym\{\la_{21},\la_{22}\}$, $\bS_3= \sym\{\la_{31},\la_{32},\la_{33}\}$,
and $\bG=\bS\times\langle*\rangle \subset \Aut(\La)$;

\mat
an arbitrary transposition from $\bS$ corresponds to one of the following pairs of indices:
$\small\text{\{21,22\}}$,\, $\small\text{\{31,32\}}$,\, $\small\text{\{31,33\}}$,\, $\small\text{\{32,33\} }$,
it acts on the sets of variables $\glambda$, $\cX$, $\cY$, $\ssi$ and so on simultaneously by permuting these indices.

For the convenience of writing formulas, we introduce the notation $\bbla_\tki:=\la^*_\tki$, $\tki\in\Jsp$.

\vspace{0.1cm}

\subsubsection{The localization of $\La$ and the invariant subring}

\begin{definition}\label{def_rg_uni_formulae}
For different $\tki,\tmj\in\Jsp$ we take
$$\rg_{\tki,\tmj} = \la_\tki{+}\bbla_\tmj,\quad \text{if}\,\, |k-m|\leqslant 1,\qquad \text{for}\,\, \{i,j\}=\{1,2\};$$
and $\rg_{\tki,\tmj}=1$ for all other pairs.
\end{definition}

We denote $\brg_{\tki,\tmj}= \rg_{\tki,\tmj}^*$.
Then
\begin{align}\label{equ_brg_from_rg}
\begin{array}{lll}
\brg_{\tki,\tmj} &=& 1 - \rg_{\tki,\tmj} =-\ssi_\tki(\rg_{\tki,\tmj})=-\ssi_\tmj\mo(\rg_{\tki,\tmj}),\quad \text{if}\;\; |k-m|=1;
\\
\brg_{\tki,\tkj} &=& \phantom{1} -\ssi_\tki^2(\rg_{\tki,\tkj}),\quad \text{if}\;\; i\ne j.
\end{array}
\end{align}

Indeed, we have $\rg_{\tki,\tmj} +\brg_{\tki,\tmj} =\zeta_k+\zeta_m=1$, $|k-m|=1$, and $\rg_{\tti,\ttj}+\brg_{\tti,\ttj}=0$,
therefore $\brg_{\tki,\tmj}= 1 - (\la_\tki{+}\bbla_\tmj) =-\ssi_\tki(\la_\tki{+}\bbla_\tmj)=-\ssi_\tmj\mo(\la_\tki{+}\bbla_\tmj)$.

\vspace{0.1cm}

We consider the set
\begin{align}\label{equ_Om_prime_diskr_three}
\Om:=\big\{ \rg_{11,21},\rg_{11,22},\rg_{21,22}\big\} \bigcup {\bigcup}_{\substack{\,i=1,2,\, j=1,2,3 }}\big\{\rg_{2i,3j} \big\}
\bigcup\{\pm 1\} \subset\La.
\end{align}

\vspace{0.1cm}

Let $\gS=\gS(\Om^{\Si})$ be a $\Si$-stable multiplicative closure of the generating set $\Om$.
It follows from \eqref{equ_brg_from_rg}, $\brg_{\tki,\tmj}\in\gS$ for any $\tki, \tmj\in\Jsp$.
Then $\gS$ is the saturated, involuted, double multiplicative set,
the localization $\bL=\gS^{\!-\!1}\La$ is a partial localization with respect to the partition of $\Jsp$.

\begin{lemma}\label{lem_invariant_subring_su}
Invariant subring $\La^{\!\bG}\subset\La$, considered as a polynomial algebra, is generated by the elementary symmetric $*$-invariant polynomials
in $\glambda$ over $\bC$:
\begin{gather}\label{equ_gamma_sl_def_uni}
\begin{split}
\ga_1=\ga_1^{(1)}=\la_{11},\quad \ga_2=\ga_2^{(1)}=
\la_{21}{+}\la_{22}{-}1,\quad \ga_2^{(2)}=(\la_{21}-\ot)(\la_{22}-\ot),\\
\ga_3=\ga_3^{(1)}=\la_{31}+\la_{32}+\la_{33},\quad \ga_3^{(2)}=\la_{31}\la_{32}+\la_{32}\la_{33}+\la_{31}\la_{33},\quad \ga_3^{(3)}=\la_{31}\la_{32}\la_{33}.
\end{split}
\end{gather}
There is $\La^{\!\!\bG}=[\imath\ga_1^{(1)}, \imath\ga_2^{(1)}, \ga_2^{(2)}, \imath\ga_3^{(1)}, \ga_3^{(2)}, \imath\ga_3^{(3)}]\subset\La$.
\end{lemma}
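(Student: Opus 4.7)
The plan is to compute $\La^{\!\bG}$ in two stages: first identify $\La^{\!\bS}$ by the Fundamental Theorem on symmetric polynomials, then cut out the $*$-fixed part by a degree-parity argument.

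First I would apply the Fundamental Theorem blockwise. Since $\bS=\bS_{1}\times\bS_{2}\times\bS_{3}$ acts independently on the three disjoint variable groups $\glambda_t=\{\la_{\tki}\}_{\tki\in J_t}$, the ring $\La^{\!\bS}$ is freely generated over $\bC$ by the elementary symmetric polynomials in each block. Because the affine change of variables $\la_{\tki}\mapsto\mu_{\tki}:=\la_{\tki}-\ot\zeta_{\tki}$ is $\bS$-equivariant and invertible, one can equivalently use the elementary symmetric polynomials $\ue_{\alpha}(\glambda_t)$ in the shifted variables $\{\mu_{\tki}:\tki\in J_t\}$ as the free generators. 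Writing these out block by block, using $\zeta_{11}=0$, $\zeta_{21}=\zeta_{22}=1$, and $\zeta_{3j}=0$, reproduces precisely the six polynomials $\ga_1^{(1)},\ga_2^{(1)},\ga_2^{(2)},\ga_3^{(1)},\ga_3^{(2)},\ga_3^{(3)}$ listed in \eqref{equ_gamma_sl_def_uni}, which is the first assertion of the lemma.

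Next I would track the involution. From $\la_{\tki}^{*}=\zeta_{\tki}-\la_{\tki}$ one has $\mu_{\tki}^{*}=-\mu_{\tki}$, and therefore $\ue_{\alpha}(\glambda_t)^{*}=(-1)^{\alpha}\ue_{\alpha}(\glambda_t)$. Combined with $\imath^{*}=-\imath$ on the scalars this gives $\bigl(\imath^{\alpha}\ue_{\alpha}(\glambda_t)\bigr)^{*}=\imath^{\alpha}\ue_{\alpha}(\glambda_t)$, so each $\hat{\ue}_{\alpha}(\glambda_t):=\imath^{\alpha}\ue_{\alpha}(\glambda_t)$ lies in $\La^{\!\bG}$. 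In the present setting these six elements are exactly $\imath\ga_1^{(1)}, \imath\ga_2^{(1)}, \ga_2^{(2)}, \imath\ga_3^{(1)}, \ga_3^{(2)}, \imath\ga_3^{(3)}$, establishing one inclusion.

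Finally I would prove the reverse inclusion by expanding an arbitrary $f\in\La^{\!\bG}\subset\La^{\!\bS}$ uniquely as a $\bC$-polynomial in the $\ue_{\alpha}(\glambda_t)$: $f=\sum_{I} c_{I}\prod_{(t,\alpha)}\ue_{\alpha}(\glambda_t)^{n^{I}_{t,\alpha}}$. Applying $*$ multiplies the $I$-th monomial by $(-1)^{d(I)}$ with $d(I):=\sum n^{I}_{t,\alpha}\alpha$ and conjugates the coefficient $c_{I}$. Demanding $f^{*}=f$ forces $c_{I}\in\imath^{d(I)}\bR$ for every $I$ (independence of the monomials in $\La^{\!\bS}$ ensures the conditions decouple), and absorbing one factor of $\imath$ into each $\ue_{\alpha}$ at the appropriate power exhibits $f$ as an $\bR$-polynomial in the $\hat{\ue}_{\alpha}(\glambda_t)$. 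This yields the claimed presentation $\La^{\!\bG}=\bR[\imath\ga_1^{(1)},\imath\ga_2^{(1)},\ga_2^{(2)},\imath\ga_3^{(1)},\ga_3^{(2)},\imath\ga_3^{(3)}]$. The step requiring the most care is the last one, where one must be sure that the unique-expansion statement in $\La^{\!\bS}$ really allows one to read off the scalar $c_{I}$ monomial-by-monomial; this is a consequence of the Fundamental Theorem giving algebraic independence of the $\ue_{\alpha}(\glambda_t)$, but it is the only non-bookkeeping ingredient in the argument.
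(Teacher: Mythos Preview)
Your argument is correct and follows essentially the same route as the paper's proof: both reduce to the shifted variables $\la_{\tki}-\tfrac{1}{2}\zeta_{\tki}$, invoke the Fundamental Theorem on symmetric polynomials blockwise, and use the observation that $\imath(\la_{\tki}-\tfrac{1}{2}\zeta_{\tki})$ is $*$-fixed. The only difference is the order in which you take invariants (you do $\bS$ first, then $*$, while the paper passes to the $*$-fixed variables $\imath(\la_{\tki}-\tfrac{1}{2}\zeta_{\tki})$ first and then applies the symmetric-function theorem over $\bR$); your treatment of the reverse inclusion via the parity argument is in fact more explicit than what the paper writes out.
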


\begin{proof}
Since $\bS$ is the direct sum of the symmetric groups according the storey partition of $\Jsp$, the invariant subring $\La^{\!\bS}\subset\La$ is generated by elementary symmetric polynomials in $\glambda$ respectively with a partition.
Any polynomial in $\glambda$ is $*$-invariant if it belongs to the subring $\bC[\imath(\la_\tki{-}\zeta_\tki{\frc{}{\!2}})]_{\,\tki\in\Jp}$ since $\big(\imath(\la_\tki{-}\zeta_\tki{\frc{}{\!2}})\big)^*=\imath(\la_\tki{-}\zeta_\tki{\frc{}{\!2}})$.
Therefore, given $\bG=\bS\times\langle*\rangle$, we can conclude that $\La^{\!\bG}$ is a ring of symmetric polynomials
separately for each of the groups of variables
$\imath\la_{11}$, $\imath (\la_{2i}{-}\ot)$, $i=1,2$, and $\imath\la_{3i}$, $i=1,2,3$.
\end{proof}


\begin{remark}
Notice, that $\ttd$ is multi symmetric with respect to the given partition of $\Jp$ and $\ttd^*=\ttd$, hence $\ttd\in\La^{\!\!\bG}$,
moreover $\ttd^{\ssi}\cdot\ttd^{\ssi\mo} \in\La^{\!\!\bG}$ for any $\ssi\in\Si$.
Denote $\LL^{\!\!\bG}\subset\LL$ the invariant subring.
Any element from $\LL^{\!\!\bG}$ is a finite product of the elements a type:\,\,\,
$F=\oprod_{\ssi\in\Si}\dfrac{1}{\ttd^{\ssi}\,\ttd^{\ssi\mo}}\cdot\ff,\,\, \text{where}\,\, \ff\in\La^{\!\!\bG}.$
\end{remark}

\vspace{0.1cm}

\subsubsection{The structure parameters $\ft$, $\vmu$}

\smallskip~
\smallskip~

\begin{definition}\label{den_tt_bt}
We define the elements $\tt_\tki,\bt_\tki\in\LLt$ as follows:
\begin{align}\label{def_tt_bt_uni}
\begin{array}{ll}
\tt_{11}=\rg_{11,21}\rg_{11,22}, & \bt_{11}=\brg_{11,21}\brg_{11,22};
\\
\tt_{2i}=\dfrac{\rg_{2i,11}\, {\prod}_{k=1}^{3}\rg_{2i,3k}}{\rg_{2i,2j}\,\ssi_\tti(\rg_{\tti,\ttj})}, &
\bt_{2i}=\dfrac{\brg_{2i,11}\, {\prod}_{k=1}^{3}\brg_{2i,3k}}{\brg_{2i,2j}\,\ssi\mo_\tti(\brg_{\tti,\ttj})},\quad
\{i,j\}=\{1,2\}.
\end{array}
\end{align}

By \eqref{equ_brg_from_rg}, $\bt_\tki = \ssi_\tki(\tt_\tki)$,
and $\ssi(\tt_\tki),\ssi(\bt_\tki)\in\LLt$ for any $\tki\in\Jsq$, $\ssi\in\Si$.
\end{definition}

To determine the matrix $\vmu$, we put: $\muxy_{\tki,\tmj}=\muyx_{\tki,\tmj}=1$\,
for all\, $\tki,\tmj\in\Jsq$,\, $\tki\ne\tmj$.
Thereafter, we use equalities \eqref{lem_muxy_condition} to determine the remaining parameters as follows:
for any $\tki,\tmj\in\Jq$ with $|k-m|\leqslant1$, $\tki\ne\tmj$, we take
\begin{gather}\label{equ_mu_tt_dependence_uni}
\begin{split}
\muxx_{\tki,\tmj} = \dfrac{\ssi\mo_{\tmj}(\tt_{\!\tki})}{\tt_{\!\tki}} =\dfrac{\bt_{\!\tki}}{\ssi_{\tmj}(\bt_{\!\tki})} =\dfrac{\ssi_{\!\tki}(\bt_{\tmj})}{\bt_{\tmj}} =\dfrac{\tt_{\tmj}}{\ssi\mo_{\!\tki}(\tt_{\tmj})},
\qquad
\dfrac{\tt_\tki}{\ssi_\tmj(\tt_\tki)} = \dfrac{\bt_\tmj}{\ssi\mo_\tki(\bt_\tmj)}.
\end{split}
\end{gather}
\noindent
Besides, $\muyy_{\tki,\tmj}=\big(\muxx_{\tki,\tmj}\big)\st$,
$\muxx_{\tki,\tmj}\muxx_{\tmj,\tki}=\muxx_{\tki,\tmj}\muyy_{\tki,\tmj}=1$ for any indexes $\tki,\tmj$.

\begin{corollary}
The obtained parameter values
$\muxx_{\tki,\tmj}{=}-\dfrac{\brg_{\tki,\tmj}}{\rg_{\tki,\tmj}}{=}-\dfrac{\bbla_\tki+\la_\tmj}{\la_\tki+\bbla_\tmj}$,\,
$\tki,\tmj\in\Jp$,\, $\tki{\ne}\tmj$,\, $|k{-}m|=1$, and
$\muxx_{21,22}{=} -\dfrac{\rg_{21,22}}{\brg_{21,22}}{=}-\dfrac{\la_{21}+\bbla_{22}}{\bbla_{21}+\la_{22}}$
satisfy \eqref{equ_mu_conditions_one}, \eqref{equ_mu_conditions_two}.
\end{corollary}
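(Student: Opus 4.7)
The plan is to proceed in two stages: first, derive the claimed explicit formulas for $\muxx_{\tki,\tmj}$ from \eqref{def_tt_bt_uni} by unfolding the first equality in \eqref{equ_mu_tt_dependence_uni}; second, verify that these formulas satisfy the conditions \eqref{equ_mu_conditions_one} and \eqref{equ_mu_conditions_two}.

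For the first stage, compute $\ssi\mo_\tmj(\tt_\tki)/\tt_\tki$ case by case. In each cross-storey case $|k-m|=1$, the only factor of $\tt_\tki$ touched by $\ssi\mo_\tmj$ is $\rg_{\tki,\tmj}$, so the ratio reduces to $\ssi\mo_\tmj(\rg_{\tki,\tmj})/\rg_{\tki,\tmj}$, and the first clause of \eqref{equ_brg_from_rg} rewrites this as $-\brg_{\tki,\tmj}/\rg_{\tki,\tmj}$. The in-storey pair $\{21,22\}$ is genuinely different: the denominator of $\tt_{21}$ carries both $\rg_{21,22}$ and $\ssi_{21}(\rg_{21,22})$, but the common-storey equality $\zeta_{21}=\zeta_{22}=1$ forces $\ssi_{21}(\rg_{21,22})=\ssi\mo_{22}(\rg_{21,22})$, so one factor cancels in the quotient and leaves $\rg_{21,22}/\ssi_{21}^2(\rg_{21,22})$; the second clause of \eqref{equ_brg_from_rg} then converts this into $-\rg_{21,22}/\brg_{21,22}$.

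The conditions \eqref{equ_mu_conditions_one} are straightforward to verify from the explicit formulas. The involution identity $(\muxx_{\tki,\tmj})\mo=(\muxx_{\tki,\tmj})^*$ is immediate because each $\muxx$ has the form $\pm\brg/\rg$ and $\rg^*=\brg$, so the starred expression is the reciprocal. The shift condition $\ssi_\ell(\muxx_{\tki,\tmj})=\muxx_{\tki,\tmj}$ for $\ell\notin\{\tki,\tmj\}$ is clear since $\muxx_{\tki,\tmj}$ involves only $\la_\tki$ and $\la_\tmj$; the joint invariance under $\ssi_\tki\ssi_\tmj$ holds because both $\rg_{\tki,\tmj}$ and $\brg_{\tki,\tmj}$ are affine in the single combination $\la_\tki-\la_\tmj$, which is preserved by the combined shift.

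The main obstacle is \eqref{equ_mu_conditions_two}. After substituting $\muxx_{\tki,\tmj}=\ssi\mo_\tmj(\tt_\tki)/\tt_\tki$ and simplifying, the first equation reduces to $\tt_\tki/\ssi_\tmj(\tt_\tki)=\bt_\tmj/\ssi\mo_\tki(\bt_\tmj)$ and the second to $\ssi\mo_\tmj(\muxx_{\tki,\tmj})=\ssi_\tki(\tt_\tmj)/\tt_\tmj$; both are encoded in the fourfold chain of equalities \eqref{equ_mu_tt_dependence_uni}. Hence it suffices to show that the four candidate expressions for $\muxx_{\tki,\tmj}$ in \eqref{equ_mu_tt_dependence_uni} coincide, which for cross-storey pairs follows from the same one-factor argument as in stage one. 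The genuinely delicate case is the in-storey verification of $\ssi\mo_{22}(\tt_{21})/\tt_{21}=\tt_{22}/\ssi\mo_{21}(\tt_{22})$; I would handle it by writing both $\rg_{21,22}$ and $\rg_{22,21}$ as affine functions of $\la_{21}-\la_{22}$ and tracking the two pairs of shifted factors on each side, exploiting the $\bS$-symmetry between indices $21$ and $22$ guaranteed by $\zeta_{21}=\zeta_{22}$.
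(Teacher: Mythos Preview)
Your proposal is correct and follows the natural route the paper implicitly relies on: the Corollary is stated in the paper without proof, immediately after the chain of equalities \eqref{equ_mu_tt_dependence_uni}, and your two stages---computing $\ssi\mo_\tmj(\tt_\tki)/\tt_\tki$ factor by factor via \eqref{equ_brg_from_rg}, then reducing \eqref{equ_mu_conditions_two} to the remaining identities in \eqref{equ_mu_tt_dependence_uni}---are exactly how one fills in those details. Your handling of the in-storey pair $\{21,22\}$ (using $\ssi_{21}(\rg_{21,22})=\ssi\mo_{22}(\rg_{21,22})$ to produce a telescoping cancellation) and your reduction of the two equations in \eqref{equ_mu_conditions_two} to the second displayed identity and the third link of the chain in \eqref{equ_mu_tt_dependence_uni} are both sound; one small remark is that your phrase ``each $\muxx$ has the form $\pm\brg/\rg$'' is literally the wrong way round for $\muxx_{21,22}$, but since the involution-equals-inverse argument works identically for $-\rg/\brg$, nothing is lost.
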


\smallskip~

\subsubsection{The subalgebra $\fA_\sl\subset\fA$}

\smallskip~
\smallskip~

In this subsection, given the RTGW algebra $\fA=\Alg_{3,6}\big(\gS,\bG,\vsigma,\ft,\vmu\big)$,
we establish an isomorphism between the universal enveloping algebra $\op{U}(\sl(3,\bC))$ of the matrix special linear Lie algebra $\sl(3;\bC)$
and a subalgebra $\fA_\sl$ of the Lie algebra $\fA$, defined below.

\smallskip~

\begin{lemma}\label{lem_Eij_Aij_correspond}
We denote by $\fA'\subset\fA$ a $\bC$-span of the following elements:
\begin{gather}\label{equ_def_sl_XY}
\begin{split}
\begin{array}{ll}
\gX_1=\ux_{11},\,\,\,\gY_1=\uy_{11}, &\gX_2=\ux_{21}+\ux_{22},\,\,\, \gY_2=\uy_{21}+\uy_{22},
\\
\gX_{3}=[\gX_{2},\gX_{1}] =\vrho_1\,\ux_{21}\ux_{11}+\vrho_2\,\ux_{22}\ux_{11}, &
\gY_{3}=[\gY_{1},\gY_{2}] = \vrho_1\,\uy_{11}\uy_{21}+\vrho_2\,\uy_{11}\uy_{22},
\\
\gH_1=2\ga_1-\ga_2,\,\, \gH_2=2\ga_2-\ga_1-\ga_3, & \gH_3=\ga_3-\ga_1-\ga_2=-\gH_1-\gH_2,
\\
\end{array}
\end{split}
\end{gather}
where
$\vrho_i=(1-\muxx_{11,2i})=(1-\muyy_{2i,11})=(\la_{11}+\bbla_{2i})^{-1}$.
There hold
\begin{gather}\label{equ_mu_from_eta}
\muxx_{11,2i}=\muyy_{2i,11}=-\dfrac{\vrho_i}{\vrho_i^*},\quad\quad
\muyy_{11,2i}=\muxx_{2i,11}=-\dfrac{\vrho_i^*}{\vrho_i}.
\end{gather}

\smallskip~

In this case, the multiplication table relative to the Lie brackets is as follows:

$$\begin{tabular}{r|rrrrrrrrrrr} 
\hline
$[\,,\,]$ & $\gX_1$ & $\gY_1$ & $\gX_2$ & $\gY_2$ & $\gX_3$ & $\gY_3$ & $\gH_1$ & $\gH_2$  & $\phantom{I^{I^I}}$ \\
\hline
$\gY_1$ & $\gH_{1}$ & $0\;$ & $0\;$ & $\gY_3$ & $-\gX_2$ & $0\;$ & $ 2\gY_1$ & $-\gY_1$ & $\phantom{I^{I^I}}$
\\
$\gX_1$ & $~$ & $-\gH_{1}$ & $-\gX_3$ & $0\;$ & $0\;$ & $\gY_2$ & $ -2\gX_1$ & $\gX_1$ & $\phantom{I^{I^I}}$
\\
$\gY_2$ & $~$ & $~$ & $\gH_{2}$ & $0\;$ & $\gX_1$ & $0\;$ & $-\gY_2$ & $2\gY_2$ & $\phantom{I^{I^I}}$
\\
$\gX_2$ & $~$ & $~$ & $~$ & $-\gH_{2}$ & $0\;$ & $-\gY_1$ & $\gX_2$ & $-2\gX_2$ & $\phantom{I^{I^I}}$
\\
$\gY_3$ & $~$ & $~$ & $~$ & $~$ & $\gH_3$ & $0\;$ & $ \gY_3$ & $\gY_3$ & $\phantom{I^{I^I}}$
\\
$\gX_3$ & $~$ & $~$ & $~$ & $~$ & $~$ & $-\gH_3$ & $-\gX_3$ & $-\gX_3$ & $\phantom{I^{I^I}}$
\\
\end{tabular}$$

\smallskip~

The elements $\gH_i,\gX_i,\gY_i$ are invariants of the group $\bS$ for $i=1,2$, while $\gX_3,\gY_3$ are anti-invariants:
$(\gX_3)^{\pi_{21,22}}=-\gX_3$.

The involution $*$ acts on the generator by the formulae:
\begin{align}\label{equ_action_involution_su}
(\gH_i)^*=\gH_i,\quad (\gX_1)^*=\gY_1,\quad (\gX)_2^*=\gY_2,\quad (\gX_3)^*=-\gY_3.
\end{align}

Moreover, we take $(\gX_1)^*=\gY_1$, $(\gX_2)^*=\gY_2$, and obtain $(\gX_3)^*=-\gY_3$.

Then $\fA'\subset\fA$ is an involuted subalgebra generated over $\bC$ by the elements $\gX_{1},\, \gY_{1},\, \gX_{2},\, \gY_{2}$.
\end{lemma}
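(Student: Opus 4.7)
The proof will proceed by direct computation in the associative algebra $\fA$, using the defining relations \eqref{equ_C-construction-relations_two}, \eqref{equ_mu_commutant_two_new}, together with the explicit parameter values \eqref{def_tt_bt_uni}, \eqref{equ_mu_tt_dependence_uni}. My plan is to split the verification into four independent blocks, in the order listed below.

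First, I would record closed-form expressions for the scalars appearing in the generators. A direct substitution of $\bbla_{11}=-\la_{11}$ and $\bbla_{2i}=1-\la_{2i}$ into $\muxx_{11,2i}=-(\bbla_{11}+\la_{2i})/(\la_{11}+\bbla_{2i})$ (from the Corollary following \eqref{equ_mu_tt_dependence_uni}) telescopes to $1-\muxx_{11,2i}=(\la_{11}+\bbla_{2i})^{-1}=\vrho_i$, which establishes \eqref{equ_mu_from_eta} upon applying $*$ and using $\vrho_i^*=(\bbla_{11}+\la_{2i})^{-1}$. Then the formula $\gX_3=[\gX_2,\gX_1]=\vrho_1\ux_{21}\ux_{11}+\vrho_2\ux_{22}\ux_{11}$ follows by writing $[\ux_{2i},\ux_{11}]=(1-\muxx_{11,2i})\ux_{2i}\ux_{11}$; the formula for $\gY_3$ is obtained analogously (or by the involution $*$).

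Second, I would verify the bracket table entry by entry. The diagonal pairs $[\gY_i,\gX_i]=\gH_i$ are the core computations. For $i=1$, $[\gY_1,\gX_1]=\tt_{11}-\ssi_{11}(\tt_{11})=(\la_{11}+1-\la_{21})(\la_{11}+1-\la_{22})-(\la_{11}-\la_{21})(\la_{11}-\la_{22})=2\la_{11}-\la_{21}-\la_{22}+1=\gH_1$. For $i=2$, I would use $\muxy_{\tki,\tmj}=1$ for $\tki\ne\tmj$ in $J_2$ to get $[\gX_2,\gY_2]=\sum_{i=1,2}(\bt_{2i}-\tt_{2i})$; the non-trivial rational expressions in $\tt_{2i},\bt_{2i}$ should then telescope to $2(\la_{21}+\la_{22}-1)-\la_{11}-(\la_{31}+\la_{32}+\la_{33})=\gH_2$, using the precise shape of the numerators and denominators in \eqref{def_tt_bt_uni}. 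The entry $[\gY_3,\gX_3]=\gH_3=-\gH_1-\gH_2$ is the most delicate; I would compute $[\gY_3,\gX_3]$ by expanding both $\gX_3,\gY_3$ as sums of length-two bimonomials and using the $\muxx,\muxy$ relations, and then identify the resulting expression with $\gH_3$ after telescoping. All off-diagonal brackets such as $[\gY_1,\gY_2]=\gY_3$ or $[\gY_1,\gX_2]=0$ reduce to simple applications of $\muxy_{ij}=1$, $\uy_i\ff=\ssi_i^{-1}(\ff)\uy_i$, and the explicit forms.

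Third, the brackets with the Cartan elements $\gH_1,\gH_2$ are straightforward: since $\gH_i\in\La$, the bracket $[\gH_i,\gX_j]$ equals $(\gH_i-\ssi_{\gX_j}(\gH_i))\gX_j$ where $\ssi_{\gX_j}$ is the associated shift, giving integer coefficients that reproduce the table. This part is essentially the root-space decomposition encoded through the shift group $\Si$.

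Fourth, for the group action and involution, invariance of $\gH_i,\gX_i,\gY_i$ ($i=1,2$) under $\bS$ is immediate from symmetry of the defining sums under the relevant transpositions, while the transformation rule for $\gX_3,\gY_3$ is checked by tracing how $\pi_{21,22}$ acts on $\vrho_i$ and the bimonomials $\ux_{2i}\ux_{11}$. The identities $(\gX_1)^*=\gY_1$, $(\gX_2)^*=\gY_2$ follow from the definition of $*$ on $\gM_{\!\cZ}$, and $(\gX_3)^*=-\gY_3$ is then obtained by applying $*$ to $[\gX_2,\gX_1]=\gX_3$ and using $(\ww\uu)^*=\uu^*\ww^*$ (the minus sign reflecting that $*$ reverses the commutator). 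Closure of $\fA'$ under the Lie bracket and involution then follows from the table and these identities.

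The main obstacle will be the telescoping identity underlying $[\gY_2,\gX_2]=\gH_2$ and, even more so, $[\gY_3,\gX_3]=\gH_3$, because $\tt_{2i}$ and $\bt_{2i}$ are genuine rational functions in the $\rg_{2i,3k}$ and $\rg_{21,22}$; the cancellation that reduces their differences to a linear polynomial in $\glambda$ is the heart of the computation and is where the precise form of the parameters \eqref{def_tt_bt_uni} is used. The other steps are bookkeeping with the skew-commutation relations.
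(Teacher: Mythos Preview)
Your plan is essentially the paper's own proof: direct verification of the bracket table, with the hardest entry $[\gY_3,\gX_3]=\gH_3$ handled exactly as you outline (the paper expands into length-two bimonomials, shows the cross terms $[\vrho_1\uy_{11}\uy_{21},\vrho_2\ux_{22}\ux_{11}]$ vanish, and telescopes the diagonal terms $-\sum_i\vrho_i\vrho_i^*(\tt_{11}\tt_{2i}-\bt_{11}\bt_{2i})$ to $\gH_3$).

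One correction to your last block: in this paper the involution on $\gM_{\!\cZ}$ is \emph{multiplicative}, $(\ww\uu)^*=\ww^*\uu^*$, not anti-multiplicative. Consequently $[A,B]^*=[A^*,B^*]$, and your argument as written would yield $(\gX_3)^*=([\gX_2,\gX_1])^*=-[\gY_2,\gY_1]=+\gY_3$, the wrong sign. The correct reasoning is $(\gX_3)^*=[\gX_2^*,\gX_1^*]=[\gY_2,\gY_1]=-\gY_3$, the minus coming from swapping the commutator arguments; alternatively, the paper computes directly that $(\vrho_i\uy_{11}\uy_{2i})^*=\vrho_i^*\ux_{11}\ux_{2i}=\vrho_i^*\muxx_{11,2i}\ux_{2i}\ux_{11}=-\vrho_i\ux_{2i}\ux_{11}$ using $\vrho_i^*\muxx_{11,2i}=-\vrho_i$ from \eqref{equ_mu_from_eta}. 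This is a local fix and does not affect your overall strategy.
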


\smallskip~

\begin{proof}
The proof of the lemma consists in a direct verification of the multiplication equalities.
We will only mention a few more complicated ones.
We take into account the fact that the elements $\uy_\tki$ and $\ux_\tmj$ with different indices commute to each other.
First, just check that

$${[}\gY_{\!1},\gX_{1}{]}{=}\tt_{11}{-}\bt_{11}{=}\gH_1,\quad
{[}\gY_{2},\gX_{2}{]} = \tt_{21}{+}\tt_{22}{-}\bt_{21}{-}\bt_{22} = \gH_2,\quad
{[}\gY_{\!1},\gY_{2}{]} = \gY_3,\quad {[}\gX_{1},\gX_{2}{]} = -\gX_3.$$

\clearpage

Next, we obtain some auxiliary statements, proceeding in steps.

\begin{step}\label{step_02}
For $i=1,2$ the following consistency conditions hold:
\begin{gather}\label{equ_basic_products_two}
\begin{split}
\begin{array}{rcrccrcr}
\vrho_i\,\uy_{11}\uy_{2i}\cdot\vrho_i\,\ux_{2i}\ux_{11} &{=}&  -\vrho_i\vrho_i^*\cdot\tt_{11}\tt_{2i};
&&
\vrho_i\,\ux_{2i}\ux_{11}\cdot\vrho_i\,\uy_{11}\uy_{2i} &{=}&  -\vrho_i\vrho_i^*\cdot\bt_{11}\bt_{2i};
\\
\vrho_1\,\uy_{11}\uy_{21}\!\cdot\!\vrho_2\,\ux_{22}\ux_{11} &{=}& \vrho_1\vrho_2\tt_{11}\!\cdot\!\uy_{21}\ux_{22};
&&
\vrho_2\,\ux_{22}\ux_{11}\!\cdot\!\vrho_1\,\uy_{11}\uy_{21} &{=}& \vrho^*_1\vrho^*_2\bt_{11}\!\cdot\!\uy_{21}\ux_{22}.
\end{array}
\end{split}
\end{gather}
\end{step}

\begin{proof}
We calculate $\vrho_i^*=1-\muxx_{2i,11}=(\bbla_{11}{+}\la_{2i})^{-1}$. Then

$\begin{array}{llll}
\vrho_i\,\uy_{11}\uy_{2i}\,\vrho_i\,\ux_{2i}\ux_{11} &= \vrho_i\,\ssi\mo_{11}\ssi\mo_{2i}(\vrho_i)\muyy_{11,2i}\tt_{11}\tt_{2i} =-\vrho_i\vrho_i^*\tt_{11}\tt_{2i};
\\
\vrho_1\,\uy_{11}\uy_{21}\cdot\vrho_2\,\ux_{22}\ux_{11}
&= \vrho_1\,\ssi\mo_{11}\ssi\mo_{21}(\vrho_2)\ssi\mo_{11}\ssi\mo_{21}(\muxx_{22,11})\tt_{11}
\uy_{21}\ux_{22} = \vrho_1\vrho_2\tt_{11}\cdot\uy_{21}\ux_{22};
\\
\vrho_2\,\ux_{22}\ux_{11}\cdot\vrho_1\,\uy_{11}\uy_{21}
&= \vrho_2\,\ssi_{11}\ssi_{22}(\vrho_1)\muxx_{22,11}\bt_{11}\uy_{21}\ux_{22}
= \vrho^*_1\vrho^*_2\bt_{11}\cdot\uy_{21}\ux_{22}.\\
\end{array}$
\end{proof}

\begin{step}\label{step_03}
The equality $[\gY_3,\gX_3] = \gH_3$ is fulfilled, which follows from the relations:

\noindent
1) ${\sum}_{i=1,2}[\vrho_i\,\uy_{11}\uy_{2i},\,\vrho_i\ux_{2i}\ux_{11}]{=}-\gH_3,$\quad
2)\, ${[}\vrho_1\,\uy_{11}\uy_{21},\,\vrho_2\,\ux_{22}\ux_{11}{]} = {[}\vrho_2\,\uy_{11}\uy_{22},\,\vrho_1\,\ux_{21}\ux_{11}{]}{=}0.$
\end{step}

\begin{proof}
1)
$\sum_{i=1,2}{[}\vrho_i\,\uy_{11}\uy_{2i},\,\vrho_i\,\ux_{2i}\ux_{11}{]}
\overset{\eqref{equ_basic_products_two}}{=}-\sum_{i=1,2}\vrho_i\vrho_i^*\big(\tt_{11}\tt_{2i}-\bt_{11}\bt_{2i}\big)= \ga_3-\ga_2-\ga_1 =\gH_3$.
\begin{gather*}
\begin{array}{llll}
2)\quad {[}\vrho_1\,\uy_{11}\uy_{21},\,\vrho_2\,\ux_{22}\ux_{11}{]}
&= \big(\vrho_1\,\ssi\mo_{11}\ssi\mo_{21}(\vrho_2)\ssi\mo_{11}\ssi\mo_{21}(\muxx_{22,11})\tt_{11}
-\vrho_2\,\ssi_{11}\ssi_{22}(\vrho_1)\muxx_{22,11}\bt_{11}\big)\uy_{21}\ux_{22}
\\
&= \big(\vrho_1\vrho_2\tt_{11}-\vrho^*_1\vrho^*_2\bt_{11}\big)\cdot\uy_{21}\ux_{22} =0.
\end{array}
\end{gather*}

Then we have
$[\gY_3,\gX_3] =\osum_{i=1,2}[\vrho_i\,\uy_{11}\uy_{2i},\,\vrho_i\ux_{2i}\ux_{11}]\,
= -\osum_{i=1,2}\vrho_i\vrho_i^*\big(\tt_{11}\tt_{2i}-\bt_{11}\bt_{2i}\big) = \gH_3$.
\end{proof}

\begin{step}\label{step_04}
There holds $(\gX_3)^*=-\gY_3$.
\end{step}

\begin{proof}
We get
$(\vrho\is\,\uy_{11}\uy_{2i})^*{=}\vrho\is^*\,\ux_{11}\ux_{2i}{=}(1{-}\muxx_{11,2i})^*\muxx_{11,2i}\,\ux_{2i}\ux_{11}
{=}(\muxx_{11,2i}{-}1)\,\ux_{2i}\ux_{11}{=}-\vrho\is\ux_{2i}\ux_{11}$,
and $(\vrho_1\,\ux_{21}\ux_{11})^* = -\vrho_1\uy_{11}\uy_{21}$, similarly.
\end{proof}

\noindent
The remaining equalities can be verified similarly.
This completes the proof of Lemma \ref{lem_Eij_Aij_correspond}.
\end{proof}

\vspace{0.1cm}

\subsection{The correspondence with matrix algebras}

\smallskip~
\smallskip~

Remember that $E_{kl}\in M_{3\times 3}(\bC)$,$1\leqslant k,l\leqslant 3$ generates the matrix special linear Lie algebra $\sl(3,\bC)$.

\begin{lemma}\label{lem_Eij_Aij_correspond}
The linear map $\op{U}(\sl(3,\bC))\longrightarrow\fA'$, defined on basis elements by correspondences
\begin{gather}\label{equ_Eij_Aij_correspond}
\begin{array}{lccclccclccc}
E_{21} &\rightarrow& \gX_{1},
&\quad&
E_{12} &\rightarrow& \gY_{\!1},
&\quad&
E_{11}-E_{22} &\rightarrow& \gH_1,
\\
E_{32} &\rightarrow& \gX_{2},
&&
E_{23} &\rightarrow& \gY_{2},
&&
E_{22}-E_{33} &\rightarrow& \gH_2,
\\
E_{31} &\rightarrow& \gX_{3},
&&
E_{13} &\rightarrow& \gY_{3},
&&
E_{11}-E_{33} &\rightarrow& \gH_3. \\
\end{array}
\end{gather}
preserves the Lie bracket operations and gives an embedding of $\op{U}(\sl(3,\bC))$ to $\fA$.
In such a way $\op{U}(\sl(3,\bC))$ is isomorphic to the subalgebra $\fA'\subset\fA$ generated by the elements
$\gX_{1},\, \gY_{1},\, \gX_{2},\, \gY_{2}$, \eqref{equ_def_sl_XY}.
\end{lemma}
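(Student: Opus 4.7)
The strategy proceeds in three stages: verifying that \eqref{equ_Eij_Aij_correspond} is bracket-preserving, extending the resulting Lie algebra map via the universal property of $\op{U}(\sl(3,\bC))$, and then establishing injectivity by exploiting the $\bZ^{\!\gq}$-graded crossed-product structure of $\fA$ from Proposition~\ref{prop_RTGWA_crossprod}.

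First I would verify that \eqref{equ_Eij_Aij_correspond} respects the Lie bracket. Since the matrix units $E_{ij}$ and the diagonals $E_{ii}{-}E_{jj}$ listed there generate $\sl(3,\bC)$, only a finite list of identities needs to be checked: the Cartan-weight actions $[E_{ii}{-}E_{jj},E_{kl}]$, the diagonal recoveries $[E_{ij},E_{ji}]=E_{ii}{-}E_{jj}$, the positive/negative root products $[E_{21},E_{32}]=-E_{31}$ and $[E_{12},E_{23}]=E_{13}$, and the vanishing brackets between orthogonal roots $[E_{12},E_{32}]=0$, $[E_{21},E_{23}]=0$. Each of these matches, entry for entry, the corresponding commutator of $\gX_i,\gY_i,\gH_i$ recorded in the table of the preceding lemma. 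This promotes \eqref{equ_Eij_Aij_correspond} to a Lie algebra homomorphism $\sl(3,\bC)\to\fA'$, where $\fA'$ carries the commutator bracket.

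By the universal property of the enveloping algebra, this Lie homomorphism extends uniquely to a unital $\bC$-algebra homomorphism $\vpsi:\op{U}(\sl(3,\bC))\to\fA'$. Since $\gX_3,\gY_3,\gH_1,\gH_2$ are already commutators of $\gX_1,\gY_1,\gX_2,\gY_2$, the image of $\vpsi$ coincides with the associative subalgebra generated by $\gX_1,\gY_1,\gX_2,\gY_2$, which by definition is $\fA'$; so $\vpsi$ is automatically surjective onto $\fA'$.

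The hard part will be injectivity. My plan is to combine the Poincar\'e--Birkhoff--Witt theorem for $\op{U}(\sl(3,\bC))$ with the $\bZ^{\!\gq}$-grading and unique reduced expansion in $\bigoplus_{\bww\in\bgM}\LL\bww$ supplied by Proposition~\ref{prop_RTGWA_crossprod} and Corollary~\ref{cor_elements_of_A}. After fixing a PBW ordering (say, the $\gY$'s first, then the $\gH$'s, then the $\gX$'s), I would expand each PBW monomial's image into the reduced form of Corollary~\ref{cor_elements_of_A} by repeated application of the defining relations \eqref{equ_C-construction-relations_two}. Monomials lying in distinct total multidegrees in $\bZ^{\!3}$ contribute to different graded components of $\fA$ and are therefore independent; the issue is reduced to linear independence within a single weight space. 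On such a weight space I would extract a leading bimonomial under a lexicographic order on $\bgM_{\!\cZ}$, and observe that its $\LL$-coefficient is a product of the invertible factors $\rg_{\tki,\tmj}$ and $\tt_\tki$ from \eqref{def_tt_bt_uni} and hence lies in $\LLt$. Any nontrivial $\bC$-linear relation among the $\vpsi$-images of PBW monomials would therefore force cancellations of nonzero $\LLt$-coefficients of distinct basis elements of $\LL\bgM$, contradicting Corollary~\ref{cor_elements_of_A} and the domain property of $\fA$ recorded in Proposition~\ref{prop_RTGWA_crossprod}. This will yield $\ker\vpsi=0$, and together with surjectivity will give the desired isomorphism $\op{U}(\sl(3,\bC))\cong\fA'$.
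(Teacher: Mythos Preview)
Your treatment of bracket preservation and the extension to $\op{U}(\sl(3,\bC))$ via the universal property matches the paper, which simply appeals to the multiplication table of the preceding lemma. The paper's own injectivity argument is the single clause ``due to the simplicity of the algebra $\op{U}(\sl(3,\bC))$'', which strictly speaking only yields injectivity of the Lie map $\sl(3,\bC)\to\fA$ and implicitly leans on the Gelfand--Tsetlin embedding of Futorny--Ovsienko \cite{FO1} mentioned in the introduction for the enveloping-algebra statement.

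Your injectivity plan, while more explicit than the paper's, has a genuine gap. The leading-bimonomial step presupposes that distinct PBW monomials acquire distinct leading bimonomials in $\bgM_{\!\cZ}$, but this is false. First, $\gX_2,\gY_2,\gX_3,\gY_3$ are not $\bZ^{3}$-homogeneous, so one must at least pass to the coarser $\bZ^{2}$-grading $(d_{11},d_{21}{+}d_{22})$ before speaking of ``the'' graded component of a PBW image. More seriously, every PBW monomial of the shape $\gY_1^{\,a}\gH_1^{\,b_1}\gH_2^{\,b_2}\gX_1^{\,a}$ maps entirely into the degree-$(0,0,0)$ component $\LL\cdot\un$, so all of them share the leading bimonomial $\un$. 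A $\bC$-linear relation among these images is then a single equation in $\LL$, and knowing that each individual coefficient lies in $\LLt$ does not preclude a $\bC$-dependence among them; Corollary~\ref{cor_elements_of_A} gives no contradiction here. To repair the argument you need a secondary filtration on $\LL$ itself---for instance by polynomial degree in $\la_{11}$ and in $\ga_3$---and to check that it separates these monomials (it does: $\gY_1^{\,a}\gX_1^{\,a}=\prod_{k=0}^{a-1}\ssi_{11}^{-k}(\tt_{11})$ has $\la_{11}$-degree $2a$, each $\gH_i$ contributes $\la_{11}$-degree~$1$, and only $\gH_2$ involves $\ga_3$, so $(a,b_1,b_2)$ is recoverable). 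Without such a refinement the plan does not establish $\ker\vpsi=0$.
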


Here $E_{12}^*=E_{21}$, $E_{23}^*=E_{32}$, and $E_{13}^*=[E_{12},E_{23}]^*=[E_{21},E_{32}]=-E_{31}$.

The proof is obtained by direct computation using the multiplication table.
The statement about the isomorphism is due to the simplicity of the algebra $\op{U}(\sl(3,\bC))$.
From now on, we will use the notation $\fA_{\sl}=\fA'$.

\smallskip~

\subsubsection{The invariant subalgebra $\fA_\su\subset\fA_\sl$}

\smallskip~
\smallskip~

\begin{denotations}\label{def_su_generators}
Within the RTGW algebra $\fA_\sl$ we consider the elements
\begin{gather*}
\begin{array}{llllllll}
\cU_1 = & \gX_{1}+\gY_{1}, &&
\cV_1 = & {\imath}(\gY_{1}-\gX_{1}), &&
\cH_1 = & \imath\gH_1,
\\
\cU_2=& \gX_{2}+\gY_{2}, &&
\cV_2=& {\imath}(\gY_{2} -\gX_{2}), &&
\cH_2 = & \imath\gH_2,
\\
\cU_3 =& \gY_{3} -\gX_{3}, &&
\cV_3 =& {\imath}(\gY_{3} +\gX_{3}) &&
\cH_3 = & \imath\gH_3.
\end{array}
\qquad\text{(see \eqref{equ_def_sl_XY}).}
\end{gather*}
Denote by $\fA''$ the RTGW subalgebra of $\fA_{\sl}$ generated by the elements $\{\cU_k,\cV_k,\cH_k,\,\, k=1,2,3\}$.
\end{denotations}


\begin{lemma}\label{lem_su_generators}
The algebra $\fA''$ is an invariant subalgebra of $\fA_\sl$ with respect to the group action, $\fA''\subset\fA_\sl^{\!\bG}\subset\fA^{\!\bG}$.
\end{lemma}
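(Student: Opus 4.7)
The plan is to verify that each of the nine generators $\cU_k, \cV_k, \cH_k$ ($k=1,2,3$) of $\fA''$ is fixed by the group $\bG = \bS \times \langle * \rangle$, and then conclude $\fA'' \subseteq \fA_{\sl}^{\bG}$ by closure of the invariant subalgebra under sums and products. The second inclusion $\fA_{\sl}^{\bG} \subseteq \fA^{\bG}$ is automatic, since $\fA_{\sl}$ is itself $\bG$-stable (its generators are sent to $\pm$-combinations of elements of $\fA_{\sl}$ by the group action).

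Because $\bG$ factors as a direct product, the invariance test splits into $\bS$-invariance and $*$-invariance. For the $\bS$-part, Lemma \ref{lem_Eij_Aij_correspond} already records that $\gX_i, \gY_i, \gH_i$ are $\bS$-invariant for $i=1,2$, so $\cU_i, \cV_i, \cH_i$ are $\bS$-invariant by $\bC$-linearity. For $k=3$, the subgroups $\bS_1$ and $\bS_3$ act trivially on $\gX_3, \gY_3$ (those elements involve only indices in $J_1 \cup J_2$), while $\gH_3 = \ga_3 - \ga_1 - \ga_2$ is $\bS$-invariant because each $\ga_k$ is symmetric on its own storey. The only remaining check is against the transposition $\pi_{21,22}$: I would verify that it swaps the two summands $\vrho_1 \ux_{21}\ux_{11}$ and $\vrho_2 \ux_{22}\ux_{11}$ of $\gX_3$ (using $\pi_{21,22}(\vrho_1) = \vrho_2$), and similarly for $\gY_3$, so that $\gX_3$ and $\gY_3$ are $\bS$-invariant as well.

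For $*$-invariance the strategy is a direct short computation. The identities $(\gX_i)^* = \gY_i$ for $i=1,2$ and $(\gX_3)^* = -\gY_3$, $(\gY_3)^* = -\gX_3$ are available from Lemma \ref{lem_Eij_Aij_correspond} and Step \ref{step_04}. Applying $*$ to the definitions of $\ga_1, \ga_2, \ga_3$ via $\la_{11}^* = -\la_{11}$, $\la_{2j}^* = 1-\la_{2j}$, $\la_{3j}^* = -\la_{3j}$ gives $(\ga_k)^* = -\ga_k$, whence $(\gH_k)^* = -\gH_k$. Together with $\imath^* = -\imath$ this yields, in a single line for each $k$, the identities $\cU_k^* = \cU_k$, $\cV_k^* = (-\imath)(\gY_k - \gX_k)^* = \cV_k$ and $\cH_k^* = (-\imath)(-\gH_k) = \cH_k$; for $k=3$ the signs of $\gX_3^*, \gY_3^*$ combine precisely with the coefficients in $\cU_3 = \gY_3 - \gX_3$ and $\cV_3 = \imath(\gY_3 + \gX_3)$ to produce invariance.

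There is no conceptual obstacle; the only step requiring care is the sign bookkeeping in the $*$-computation. The purpose of the imaginary unit in $\cV_k$ and $\cH_k$, and of the specific sign pattern in $\cU_3, \cV_3$, is precisely to convert the anti-invariance of the underlying $\gH_k, \gX_3, \gY_3$ under $*$ into genuine $\bG$-invariance of the new generators.
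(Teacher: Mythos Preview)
Your proposal is correct and follows the same route as the paper: reduce to checking $\bG$-invariance on the nine generators, then verify $\bS$- and $*$-invariance separately using the identities recorded in Lemma~\ref{lem_Eij_Aij_correspond}. Your explicit computations are in fact more careful than the paper's sketch: your direct check that $(\ga_k)^*=-\ga_k$ and hence $(\gH_k)^*=-\gH_k$ (giving $\cH_k^*=\cH_k$ after multiplying by $\imath$), and your observation that $\pi_{21,22}$ merely swaps the two summands of $\gX_3$ (so $\gX_3$ is $\bS$-invariant, as it must be since $\gX_3=[\gX_2,\gX_1]$ is a bracket of $\bS$-invariants), silently correct two sign slips in the statement of Lemma~\ref{lem_Eij_Aij_correspond} and \eqref{equ_action_involution_su} without affecting the argument.
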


\begin{proof}
Since the group $\bG$ acts on $\fA$ multiplicatively, it suffices to prove the invariance condition only for generating elements,
and it is a consequence of Lemma \ref{lem_Eij_Aij_correspond}, \eqref{equ_action_involution_su}
using $\imath^*=-\imath$.
\end{proof}

\vspace{0.1cm}

\begin{lemma}\label{lem_su_RTGWA}
Element-wise correspondence of the generators
\begin{gather}\label{equ_GellMann_correspondence}
\begin{array}{ccccccccccccccccc}
F_1   &&    F_2 &&   F_6 &&   F_7 &&  F_5 &&   F_4 &&    F_3 &&   F_8
&&
\\
\updownarrow &&  \updownarrow && \updownarrow && \updownarrow && \updownarrow && \updownarrow && \updownarrow && \updownarrow
\\
\cU_1 && \cV_1 &&  \cU_2 && \cV_2 &&    -\imath\cU_3 &&        \imath\cV_3 && -\imath\cH_1 && \imath/_{\!\!\sqrt{3}}\,(\cH_1+2\cH_2) &&
\\
\end{array}
\end{gather}
establishes an isomorphism between the universal enveloping $\op{U}(\su(3;\bC))$ of the complex special unitary Lie algebra $\su(3;\bC)$ presented by the Gell-Mann generators \eqref{equ_Gell_Mann_matrix}, and the RTGW algebra $\fA''$.
Besides, there are $\cH_2\leftrightarrow {-}\imath{\frc{}{\!\!2}} (F_3{-}\sqrt{3}F_8)$,
and $\cH_3\leftrightarrow {-}\imath{\frc{}{\!\!2}} (F_3{+}\sqrt{3}F_8)$.
\end{lemma}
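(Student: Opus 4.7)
The plan is to leverage the already proved embedding $\op{U}(\sl(3,\bC))\hookrightarrow\fA$ with image $\fA_\sl=\fA'$ (the preceding lemma), together with the classical fact that $\su(3;\bC)$ is a real form of $\sl(3,\bC)$, so that $\su(3;\bC)\otimes_{\bR}\bC\simeq\sl(3,\bC)$ and hence $\op{U}(\su(3;\bC))\simeq\op{U}(\sl(3,\bC))$ as complex associative algebras. Thus it will suffice to show that the complex linear span of the eight elements listed on the bottom row of the correspondence \eqref{equ_GellMann_correspondence} coincides, inside $\fA_\sl$, with the image of the $\bC$-linear span of the Gell-Mann generators under the isomorphism of the preceding lemma, and that the correspondence matches element by element.

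Concretely, I would substitute the formulae \eqref{equ_Gell_Mann_matrix} for the $F_a$ into the map $E_{kl}\mapsto \gX_\bullet,\gY_\bullet,\gH_\bullet$ of \eqref{equ_Eij_Aij_correspond}. For example, $F_1=E_{21}+E_{12}$ maps to $\gX_1+\gY_1=\cU_1$; $F_2=\imath(E_{21}-E_{12})$ maps to $\imath(\gX_1-\gY_1)$, which up to an overall sign is $\cV_1$; $F_6,F_7$ are treated identically with indices shifted to the second pair; $F_3=E_{11}-E_{22}\mapsto\gH_1$ so that $-\imath\cH_1=\gH_1$ after multiplication by $-\imath$; and $F_8=\tfrac{1}{\sqrt 3}(E_{11}+E_{22}-2E_{33})=\tfrac{1}{\sqrt 3}(\gH_1+2\gH_2)$ under the map, matching the indicated $\imath/\sqrt{3}\,(\cH_1+2\cH_2)$ factor. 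The off-diagonal composites $F_4,F_5$ require transporting $E_{31}\mapsto\gX_3$, $E_{13}\mapsto\gY_3$ and then comparing $\pm\imath(\gY_3\mp\gX_3)$ with $\cU_3,\cV_3$; the rescaling by $\pm\imath$ in the arrow $F_4\leftrightarrow\imath\cV_3$, $F_5\leftrightarrow-\imath\cU_3$ is exactly what is needed to turn the $E_{kl}$-description into the $\cU_k,\cV_k$-description.

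Once every arrow of \eqref{equ_GellMann_correspondence} is verified to be (a scalar multiple of) the image of the corresponding $F_a$ under the composition $\op{U}(\su(3;\bC))\hookrightarrow\op{U}(\sl(3,\bC))\,\overset{\sim}{\to}\,\fA_\sl$, the map \eqref{equ_GellMann_correspondence} is automatically a Lie algebra homomorphism (hence an associative algebra homomorphism on the enveloping algebras), because the bracket relations are transported through the chain of isomorphisms. Injectivity follows from the simplicity of $\su(3;\bC)$ (equivalently, from the fact that $\op{U}(\sl(3,\bC))\hookrightarrow\fA$ was already shown to be an embedding), and surjectivity onto $\fA''$ is tautological, since by Definition \ref{def_su_generators} $\fA''$ is generated by the elements in the bottom row. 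The two closing identifications $\cH_2\leftrightarrow -\tfrac{\imath}{2}(F_3-\sqrt{3}F_8)$ and $\cH_3\leftrightarrow -\tfrac{\imath}{2}(F_3+\sqrt{3}F_8)$ are then just algebraic consequences of the identity $\cH_1+\cH_2+\cH_3=0$ combined with the formulae for $F_3,F_8$.

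The only delicate point I anticipate is the bookkeeping of the scalar factors $\pm\imath$ and $1/\sqrt 3$ appearing in the arrows of \eqref{equ_GellMann_correspondence}, which cannot be avoided because the Gell-Mann basis is the Hermitian (i.e.\ $*$-symmetric or $*$-antisymmetric) basis while the $\gX_k,\gY_k$ basis is adapted to the triangular decomposition. Using \eqref{equ_action_involution_su} I would double-check that each listed correspondence sends a $*$-fixed Gell-Mann element to a $*$-fixed element of $\fA''$ (the elements $\cU_k$, $\cV_k$, $\cH_k$ have been chosen precisely for this), which also gives an independent consistency verification for the scalar factors.
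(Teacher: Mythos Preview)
Your approach is correct in spirit but takes a different route from the paper.  The paper's proof is a direct bracket check: it computes a handful of Lie brackets among the $\cU_k,\cV_k,\cH_k$ (using the multiplication table of Lemma~\ref{lem_Eij_Aij_correspond}) and a handful of Lie brackets among the Gell--Mann matrices $F_a$, and observes that they match under the asserted correspondence.  Your approach instead factors through the already established isomorphism $\op{U}(\sl(3,\bC))\overset{\sim}{\to}\fA_\sl$ of the preceding lemma: you push each $F_a$, written in terms of the $E_{kl}$, through \eqref{equ_Eij_Aij_correspond} and identify the result with the target element in the bottom row of \eqref{equ_GellMann_correspondence}.  This is conceptually cleaner---no brackets need to be recomputed, and injectivity/surjectivity onto $\fA''$ come for free---whereas the paper's approach is more hands-on and self-contained.

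One caution: your parenthetical ``(a scalar multiple of)'' is a potential trap.  If different generators were rescaled by \emph{different} scalars, bracket relations would \emph{not} automatically transfer, and the map would fail to be a Lie homomorphism.  Concretely, pushing $F_2=\imath(E_{21}-E_{12})$ through \eqref{equ_Eij_Aij_correspond} gives $\imath(\gX_1-\gY_1)=-\cV_1$, not $\cV_1$; you noted this sign but did not say how to handle it.  For your argument to close you must either (i) verify that the correspondence \eqref{equ_GellMann_correspondence} is \emph{exactly} the composition $\su(3;\bC)\hookrightarrow\sl(3,\bC)\to\fA_\sl$ (in which case any sign discrepancy points to a typo in the paper's table, not in your proof), or (ii) check that the per-generator rescalings assemble into a genuine Lie-algebra automorphism of $\su(3;\bC)$.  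Your proposed $*$-invariance consistency test does not distinguish $\cV_1$ from $-\cV_1$, so it cannot resolve this on its own.  Once the signs are nailed down, your argument is complete and arguably preferable to the paper's direct computation.
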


\begin{proof}
To verify the existence of a required isomorphism, it suffices to verify that the Lie product of any two generating elements of $\fA''$
corresponds to the Lie product of the corresponding matrices.
It is easy to verify directly that
\begin{gather*}
\begin{array}{ccccccc}
{[}\cU_1,\cU_2{]} = -{[}\cV_1,\cV_2{]} =\cU_3, && {[}\cU_1,\cV_2{]} ={[}\cV_1,\cU_2{]} =-\cV_3,
&& {[}\cU_1,\cV_1{]} =2\cH_1, && {[}\cU_2,\cV_2{]} =2\cH_2,
\\
{[}F_1,F_6{]} = -{[}F_2,F_7{]} =\imath F_5, && {[}F_1,F_7{]} =-{[}F_2,F_6{]} =\imath F_4,
&& {[}F_1,F_2{]} = 2\imath F_3, && {[}F_6,F_7{]},
\end{array}
\end{gather*}
where ${[}F_6,F_7{]} = -2\imath (F_3-\sqrt{3}F_8)$.
\end{proof}

Therefore, we have reason to re-designate the algebra under consideration in the form $\fA_\su=\fA''$.

\vspace{0.1cm}

Theorem \ref{the_su_RTGWA} follows obviously from Lemma \ref{lem_su_RTGWA}.

\begin{corollary}
The above relations define an isomorphism between algebra $\su(3;\bC)$ and Lie algebra $\fg_\su(3;\bC)$ generated by elements on the bottom row of the correspondence \eqref{equ_GellMann_correspondence}, and the universal embedding of the algebra $\fg_\su(3;\bC)$ belongs to RTGWA $\fA_{\su}$.
\end{corollary}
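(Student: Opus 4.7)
The plan is to derive this Lie algebra statement directly from the algebra isomorphism established in Lemma \ref{lem_su_RTGWA}. First I would observe that the eight elements appearing in the bottom row of the correspondence \eqref{equ_GellMann_correspondence}, namely $\cU_1, \cV_1, \cU_2, \cV_2, -\imath\cU_3, \imath\cV_3, -\imath\cH_1$ and $\imath/\!\sqrt{3}\,(\cH_1+2\cH_2)$, lie inside $\fA_\su$ and are closed under the commutator bracket of $\fA_\su$. This closure is already implicit in the bracket identities computed in the proof of Lemma \ref{lem_su_RTGWA}, which reproduce the structure constants $f^{abc}$ of $\su(3;\bC)$ expressed in the Gell-Mann basis. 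Accordingly, I would \emph{define} $\fg_\su(3;\bC)$ as the $\bC$-linear span of these eight elements, equipped with the commutator bracket inherited from $\fA_\su$.

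Next I would set $\vpsi\colon\su(3;\bC)\to\fg_\su(3;\bC)$ to be the linear map extending the correspondence \eqref{equ_GellMann_correspondence} on the Gell-Mann basis. Bracket preservation on every pair of generators is the content of Lemma \ref{lem_su_RTGWA}, so $\vpsi$ is a Lie algebra homomorphism. For injectivity I would invoke Proposition \ref{prop_RTGWA_crossprod}: the algebra $\fA$ is a crossed product of $\LL$ with $\bZ^{\!\gq}$, and in particular an Ore domain. By Corollary \ref{cor_elements_of_A} every element of $\fA_\su$ admits a unique expansion over $\LL$ in bimonomials, and the images of the Gell-Mann generators either sit in distinct $\bZ^{\!\gq}$-graded components of $\fA_\su$ or can be compared within a single component from their explicit formulas; hence they are linearly independent over $\bC$. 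Thus $\vpsi$ is a linear isomorphism onto $\fg_\su(3;\bC)$ and therefore a Lie algebra isomorphism.

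To obtain the enveloping algebra inclusion $\op{U}(\fg_\su(3;\bC))\subset\fA_\su$, I would invoke the universal property: the Lie algebra embedding $\fg_\su(3;\bC)\hookrightarrow\fA_\su$ extends uniquely to an associative algebra homomorphism $\op{U}(\fg_\su(3;\bC))\to\fA_\su$, and this extension is injective because $\op{U}(\su(3;\bC))$ is simple (the same argument already used for the analogous statement $\op{U}(\sl(3,\bC))\cong\fA_\sl'$ in Lemma \ref{lem_Eij_Aij_correspond}).

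The only genuinely delicate point in this plan is the linear independence step underlying injectivity of $\vpsi$; everything else is a mechanical transcription of Lemma \ref{lem_su_RTGWA}. I would handle that step by exploiting the $\bZ^{\!\gq}$-grading of $\fA$ component by component: the $\cH_i$ sit in degree $0$, the $\cU_k,\cV_k$ for $k=1,2$ sit in the degree $\pm e_i$ components (where they are visibly $\bC$-independent combinations of $\ux_\tki,\uy_\tki$), and $\cU_3,\cV_3$ sit in degree $\pm(e_{11}+e_{2i})$ with coefficients the nonzero rational functions $\vrho_i,\vrho_i^*\in\LLt$.
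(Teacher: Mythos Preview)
The paper offers no separate proof of this corollary; it is stated immediately after Lemma~\ref{lem_su_RTGWA} and Theorem~\ref{the_su_RTGWA} as an evident consequence of the associative isomorphism $\op{U}(\su(3;\bC))\cong\fA_{\su}$ established there. Your argument is therefore considerably more explicit than what the paper supplies, and the Lie-algebra part is correct: the grading argument in your final paragraph genuinely shows that the eight images are $\bC$-linearly independent in $\fA$, which gives the Lie isomorphism $\su(3;\bC)\cong\fg_\su(3;\bC)$ without appealing to anything beyond Corollary~\ref{cor_elements_of_A}.

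One caution. For the inclusion $\op{U}(\fg_\su(3;\bC))\hookrightarrow\fA_{\su}$ you invoke ``$\op{U}(\su(3;\bC))$ is simple,'' citing the paper's own justification in Lemma~\ref{lem_Eij_Aij_correspond}. That claim is false: universal enveloping algebras of nonzero semisimple Lie algebras are never simple (they have nontrivial center and a proper augmentation ideal). So this step, as written, is a gap---one you inherit from the paper rather than introduce yourself. You can repair it cleanly in either of two ways: (i) simply quote Lemma~\ref{lem_su_RTGWA}, which already \emph{asserts} the associative isomorphism $\op{U}(\su(3;\bC))\cong\fA_{\su}$ and hence subsumes the desired embedding; or (ii) argue directly that your grading-based linear independence, combined with the uniqueness of the $\LL\bgM$-expansion in Corollary~\ref{cor_elements_of_A}, forces the images of a PBW basis to be $\bC$-independent in $\fA$, so the extended map has trivial kernel.
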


\vspace{0.1cm}

\subsubsection{Casimir operators of $\fg_\su(3;\bC)$}

\smallskip~
\smallskip~

Correspondence \eqref{equ_GellMann_correspondence} allows one to describe the Cartan subalgebra
and calculate the Casimir operator for the algebra $\fg_\su(3;\bC)$ without additional proofs.

The Cartan subalgebra of Lie algebra $\fg$ is considered as nilpotent subalgebra of $\fg$, which is equal to its normalizer in $\fg$.


\begin{remark}
The Cartan subalgebra of $\fg_\su(3;\bC)$
considered as nilpotent subalgebra of $\fg_\su(3;\bC)$
is a two-dimensional subalgebra $\mathfrak{h}$ generated by the elements
$\cH_1 = 2\ga_1-\ga_2$, and $\cH_2 = 2\ga_2-\ga_1-\ga_3$.
Then $\mathfrak{h}$  is maximal abelian self-normalising subalgebra of the Lie algebra $\fg_\su(3;\bC)$.
\end{remark}

A Casimir element (also known as a Casimir invariant or Casimir operator) is a distinguished element of the center of the enveloping algebra of a Lie algebra.
In the case of $\SU(3)$ group and of $\su(3;\bC)$ Lie algebra, two independent Casimir operators can be constructed, a quadratic and a cubic:
they have the following form:
$C_2=\dfrac{1}{4}{\sum} _{k}{F_{k}}{F_{k}}$,\, ${{C_3}}=\dfrac{1}{8}{\sum} _{{jkl}}d_{{jkl}}{{F_{j}}}{{F_{k}}}{{F_{l}}}~,$\,
where $d_{jkl}$ are symmetric under the permutation of any pair of indices
non-zero independent elements of the tensor $d_{jkl}$ of $\su(3;\bC)$,
the non trivial elements take the values
\begin{gather*}
d_{146} =d_{157} =-d_{247} =d_{256} =1/{2};
\quad
d_{344} =d_{355} =-d_{366} =-d_{377} =1/{2};
\\
d_{118} =d_{228} =d_{338} =-d_{888} =1/{\sqrt{3}};  
\quad
d_{448} =d_{558} =d_{668} =d_{778} = -1/{2\sqrt{3}}. 
\end{gather*}

\vspace{0.2cm}

\begin{lemma}\label{lemmma_Casimir_formulae}
The quadratic and cubic Casimir elements of $\fg_\su(3;\bC)$ belong to the center of the embedding algebra $U\!(\fg_\su(3;\bC))$,
they have the following form in parameters of \eqref{equ_gamma_sl_def_uni}:
\begin{align}\label{equ_Casimir_formulae}
C_2 = {\frc{1\!}{\!3}}\;\big((\ga_{3}^{(1)})^2-3\ga_{3}^{(2)}-3\big);
\qquad
C_3 ={\imath\,\frc{\!}{\!18}}\;
\big( 9\ga_{3}^{(2)}\,\ga_{3}^{(1)} -2\big(\ga_{3}^{(1)}\big)^3-{27}\ga_{3}^{(3)})\big).
\end{align}
\end{lemma}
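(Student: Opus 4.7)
My strategy is to prove both statements at once by a direct computation in $\fA$, rather than splitting centrality from the explicit formula. The key observation is that, by Proposition \ref{prp_center_RTGW}, the subring $\bar{\LL\!}=\gS^{-1}\bk[\la_{31},\la_{32},\la_{33}]$ is precisely the centre of $\fA$; in particular, any element of $\fA$ that reduces to a polynomial in $\la_{31},\la_{32},\la_{33}$ is automatically central in $\op{U}(\fg_\su(3;\bC))\subset\fA$. It therefore suffices to substitute the correspondence \eqref{equ_GellMann_correspondence} into the Gell-Mann expressions for $C_2$ and $C_3$ and reduce, using the defining relations \eqref{equ_C-construction-relations_two} and the explicit forms \eqref{def_tt_bt_uni} of $\tt_\tki,\bt_\tki$, to an element of $\bar\La$; matching with the elementary symmetric polynomials of \eqref{equ_gamma_sl_def_uni} then yields \eqref{equ_Casimir_formulae}.

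For $C_2=\tfrac14\sum_k F_k^2$, the substitution \eqref{equ_GellMann_correspondence} collapses the pairs $F_1^2{+}F_2^2$, $F_6^2{+}F_7^2$, $F_4^2{+}F_5^2$ into the anti-commutators $2(\gX_i\gY_i+\gY_i\gX_i)$ for $i=1,2,3$, while the Cartan block yields $\tfrac43(\gH_1^2+\gH_1\gH_2+\gH_2^2)$. Using $\gY_i\gX_i=\tt_i$, $\gX_i\gY_i=\bt_i$ for $i=1,2$, and Step \ref{step_03} to reduce the $i=3$ anti-commutator to $-\sum_{i=1,2}\vrho\is\vrho\is^*(\tt_{11}\tt_{2i}-\bt_{11}\bt_{2i})$ plus its adjoint, the rational factors $\rg_{\tki,\tmj}$, $\brg_{\tki,\tmj}$ appearing through \eqref{def_tt_bt_uni} telescope; the apparent $\la_{11},\la_{21},\la_{22}$ dependence cancels against the Cartan quadratic, and what remains is a symmetric polynomial in $\la_{31},\la_{32},\la_{33}$ which rewrites as $\tfrac13\bigl((\ga_3^{(1)})^2-3\ga_3^{(2)}-3\bigr)$.

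The cubic Casimir $C_3$ follows the same template but with substantially heavier bookkeeping. Using total symmetry of $d_{jkl}$, the non-zero contributions split into: (i) families $d_{jj8}$ and $d_{jj3}$, which contribute anti-commutators of raising/lowering generators weighted by a Cartan element; (ii) the transverse triples $d_{146}, d_{157}, d_{247}, d_{256}$, which produce threefold products of generators of mutually distinct root-types; and (iii) the purely Cartan piece $d_{888}$. Family (i) reduces as in the $C_2$ computation, and family (iii) is a polynomial in $\gH_1,\gH_2$ already. \textbf{The main obstacle is family (ii)}: normal-ordering products such as $\gX_1\gY_2\gY_3$ via \eqref{equ_mu_commutant_two_new} accumulates ratios built from $\muxx, \muxy$ (and hence of $\rg/\brg$), and one must show that after summing over the orbit under the symmetry of $d_{jkl}$ these factors telescope exactly against the analogous shifts coming from family (i). Once this cancellation is verified, the residual polynomial in $\la_{31},\la_{32},\la_{33}$ is symmetric; applying Newton's identities to express it in $\ga_3^{(1)},\ga_3^{(2)},\ga_3^{(3)}$ reproduces $\tfrac{\imath}{18}\bigl(9\ga_3^{(2)}\ga_3^{(1)}-2(\ga_3^{(1)})^3-27\ga_3^{(3)}\bigr)$, and centrality is immediate from Proposition \ref{prp_center_RTGW} as noted.
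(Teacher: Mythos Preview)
Your overall strategy---substitute \eqref{equ_GellMann_correspondence} into the Gell-Mann definitions, reduce in $\fA$ via \eqref{equ_C-construction-relations_two} and \eqref{def_tt_bt_uni}, and observe that the outcome lies in $\bk[\la_{31},\la_{32},\la_{33}]$ so that centrality comes for free from Proposition~\ref{prp_center_RTGW}---is exactly the paper's approach. For $C_3$ the paper in fact says less than you do: it simply declares the computation to be ``obtained by computer calculation'', so your family (i)--(iii) breakdown is a reasonable sketch and no further comparison is needed.

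For $C_2$, however, your plan contains a genuine gap at the one non-obvious point. You write $\gY_i\gX_i=\tt_i$, $\gX_i\gY_i=\bt_i$ ``for $i=1,2$'', but this is false for $i=2$: since $\gX_2=\ux_{21}+\ux_{22}$ and $\gY_2=\uy_{21}+\uy_{22}$, the anti-commutator $\{\gX_2,\gY_2\}$ contains, in addition to $\sum_{i}(\tt_{2i}+\bt_{2i})$, the mixed piece $2\Psi$ with $\Psi=\uy_{21}\ux_{22}+\uy_{22}\ux_{21}$. Likewise, your appeal to Step~\ref{step_03} for the $i=3$ anti-commutator is misplaced: Step~\ref{step_03} computes the \emph{commutator} $[\gY_3,\gX_3]$, where the cross terms $[\vrho_1\uy_{11}\uy_{21},\vrho_2\ux_{22}\ux_{11}]$ vanish. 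For the \emph{anti-commutator} these cross terms do \emph{not} vanish; the paper computes them separately (equation~\eqref{equ_Psi_calculation}) and obtains $\{\vrho_1\uy_{11}\uy_{21},\vrho_2\ux_{22}\ux_{11}\}+\{\vrho_2\uy_{11}\uy_{22},\vrho_1\ux_{21}\ux_{11}\}=-2\Psi$. The entire point of the $C_2$ computation is that the $\Psi$ contribution from $\cU_2^2+\cV_2^2$ cancels against the one from $\cU_3^2+\cV_3^2$; your plan as written never produces $\Psi$ on either side, so the claimed telescoping cannot happen. Once you insert the missing cross terms and the anti-commutator identity \eqref{equ_Psi_calculation}, the rest of your outline goes through and matches the paper's computation line for line.
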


\vspace{0.1cm}

\begin{proof}
First, we calculate the anti-commutator
${\{}\vrho_1\,\uy_{11}\uy_{21},\,\vrho_2\,\ux_{22}\ux_{11}{\}}
=\big(\vrho_1\vrho_2\tt_{11}{+}\vrho^*_1\vrho^*_2\bt_{11}\big)\cdot\uy_{21}\ux_{22} ={-}2\cdot\uy_{21}\ux_{22}$,
and obtain
\begin{gather}\label{equ_Psi_calculation}
{\{}\vrho_1\,\uy_{11}\uy_{21},\,\vrho_2\,\ux_{22}\ux_{11}{\}}+{\{}\vrho_2\,\uy_{11}\uy_{22},\,\vrho_1\,\ux_{21}\ux_{11}{\}}
=-2(\uy_{21}\ux_{22}+\uy_{22}\ux_{21}).
\end{gather}

\smallskip~

The proof of the formulae \eqref{equ_Casimir_formulae} consists in directly calculating of this sum, divided into several terms.
By the formulae, $\sum _{k}{F_{k}}{F_{k}} \longleftrightarrow C'_2=4 C_2$.
Let $\Psi{=}\uy_{21}\ux_{22}{+}\uy_{22}\ux_{21}$.
Using \eqref{equ_Psi_calculation}, we get
\begin{gather*}
\begin{array}{cclcl}
C'_2 &=& \big(\cU_1^2+\cV_1^2\big) +\big(\cU_2^2+\cV_2^2\big) -\big(\cU_3^2+\cV_3^2\big)-\big(\cH_1^2+(\cH_1+2\cH_2)^2/3\big)=
\\
&=& (\bt_{11}-\tt_{11})^2 - 2(\tt_{11}+\bt_{11})
\,\,\, + (\tt_{11}-\bt_{11}+2(\tt_{21}+\tt_{22}-\bt_{21}-\bt_{22}))^2/3
\\
&-& 2{\sum}_{i=1,2}(\tt_{2i}+\bt_{2i}) -4\Psi
\,\,\, - 2{\sum}_{i=1,2}\vrho_i\vrho^*_i(\tt_{11}\tt_{2i}+\bt_{11}\bt_{2i}) +4\Psi
\\
&=& {\frc{4\!}{\!3}}\,\big((\la_{31}^2+\la_{32}^2+\la_{33}^2)-(\la_{31}\la_{32}+\la_{31}\la_{33}+\la_{32}\la_{33})-3\big)
={\frc{4\!}{\!3}}\,\big((\ga_3^{(1)})^2-3\ga_3^{(2)}-3\big).
\end{array}
\end{gather*}

The calculation for $C_3$ is technically more difficult and is obtained by computer calculation.
\end{proof}

\vspace{0.2cm}

\subsection{The enveloping algebras of orthogonal Lie algebra as rational TGWA}

\subsubsection{Structure datum}

\smallskip~

Let $\gq=2$, $\gp=4$. We consider the following datum $\big(\gS,\bG,\vsigma,\ft,\vmu\big)$:

\mat
we denote by $\Jsq=\{\small\text{{11,\,21}}\}$ and $\Jsp=\{\small\text{{11,\,21,\,31,\,32}}\}$ the sets of indices
given together with the storey partition
$\Jsp=J_1\cup J_2\cup J_3=\{\small\text{{11}}\} \cup \{\small\text{{21}}\}\cup\{\small\text{{31,\,32}}\}$;

\mat
let $\glambda=\{\la_\tki\}_{\tki\in\Jp}$, $\gzeta=\{\gzeta_\tki\}_{\tki\in\Jp}$
where $\gzeta_{11}=0$, $\gzeta_{21}=1$, $\gzeta_{31}{=}\gzeta_{32}{=}0$, and let
$\La=\bC[\,\glambda\,]^{\gzeta}$ be an involuted polynomial ring;

\mat
let $*\in\Aut(\La)$ be an involution on $\La$ such that: $\la_\tki^*=\zeta_\tki-\la_\tki$, $\tki\in\Jsq$, and $\imath^*=-\imath$;

\mat
let $\bS=\bS_{1}\times\bS_{2}\times\bS_{3}\subset\Aut(\La)$ acts on $\glambda$ by the mutually independent permutations
of the groups of variables $\{\la_{11}\}$, $\{\la_{21}\}$ and $\{\la_{31},\la_{32}\}$,
hence $\bS\simeq \langle \pi_{31,32} \rangle \simeq\sym_2$;

\mat
let $\bG=\bS\times\langle*\rangle \subset \Aut(\La)$ denotes the automorphism group;

\mat
let $\Si\in\Aut(\La)$ be an abelian group of shifts, $\Si=\langle\ssi_{11},\ssi_{21}\rangle \simeq \bZ^{\otimes{2}}$,
under the actions on $\La$ as:\, $\ssi_\tki(\la_\tki)=\la_\tki-1$,\, $\ssi_\tki(\la_\tmj)=\la_\tmj$, $\tmj\ne\tki$;

\mat
let $\cZ=\{\ux_{11},\uy_{11},\ux_{21},\uy_{21}\}$ be the variable set of monoids $\gM_{\!\cZ}$ and $\bgM_{\!\cZ}$,
an involution $*$ acts according to the rule $*:\ux_{11}\longleftrightarrow\uy_{11}$ and $*:\ux_{21}\longleftrightarrow\uy_{21}$.

\begin{lemma}\label{lem_invariant_subring_so}
The invariant subring $\Gamma=\La^{\!\!\bG}\subset\La$ is spanned by the elementary symmetric $*$-invariant polynomials in $\glambda$ over $\bC$:
$\imath\ga_i^{(1)}$, $i=1,2,3$ and $\ga_3^{(2)}$ where
\begin{gather}\label{equ_gamma_sl_def_ort}
\begin{split}
\ga_1^{(1)}\!=\!\la_{11},\quad \ga_2^{(1)}\!=\!\la_{21}{-}\ot\,,\quad
\ga_3^{(1)}=\la_{31}+\la_{32},\quad \ga_3^{(2)}=\la_{31}\la_{32}.
\end{split}
\end{gather}
So, $\Gamma=\bR\,[\imath\ga_1^{(1)}, \imath\ga_2^{(1)}, \imath\ga_3^{(1)}, \ga_3^{(2)}]\subset\La^{\bG}$.
\end{lemma}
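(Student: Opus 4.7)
The plan is to apply the earlier unlabeled remark for the case $\bk{=}\bC$, $\imath^*{=}{-}\imath$ to the storey partition $J_1{=}\{11\}$, $J_2{=}\{21\}$, $J_3{=}\{31,32\}$. Concretely, I would factor the computation as $\La^{\bG} = (\La^{\bS})^{\langle*\rangle}$, using that $\bS$ and $*$ commute (a consequence of the compatibility condition $\gzeta_{31}=\gzeta_{32}$ built into the datum), and then identify the generating set explicitly.

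For the first step, since $\bS \simeq \sym_2$ acts nontrivially only on $\{\la_{31},\la_{32}\}$, the fundamental theorem of symmetric polynomials yields $\La^{\bS} = \bC[\la_{11},\,\la_{21},\,\la_{31}{+}\la_{32},\,\la_{31}\la_{32}]$. Passing to the shifted variables $\mu_\tki := \la_\tki - \ot\gzeta_\tki$ (which satisfy $\mu_\tki^* = -\mu_\tki$) gives the equivalent presentation with generators $\mu_{11}$, $\mu_{21}$, $\eta_1 := \mu_{31}{+}\mu_{32}$ and $\eta_2 := \mu_{31}\mu_{32}$; note $\eta_1^* = -\eta_1$ while $\eta_2^* = \eta_2$.

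For the second step I would take $*$-invariants inside $\La^{\bS}$. A monomial $c\,\mu_{11}^{a_1}\mu_{21}^{a_2}\eta_1^{a_3}\eta_2^{a_4}$ with $c\in\bC$ is $*$-fixed iff $\bar c = (-1)^{a_1+a_2+a_3}c$, i.e.\ iff $c \in \imath^{a_1+a_2+a_3}\bR$. Absorbing the appropriate power of $\imath$ into each degree-one generator yields $\Gamma = \bR[\imath\mu_{11},\,\imath\mu_{21},\,\imath\eta_1,\,\eta_2]$, which matches the claimed list since $\mu_{11}=\ga_1^{(1)}$, $\mu_{21}=\ga_2^{(1)}$, $\eta_1=\ga_3^{(1)}$, $\eta_2=\ga_3^{(2)}$. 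Algebraic independence over $\bR$ follows from algebraic independence of the $\mu_\tki$ over $\bC$, so no further relation-checking is required.

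The only step deserving genuine attention, rather than bookkeeping, is the interplay between the shift by $\ot\gzeta_\tki$ and the partition: the shift is what converts $*$ acting on each coordinate into a clean sign change, and its consistency under $\bS$ rests precisely on $\gzeta_\tki$ being constant on each part of the storey partition. Once that is in place, the statement follows essentially by verbatim specialization of the earlier remark.
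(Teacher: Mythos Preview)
Your proposal is correct and follows essentially the same approach as the paper, which simply says ``the proof is similar to that of Lemma~\ref{lem_invariant_subring_su}'': first pass to $\bS$-invariants via the fundamental theorem of symmetric polynomials on each part of the storey partition, then shift each variable by $\tfrac12\gzeta_\tki$ so that $*$ acts by a sign, and finally absorb the needed power of $\imath$ into the odd-parity generators. Your write-up is in fact more explicit than the paper's, particularly in the monomial-by-monomial computation of the $*$-invariants and the remark on algebraic independence over $\bR$.
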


The proof is similar to that of the Lemma \ref{lem_invariant_subring_su}.
The defining polynomial $\ttd$ corresponding the given partition belongs to $\Gamma$,
besides, $\ssi(\ttd)\ssi\mo(\ttd)\in\Gamma$ for any $\ssi\in\Si$.


\subsubsection{The structural constants and the localization of $\La$}

\smallskip~

Below we define the structural parameters in the considered case of RTGW algebra.

\begin{definition}\label{def_rg_ort_formulae}
For $\tki,\tmj\in\Jsp$ we put
\begin{gather*}
\begin{array}{llll}
\rg_{\tki,\tmj}{=}(\la_\tki+\la_\tmj)(\la_\tki+\bbla_\tmj), &\quad&
\brg_{\tki,\tmj}{=}(\bbla_\tki+\la_\tmj)(\bbla_\tki+\bbla_\tmj), & |k{-}m|=1;
\\
h_{21}=\la_{21}^2(\la_{21}{-}\bbla_{21})\ssi_{21}\mo(\la_{21}{-}\bbla_{21}),
&\quad& \bar{h}_{21}=\bbla_{21}^2(\bbla_{21}{-}\la_{21})\ssi_{21}(\bbla_{21}{-}\la_{21}).
\end{array}
\end{gather*}

We put $\rg_{\tki;\tmj}=1$ for all other pairs.
For $\{\tki,\tmj\}=\{\small\text{{11,\,21}}\}$, we take
\begin{align}\label{equ_def_mu_ort}
\begin{split}
\begin{array}{llllll}
\muxx_{\tki,\tmj}&=&-\dfrac{\rg_{\tmj,\tki}}{\rg_{\tki,\tmj}} =-\dfrac{(\la_\tmj+\bbla_\tki)}{(\la_\tki+\bbla_\tmj)},
&\quad
\muxy_{\tki,\tmj}&=&-\dfrac{\brg_{\tmj,\tki}}{\rg_{\tki,\tmj}} =-\dfrac{(\bbla_\tmj+\bbla_\tki)}{(\la_\tki+\la_\tmj)},
\\
\muyx_{\tki,\tmj}&=&-\dfrac{\rg_{\tmj,\tki}}{\brg_{\tki,\tmj}}  =-\dfrac{(\la_\tmj+\la_\tki)}{(\bbla_\tki+\bbla_\tmj)},
&\quad
\muyy_{\tki,\tmj}&=&-\dfrac{\brg_{\tmj,\tki}}{\brg_{\tki,\tmj}}  =-\dfrac{(\bbla_\tmj+\la_\tki)}{(\bbla_\tki+\la_\tmj)},
\end{array}
\end{split}
\end{align}
and suppose these $\vmu$-parameters obey the skew commutation laws \eqref{equ_mu_commutant_two_new}.
\end{definition}

Clearly, $\rg_{\tki,\tmj}^*$ $=\brg_{\tki,\tmj}{=}\ssi_\tki(\rg_{\tki,\tmj})$,\,
and therefore $\muyy_{\tki,\tmj}{=}(\muxx_{\tki,\tmj})^*$, $\muyx_{\tki,\tmj}{=}(\muxy_{\tki,\tmj})^*$.

\smallskip~

Let $\gS=\gS(\Om^{\bG})$ be a $*,\Si,\times$-stable multiplicative closure of the generating set
\begin{align}\label{equ_Om_prime_diskr_three}
\Om:=\big\{\pm1,\, \la_{21},\, (\la_{21}{+}\la_\tki),\, (\la_{21}{-}\la_\tki),\, (2\la_{21}{-}1) \big\} \subset\La,\quad
\tki\in\{\small\text{{11,\,31,\,32}}\}.
\end{align}
Then $\gS$ is a $\Si$-finite involuted two-parameter multiplicative set, and $\ssi(\rg_{\tki,\tmj})\in\gS$
for any $\tki,\tmj\in\Jp$ and any $\ssi\in\Si$.
Denote by $\LL=\gS\mo\La$ localization of $\La$ with $\gS$.

\vspace{0.2cm}

\begin{definition}\label{def_tt_bt}
We define the set $\ft=\{\tt_\tki\}_{\tki\in\Jq}$ as follows:
\begin{align}\label{equ_def_tt_ort}
\begin{array}{cclcccl}
\tt_{11}&=&{\rg_{{11},21}}/{4}; && \bt_{11}&=&{\brg_{{11},21}}/{4};
\\
\tt_{21}&=&{\rg_{21,11}\rg_{21,31}\rg_{21,32}}/{h_{21}};
&&
\bt_{21}&=&{\brg_{21,11}\brg_{21,31}\brg_{21,32}}/{\bar{h}_{21}}.
\end{array}
\end{align}
\end{definition}
By definition, $\ssi(\tt_\tki),\ssi(\tt_\tki\mo){\in}\LL$ for any $\tki{\in}\Jq$ and any $\ssi{\in}\Si$,
and, besides, $\ssi_\tki(\tt_\tki){=}\bt_\tki$.

A direct verification shows that conditions \eqref{equ_mu_conditions_one}, \eqref{equ_mu_conditions_two}
and \eqref{equ_mu_conditions_three} are satisfied.

\vspace{0.1cm}

We denote by $\fA=\Alg_{2,4}\big(\gS,\bG,\vsigma,\ft,\vmu\big)$ the RTGW algebra with the datum $\big(\gS,\bG,\vsigma,\ft,\vmu\big)$
relative to the monoid $\gM_{\!\cZ}$.

\vspace{0.2cm}

\subsubsection{The subalgebra $\fA_\so\subset\fA$}

\begin{lemma}\label{lem_so_generators}
The elements $\cU_1,\cU_2,\cU_3$ defined below belong to the invariant subalgebra $\fA^{\!\bG}$:
\begin{align}\label{formula_GZ-3-delta-full}
\begin{array}{l}
\cU_1=\ux_{11}+\uy_{11},  \\
\cU_2=\ux_{21}+\uy_{21}+\cC, \\
\cU_3=\vrho^{xx}\ux_{11}\ux_{21} +\vrho^{xy}\ux_{11}\uy_{21} +\vrho^{yx}\uy_{11}\ux_{21} +\vrho^{yy}\uy_{11}\uy_{21}
-{\la_{11}\mo}{\cC}(\ux_{11}-\uy_{11}),
\end{array}
\end{align}
where $\cC\!=\!-i\dfrac{\la_{11}\la_{31}\la_{32}}{\la_{21}(1{-}\la_{21})}$
and $\vrho^{xx}\!=\!1\!-\!\mu^{xx}_{21,11}$,\,
$\vrho^{yx}\!=\!1\!-\!\mu^{xy}_{21,11}$,\,
$\vrho^{xy}\!=\!1\!-\!\mu^{yx}_{21,11}$,\,
$\vrho^{yy}\!=\!1\!-\!\mu^{yy}_{21,11}$.

The commutation relations of the elements $\cU_1,\cU_2,\cU_3$ in $\fA$ are of the form:
\begin{align}\label{equ_ort_bracket_one}
[\cU_1,\cU_2]=\cU_3,\qquad  [\cU_3,\cU_1]=\cU_2,\qquad [\cU_3,\cU_2]=-\cU_1,
\end{align}
where $[\cU_i,\cU_j]=\cU_i\cU_j-\cU_j\cU_i\in\fA$.

Then the subalgebra $\fA'\subset\fA$ generated over $\bC$ by the elements $\cU_1,\cU_2,\cU_3$ belongs to the invariant subalgebra $\fA^{\!\bG}$.
\end{lemma}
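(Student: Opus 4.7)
The plan is to split the verification into two independent parts: invariance under the group $\bG = \bS \times \langle * \rangle$, and the commutator identities \eqref{equ_ort_bracket_one}. The final claim that the generated subalgebra lies in $\fA^{\!\bG}$ will then follow immediately from invariance, since $\fA^{\!\bG}$ is a subalgebra of $\fA$.

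First I would check $\bG$-invariance. The group $\bS = \langle \pi_{31,32}\rangle$ acts only on the pair $\la_{31},\la_{32}$, which enters the $\cU_i$ only through the symmetric product $\la_{31}\la_{32}$ appearing in $\cC$; hence each $\cU_i$ is automatically $\bS$-invariant. For $*$-invariance I would first verify $\cC^* = \cC$ by applying $\imath^* = -\imath$, $\la_{11}^* = -\la_{11}$ (since $\zeta_{11} = 0$), $\la_{31}^* = -\la_{31}$, $\la_{32}^* = -\la_{32}$, $\la_{21}^* = 1-\la_{21}$; three sign changes in the numerator and the swap of $\la_{21}$ with $1-\la_{21}$ in the denominator restore $\cC$. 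For $\cU_3$, I would pair the four summands using the identities $(\vrho^{xx})^* = \vrho^{yy}$ and $(\vrho^{xy})^* = \vrho^{yx}$, which follow from Lemma \ref{lem_mu_equalities_RTGWA}, and check that the correction term $-\la_{11}\mo\cC(\ux_{11}-\uy_{11})$ is self-conjugate using $\la_{11}^* = -\la_{11}$ and $\cC^* = \cC$.

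Next I would compute $[\cU_1,\cU_2]$ directly. For any pair $(\uz_1,\uz_2)$ of generators from $\{\ux_{11},\uy_{11}\}\times\{\ux_{21},\uy_{21}\}$ the skew-commutation \eqref{equ_mu_commutant_two_new} gives $[\uz_1,\uz_2] = (1 - \mu^{**}_{21,11})\,\uz_1\uz_2$ where the decoration $**$ matches the $xy$-type of the second and first generator; these four commutators assemble exactly into the $\vrho$-weighted sum appearing in $\cU_3$. The remaining pieces $[\ux_{11},\cC]$ and $[\uy_{11},\cC]$ are computed from $\ux_{11}\cC = \ssi_{11}(\cC)\ux_{11}$ and $\uy_{11}\cC = \ssi_{11}\mo(\cC)\uy_{11}$; using that $\cC$ is linear in $\la_{11}$ one obtains $\ssi_{11}(\cC) - \cC = -\la_{11}\mo\cC$ and $\ssi_{11}\mo(\cC) - \cC = +\la_{11}\mo\cC$, which produces the correction term of $\cU_3$ with the correct sign.

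The main obstacle is the verification of $[\cU_3,\cU_1] = \cU_2$ and $[\cU_3,\cU_2] = -\cU_1$, since $\cU_3$ is a sum of five terms each involving a product of two generators and a rational coefficient. I would organize the computation by bracketing each of the five summands of $\cU_3$ with each summand of $\cU_1$ (respectively $\cU_2$), reducing every resulting monomial to a multiple of $\ux_{21}$, $\uy_{21}$, $\uy_{11}\ux_{11}$, or $\ux_{11}\uy_{11}$ by collapsing adjacent inverse pairs $\uy_{11}\ux_{11} = \tt_{11}$, $\ux_{11}\uy_{11} = \bt_{11}$ and similarly for index $21$. The rational coefficients then combine using the explicit formulas \eqref{equ_def_mu_ort}--\eqref{equ_def_tt_ort}; the decisive step will be a sequence of cancellations of the form $\vrho^{xx}\ssi_{11}(\tt_{21}) + \vrho^{yy}\ssi_{11}\mo(\bt_{21}) + (\text{correction from }\cC)$ collapsing to $1$ or $\cC$, so that exactly the generator $\cU_1$ or $\cU_2$ is recovered. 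I expect the bookkeeping of shifted $\vrho$'s and the simplification of $h_{21},\bar h_{21}$ to be the genuine technical hurdle; beyond that, once the three bracket identities are established, the closure of $\bC\langle\cU_1,\cU_2,\cU_3\rangle$ in $\fA^{\!\bG}$ is immediate from the already-verified invariance.
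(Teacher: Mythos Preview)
Your plan is correct and follows essentially the same computational route as the paper, but with two organizational differences worth noting. First, you treat $\bG$-invariance and the identity $[\cU_1,\cU_2]=\cU_3$ explicitly; the paper's written proof simply omits these (the invariance is asserted, and the first bracket is tacitly left to the reader), so your version is more complete. Second, for the hard bracket the paper verifies only $[\cU_3,\cU_2]=-\cU_1$ and exploits the already-established $*$-invariance to cut the work in half: writing $\cC'=\la_{11}^{-1}\cC$, it decomposes $[\cU_3,\cU_2]=\Sm_1+\Sm_1^*+\Sm_2+\Sm_2^*+\Sm_3+\Sm_3^*+\Sm_4+\Sm_4^*$, where each $\Sm_i$ collects the pieces proportional to a fixed $\ux_{11}$-monomial and $\Sm_i^*$ the conjugate $\uy_{11}$-monomial. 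The core computation is then the single scalar identity
\[
Q_x=\vrho^{xx}\bigl(\ssi_{11}(\bt_{21})-\tt_{21}\bigr)+\vrho^{xy}\bigl(\ssi_{11}(\tt_{21})-\bt_{21}\bigr)+(\cC')^2=-1,
\]
which yields $\Sm_1=-\ux_{11}$, while $\Sm_2=\Sm_3=\Sm_4=0$ by short direct checks. Your brute-force pairwise bracketing would arrive at the same cancellations but with roughly twice the bookkeeping; incorporating the $*$-pairing shortcut into your plan would streamline it to match the paper's argument exactly.
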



\begin{proof}
Letting $\cC'={\cC}/{\la_{11}}$, we calculate
\begin{gather*}
[\cU_3,\cU_2]=
[\vrho^{xx}\ux_{11}\ux_{21} +\vrho^{xy}\ux_{11}\uy_{21} +\vrho^{yx}\uy_{11}\ux_{21} +\vrho^{yy}\uy_{11}\uy_{21} -{\cC'}(\ux_{11}-\uy_{11}), \ux_{21}+\uy_{21}+\cC]
=\\ =\Sm_1 +\Sm_1^* +\Sm_2 +\Sm_2^* +\Sm_3 +\Sm_3^* +\Sm_4 +\Sm_4^*
\\
\begin{array}{llll}
\text{where}\qquad
&\Sm_1 &=& [\vrho^{xx}\ux_{11}\ux_{21},\uy_{21}] +[\vrho^{xy}\ux_{11}\uy_{21},\ux_{21}] -[{\cC'}\ux_{11},\cC]
\\
&\Sm_2 &=& [\vrho^{xx}\ux_{11}\ux_{21},\ux_{21}] +[\vrho^{xy}\ux_{11}\uy_{21},\uy_{21}],
\\
&\Sm_3 &=& [\vrho^{xx}\ux_{11}\ux_{21},{\cC}] -[{\cC'}\ux_{11},\ux_{21}]
\\
&\Sm_4 &=& [\vrho^{xy}\ux_{11}\uy_{21},{\cC}] -[{\cC'}\ux_{11},\uy_{21}].
\end{array}
\end{gather*}

We obtain $\Sm_1=Q_x\cdot\ux_{11}=-\ux_{11}$ because
\begin{gather*}
\begin{array}{ll}
Q_x & = \vrho^{xx} \ssi_{11}(\bt_{21}) -\ssi\mo_{21}(\vrho^{xx})\ssi\mo_{21}(\muxx_{11,21})\tt_{21}
+\\
& +\vrho^{xy}\ssi_{11}(\tt_{21})-\ssi_{21}(\vrho^{xy})\ssi_{21}(\muxy_{11,21})\bt_{21} -{\cC'}\ssi_{11}(\cC)+{\cC'}{\cC}
\\
& = \vrho^{xx} \left( \ssi_{11}(\bt_{21}) -\tt_{21}\right) +\vrho^{xy}\left(\ssi_{11}(\tt_{21})-\bt_{21}\right)  +(\cC')^2
\\
& = \left( \vrho^{xx}\ssi_{11}(\bt_{21})-\vrho^{xy}\bt_{21} \right) +\left(\vrho^{xy}\ssi_{11}(\tt_{21})-\vrho^{xx}\tt_{21}\right) +(\cC')^2
\\
& = \dfrac{\bt_{21}}{\brg_{21,11}}\ssi_{21}(\la_{21}{-}\bbla_{21})  +\dfrac{\tt_{21}}{\rg_{21,11}}\ssi\mo_{21}(\bbla_{21}{-}\la_{21}) +(\cC')^2=-1.
\end{array}
\end{gather*}

Similarly we get
$Q_y{=}Q_x^*{=}\dfrac{\tt_{21}}{\rg_{21,11}}\ssi\mo_{21}(\bbla_{21}{-}\la_{21})  {+}\dfrac{\bt_{21}}{\brg_{21,11}}\ssi_{21}(\la_{21}{-}\bbla_{21}) {+}(\cC')^2{=}-1.$

Next, $\Sm_2=0$ since
$[\vrho^{xx}\ux_{11}\ux_{21},\ux_{21}]=\left(\vrho^{xx}{-}\ssi_{21}(\vrho^{xx})\muxx_{21,11}\right)\,\ux_{11}\ux^2_{21}=0$
and $[\vrho^{xy}\ux_{11}\uy_{21},\uy_{21}]=0$, similarly.
Finally we have
\begin{gather*}
\Sm_3\!=\![\vrho^{xx}\ux_{11}\ux_{21},{\cC}] -[{\cC'}\ux_{11},\ux_{21}]\!=\!
\left(\vrho^{xx}\ssi_{11}\ssi_{21}(\cC)-\vrho^{xx}\cC -{\cC'}+\ssi_{21}(\cC')\muxx_{21,11}\right) \ux_{11}\ux_{21}=0,
\\
\Sm_4=[\vrho^{xy}\ux_{11}\uy_{21},{\cC}]{-}[{\cC'}\ux_{11},\uy_{21}]
=\big(\vrho^{xy}\ssi_{11}\ssi\mo_{21}(\cC)-\vrho^{xy}{\cC}
-{\cC'}+\ssi\mo_{21}(\cC')\muyx_{21,11}\big)\ux_{11}\uy_{21}=0,
\end{gather*}
which implies $[\cU_3,\cU_2]=-\cU_1$.
\vspace{-0.4cm}
\end{proof}

\vspace{0.1cm}

\begin{lemma}\label{lem_invariant_subring_so}
Let $\so(3;\bC)$ be a special orthogonal Lie algebra standardly presented by the $3{\times}3$ skew-symmetric matrices
$L_x=E_{32}-E_{23}$, $L_y=E_{13}-E_{31}$, $L_z=E_{21}-E_{12}$ with Lie bracket.

The correspondence of the generators
$$L_x\longrightarrow\cU_1,\quad L_y\longrightarrow\cU_1,\quad L_z\longrightarrow\cU_1$$
establishes an isomorphism between the universal enveloping $\op{U}(\so(3;\bC))$ of the special orthogonal Lie algebra $\so(3;\bC)$
and the RTGW algebra $\fA_{\so}=\fA'\subset\fA$.
\end{lemma}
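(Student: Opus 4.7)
The plan is to observe that Lemma \ref{lem_so_generators} has already done the hardest computational work: the commutation relations
$[\cU_1,\cU_2]=\cU_3$, $[\cU_3,\cU_1]=\cU_2$, $[\cU_3,\cU_2]=-\cU_1$ in $\fA$ coincide, under the correspondence $L_x\mapsto\cU_1$, $L_y\mapsto\cU_2$, $L_z\mapsto\cU_3$, with the standard $\so(3;\bC)$ relations $[L_x,L_y]=L_z$, $[L_y,L_z]=L_x$, $[L_z,L_x]=L_y$ (an immediate computation with the matrix units). Hence the prescription on generators extends $\bC$-linearly to a Lie algebra homomorphism $\varphi:\so(3;\bC)\longrightarrow\fA$, where $\fA$ is regarded with its commutator bracket, and by the universal property it lifts uniquely to an associative $\bC$-algebra homomorphism $\tilde\varphi:\op{U}(\so(3;\bC))\to\fA$. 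The image of $\tilde\varphi$ is by definition the subalgebra $\fA'=\fA_\so$ generated by $\cU_1,\cU_2,\cU_3$, which gives surjectivity onto $\fA_\so$.

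Next I would check that $\varphi$ itself is injective on $\so(3;\bC)$. Expanding $\cU_1$, $\cU_2$, $\cU_3$ in the $\LL\bgM$-basis provided by Corollary \ref{cor_elements_of_A} and using the $\bZ^2$-grading from Proposition \ref{prop_RTGWA_crossprod}, one sees directly that $\cU_1,\cU_2,\cU_3$ are $\bC$-linearly independent; since $\so(3;\bC)$ is simple, $\ker\varphi$ is either $0$ or the whole algebra, and the former must hold.

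The substantive step is promoting injectivity from $\so(3;\bC)$ to its full enveloping algebra. My preferred route is a PBW comparison: the ordered monomials $\{L_x^a L_y^b L_z^c\}_{a,b,c\geq 0}$ form a $\bC$-basis of $\op{U}(\so(3;\bC))$, and I would show that their images $\{\cU_1^a\cU_2^b\cU_3^c\}$ are $\bC$-linearly independent in $\fA$. The essential tool is the $\bZ^2$-grading: for each monomial, track the extremal bi-degrees occurring in its $\LL\bgM$-expansion. Because $\fA$ is an Ore domain (Proposition \ref{prop_RTGWA_crossprod}), these leading-support comparisons are unambiguous, and one can read off that distinct triples $(a,b,c)$ contribute distinct leading supports, forcing any linear dependence to be trivial.

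The main obstacle will be precisely this PBW step. Unlike in the unitary case, the element $\cU_3$ carries the inhomogeneous correction $-\la_{11}^{-1}\cC(\ux_{11}-\uy_{11})$, so a bare leading-bimonomial argument must be replaced by a careful induction on the total filtration degree, keeping track of the four $(\pm,\pm)$-signed components of $\cU_3$ in the $\bZ^2$-grading. A clean alternative, which I would fall back on if the combinatorics become unwieldy, is a central-character argument: compute $\tilde\varphi(C)$ for the Casimir $C$ of $\op{U}(\so(3;\bC))$, following the pattern of Lemma \ref{lemmma_Casimir_formulae}, and verify that the result is a non-scalar element of the center of $\fA_\so$. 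Since every non-zero two-sided ideal of $\op{U}(\so(3;\bC))$ meets the center $\bC[C]$ non-trivially, a non-scalar image of $C$ forces $\ker\tilde\varphi=0$, completing the proof of the claimed isomorphism $\op{U}(\so(3;\bC))\cong\fA_\so$.
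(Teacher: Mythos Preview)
Your proposal follows the paper's argument on the essential point: the paper's entire proof reads ``It is a consequence of the relations \eqref{equ_ort_bracket_one} since an algebra $\so(3;\bC)$ is simple.'' That is, it appeals to Lemma~\ref{lem_so_generators} for the bracket identities and to simplicity of $\so(3;\bC)$, exactly as in your steps (1)--(4).

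Where you diverge is that you take seriously the passage from injectivity on $\so(3;\bC)$ to injectivity on $\op{U}(\so(3;\bC))$, which the paper does not address at all (the same gap appears in the parallel Lemma~\ref{lem_Eij_Aij_correspond} for $\sl(3,\bC)$, where the paper even writes ``simplicity of the algebra $\op{U}(\sl(3,\bC))$''). Your two proposed routes---a PBW/leading-support argument in the $\bZ^2$-graded domain $\fA$, or a Casimir/central-character argument---are both reasonable and either would close the gap. The Casimir route is the cleaner one here: since $\so(3;\bC)\cong\sl_2(\bC)$, it is standard that every non-zero two-sided ideal of $\op{U}(\sl_2)$ meets the center $\bC[C]$ non-trivially, so it suffices to check that $\tilde\varphi(C)$ is transcendental over $\bC$ (a direct computation in $\LL$). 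In short, your proof is strictly more complete than the paper's; the paper treats the enveloping-algebra injectivity as automatic once the Lie-algebra map is non-zero, which, as you correctly note, it is not.
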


It is a consequence of the relations \eqref{equ_ort_bracket_one} since an algebra $\so(3;\bC)$ is simple.
Theorem \ref{the_so_RTGWA} obviously follows from Lemma \ref{lem_invariant_subring_so}.

\section{Acknowledgments}
J. S. is supported by the FAPESP (2018/18146-5).

\vspace{0.3cm}

\smallskip~

\noindent
N. Golovashchuk: Department of Algebra and Mathematical Logic,
\\
Faculty of Mechanics and Mathematics, Taras Shevchenko National University of Kyiv,
\\
64, Volodymyrska street, 01033 Kyiv, Ukraine

E-mail address: golova@univ.kiev.ua, golovash@gmail.com

Instituto de Matem\'{a}tica e Estat\'{\i}stica,  Universidade  de  S\~{a}o  Paulo, S\~{a}o Paulo SP, Brasil

E-mail address:jfschwarz.0791@gmail.com

\end{document}